\newcommand{\conv}{\underset{i\to \infty}{\longrightarrow}}
\newcommand{\Hc}{\mathcal{H}}
\newcommand{\Cc}{\mathcal{C}}
\newcommand{\umc}{\mathcal{UMC}}
\newcommand{\bd}{\mathrm{Bd}}
\newcommand{\itr}{\mathrm{int}}
\newcommand{\ol}{\overline}
\newcommand{\uml}{\mathrm{UML}}
\newcommand{\qml}{\mathrm{QML}}
\newcommand{\plane}{\mathbb{C}}
\def\int{\mathbb{Z}}
\newcommand{\disk}{\mathbb{D}}
\newcommand{\complex}{\plane}
\newcommand{\ucirc}{\mathbb{S}^1}
\newcommand{\uc}{\mathbb{S}^1}
\newcommand{\0}{\emptyset}
\newtheorem{thm}{Theorem}[section]
\newtheorem{lem}[thm]{Lemma}
\newcommand{\Aa}{\mathcal{A}}
\newcommand{\Ii}{\mathcal{I}}
\newtheorem{cor}[thm]{Corollary}
\theoremstyle{definition}
\newtheorem{defn}[thm]{Definition}
\newcommand{\udisk}{\mathbb{D}}
\def\R{\mathbb{R}}
\newcommand{\oc}{\ol{c}}
\newcommand{\oy}{\ol{y}}
\newcommand{\C}{\mathbb{C}}
\newcommand{\cdisk}{\ol{\mathbb{D}}}
\newcommand{\la}{\lambda}
\newcommand{\sm}{\setminus}
\newcommand{\A}{\mathcal{A}}
\newcommand{\lam}{\mathcal{L}}
\newcommand{\ch}{\mathrm{CH}}
\newcommand{\si}{\sigma}
\renewcommand\le{\leqslant}
\renewcommand\ge{\geqslant}
\def\0{\varnothing}
\begin{document} \title {Unicritical Laminations}

\author[S.~Bhattacharya]{Sourav~Bhattacharya}

\author[A.~Blokh]{Alexander~Blokh}

\author[D.~Schleicher]{Dierk~Schleicher}

\address[Sourav~Bhattacharya and Alexander~Blokh]
{Department of Mathematics\\ University of Alabama at Birmingham\\
Birmingham, AL 35294-1170}

\address[Dierk~Schleicher]{Aix-Marseille Universit\'e, Institut de Math\'ematiques de Marseille, 163 Avenue de Luminy Case 901, 13009
Marseille, France}

\email[Alexander~Blokh]{ablokh@uab.edu}

\email[Sourav~Bhattacharya]{sourav@uab.edu}

\email[Dierk~Schleicher]{dierk.schleicher@univ-amu.fr}

\subjclass[2010]{Primary: 54F20; Secondary: 30C35}

\keywords{complex dynamics, circle dynamics, laminations}

\date{January 18, 2021}
 	
\begin{abstract}
Thurston introduced \emph{invariant (quadratic) laminations} in his
1984 preprint as a vehicle for understanding the connected Julia sets
and the parameter space of quadratic polynomials. Important ingredients
of his analysis of the angle doubling map $\sigma_2$ on the unit circle
$\uc$ were the Central Strip Lemma, non-existence of wandering
polygons, the transitivity of the first return map on vertices of
periodic polygons, and the non-crossing of minors of quadratic
invariant laminations. We use Thurston's methods to prove similar
results for \emph{unicritical} laminations of arbitrary degree $d$ and
to show that the set of so-called \emph{minors} of unicritical
laminations themselves form a \emph{Unicritical Minor Lamination}
$\uml_d$. In the end we verify the \emph{Fatou conjecture} for the
unicritical laminations and extend the \emph{Lavaurs algorithm} onto
$\uml_d$.
\end{abstract}

\maketitle

	\section*{Introduction} 	

\everypar{\looseness=-1}

One of the most satisfactory strengths of polynomial dynamics is its successful interplay with symbolic
dynamics: even though the Julia sets and the Mandelbrot set have very complicated topological structure,
there are very simple combinatorial models that describe the original sets, often up to homeomorphism.

In the ``early days of modern holomorphic dynamics'', in the mid-1980's, Bill Thurston developed his theory
of quadratic minors for connected Julia sets of quadratic polynomials, and for their parameter space, the
\emph{Mandelbrot set} \cite{thu85}: he introduced the concept of \emph{invariant quadratic laminations} as
subsets of the closed unit disk and described their properties; each of these models a Julia set in the sense
that a natural quotient, called \emph{pinched disk} by Douady \cite{dou93}, is homeomorphic to the (filled-in)
Julia set if and only if the latter is locally connected; see also \cite{sch09}. Thurston showed that invariant
quadratic lamination is characterized by a unique leaf, called its \emph{minor leaf} (a set of two or possibly
one angles), and each angle is part of a unique minor leaf. Turning to parameter space, Thurston showed that
the union of all minor leaves forms another lamination, the \emph{quadratic minor lamination} $\qml$, that
models the Mandelbrot set via its own pinched disk, and is homeomorphic to it exactly when the Mandelbrot
set is locally connected, which continues to form a major conjecture in the field. Douady, Hubbard, and
Lavaurs showed that $\qml$ and hence the Mandelbrot set can be constructed by a very simple algorithm,
known as the \emph{Lavaurs algorithm}, that inductively constructs all periodic minor leaves in the
order of their period, and so that the closure of all periodic leaves is all of $\qml$.

Thurston already started the question to extend his theory of quadratic minor laminations to
higher degrees. For arbitrary degrees $d\ge 3$, this is a substantial and difficult task. The
first question that Thurston viewed as fundamental, whether ``wandering triangles'' exist,
was resolved in the affirmative in \cite{bo08}; the non-existence of wandering triangles is
one of the key lemmas in his quadratic theory (see Theorem II.5.2 from \cite{thu85}), so this
creates a substantial difficulty. A second difficulty lies in the fact that parameter space of
degree $d$ polynomials has complex dimension $d-1$, so the one-dimensional theory of parameter
spaces fails fundamentally. Even the combinatorial description of cubic polynomials is most
difficult; see for instance \cite{mil08} and \cite{KafflsThesis}. Thurston's work on general
laminations of degree $d$ was recently published in \cite{thu19}

\looseness=-1 In this manuscript, we focus on a particular class of
degree $d$ laminations called \emph{unicritical}: unicritical
polynomials can always be pa\-ra\-me\-terized as $z^d+c$, so they have
a one-dimensional parameter space; their connectedness locus is often
called the \emph{Multibrot set}, and it is well known to have profound
similarities to the Mandelbrot set (see for instance \cite{sch04}).
Fundamental properties of unicritical laminations have been known for a
long time, in particular the non-existence of wandering triangles
\cite{lev98, sch00}.

We develop unicritical laminations for degree $d$ in analogy to Thurston's quadratic laminations.
While it has been expected frequently that many of his original methods can be made to work in this
setting, we are not aware of a systematic treatment that describes unicritical laminations in detail.
Our main results contain the following:

\noindent (1) we describe unicritical laminations; in particular, we show that each of its gaps is periodic, and
its boundary either consists of finitely many leaves (modeling finitely many rays landing at a periodic
point in the Julia set), or it is a Cantor set of leaves that returns back to itself either with degree
$1$ or degree $d$ (modeling a Siegel disk or a Cremer point, respectively an attracting periodic orbit);

\noindent (2) we show that each unicritical lamination has a unique minor leaf, which may or may not be degenerate, that
every angle occurs as the minor leaf of a unicritical lamination, and the union of all such minor leaves of
given degree  $d\ge 2$ forms another lamination, called the unicritical minor lamination (Theorem~\ref{t:umlq})
associated with an equivalence relation on the unit circle;

\noindent (3) in the unicritical minor lamination, the boundary of each gap is either finite (modeling finitely many preperiodic
parameter rays landing at a common point), or it is a Cantor set (modeling a hyperbolic component);

\noindent (4) each unicritical minor lamination is the closure of its periodic leaves (all of which are non-degenerate),
and these leaves can be constructed recursively by increasing periods by a simple algorithm analogous to
Lavaurs' algorithm (Theorem~\ref{Lavaurs:lemma} and Section \ref{Sec:Lavaurs}).

Section 1 contains
basic definitions and results. In Section
2 we study \emph{unicritical $\si_d$-invariant laminations} and show
that the minors of \emph{unicritical laminations}
form a \emph{lamination} called \emph{Unicritical Minor
Lamination of  degree $d$} and denote it by $\uml_d$. In Section 3 we
study the basic properties of $\uml_d$. In Section 4 we prove that $\uml_d$ is a
\emph{q-lamination}. In Section 5, we prove additional facts that
help us to devise an algorithm to construct $\uml_d$ similar to
the \emph{Lavaurs algorithm} \cite{lav89} for the quadratic
case (see Section 6).

	\section{Preliminaries} \label{Sec:Prelim}
	
	\subsection{Laminational equivalence relations}
	
	Let $ \hat{\complex} $ be the Riemann sphere. For a compactum
	$X\subset \complex $, let $U^{\infty}(X)$ be the component of
	$\hat{\complex} \backslash X $ containing $\infty$. For $X$ connected,
	let $\Psi_{X}: \hat{\complex} \backslash
	\overline{\disk} \rightarrow U^{\infty}(X)$ be a Riemann map with
	$\Psi_{X}(\infty)=\infty$, and $\Psi_{X}(z)$ tending to a positive real
	limit as $z\rightarrow \infty$.
	Let $P$ be a monic polynomial of degree $d\geq 2$.
Consider the Julia set $J_P$ and the filled
	Julia set $K_P$ of $P$. Set $\theta_d$ be the map $z^d|_{U^\infty(\cdisk)}$. If $J_P$ is connected,
	$\Psi \circ \theta_d = P \circ \Psi$ \cite{dh8485, mil00}. If
	$J_P$ is locally connected, $\Psi $ extends to a continuous
	map $\overline{\Psi}:\hat{\complex} \backslash  \disk \rightarrow
	\overline{\hat{\complex} \backslash K_P }$, and $\overline{\Psi} \circ
	\theta_d = P \circ \ol{\Psi} $ on $\C\sm \disk$.
	A continuous surjection $\overline{\Psi}|_{\uc}=\psi:
\uc
	\rightarrow J_P$ is called the \emph{Caratheodory loop}. We will write $\sigma_d$ for $\theta_d|_{\uc}$. Define an
	equivalence relation $\sim_P$ on $\uc$ by $x \sim_P y$ iff $\psi(x)=\psi(y)$ and call $\sim_P$ the \emph{laminational
		equivalence relation (generated to the polynomial $P$)}.
    The
	map $\psi$ semi-conjugates $\sigma_d$ and $P|_{J(P)}$, and
	$\sim_P$ is invariant; $\sim_P$-classes have
	pairwise disjoint convex hulls. The quotient space $\uc /\sim_P=J_{\sim P}$ is called the \emph{topological Julia set}.
	Clearly, $J_{\sim P}$ is homeomorphic to $J_P$. The map $f_{\sim P} :
	J_{\sim P}\rightarrow J_{\sim P}$, induced by $\sigma_d$, called the
	\emph{topological polynomial}, is
	topologically conjugate to $P|_{J_P}$.
	Define an equivalence relation $\sim $ on $\uc$ similar
	to $\sim_P$ but with no references to polynomials.
	
	\begin{defn}\label{Lam:equivalence}
		An equivalence relation $\sim$ on the unit circle $\uc$ is said
		to  be \emph{laminational} if :
		\begin{enumerate}
			\item the graph of $\sim$ is a closed subset of $ \uc
\times \uc $;
			\item convex hulls of disjoint equivalence classes are disjoint;
			\item each equivalence class of $\sim$ is finite.
		\end{enumerate}
The equivalence relation $\sim$ is ($\sigma_d$)-\emph{invariant} if:
		
		\begin{enumerate}
			
			\item $\sim$ is \emph{forward invariant}: for a
$\sim$-class
			$[g]$, the set $\sigma_d([g])$ is a $\sim$-class too;
			
			\item for any $\sim$-class $[g]$, the map $ \tau =
			\sigma_d|_{[g]}$ extends to $\uc$ as an  orientation
preserving
			covering map $\hat{\tau}$ such that $[g]$ is the full preimage of
			$\tau([g])$ under the covering map  $\hat{\tau}$.
		\end{enumerate}
	\end{defn}

Assume that
$\sim$ is a $\sigma_d$-invariant laminational equivalence relation.
Consider the \emph{topological Julia set}
	$\uc/\sim = J_{\sim}$,  and the \emph{topological polynomial}
	$f_{\sim}: J_{\sim} \rightarrow J_{\sim}$ induced by $\sigma_d$. We can use Moore's Theorem to embed
	$J_{\sim}$ into $\complex$ and to extend the quotient map $
	\psi_{\sim} : \uc \to J_{\sim}$ to a map $ \psi_{\sim} : \complex
	\to \complex $ with the only non-degenerate \emph{fibers} (point
preimages) being the convex
	hulls of non-degenerate $\sim$-classes. A \emph{Fatou domain} of
	$J_{\sim}$ ($f_{\sim}$) is a component of
	$\complex \backslash J_{\sim} $. If $U$ is a periodic \emph{Fatou
		domain} of $f_{\sim}$ of period $n$, $ f^n_{\sim}|_{\bd(U)}$ is
	conjugate to an irrational rotation of $\uc$ (then $U$ is
called a \emph{Siegel domain}) or to
    $\sigma_k$
	for some $k>1$ \cite{bl02}. The complement $K_{\sim}$ of $U(J_{\sim})$ is the
	\emph{filled topological Julia set}, i.e. the union of $J_{\sim}$ and its bounded
	\emph{Fatou domains}. If $\sim$
	is fixed, we omit it from the notation. We
	consider $f_{\sim}$ as a self-mapping of $J_{\sim}$.

	\begin{defn}\label{leaf}
	For a $\sim$-class $A$, call an edge $\overline{ab}$ of $\ch(A)$($\ch$ denotes \emph{convex hull}) a
		\emph{($\sim$-)leaf}. All points of $\uc$ are called
		\emph{degenerate} $\sim$-leaves.
	\end{defn}
	
	The limit of a converging sequence  of $\sim$-leaves
	$\sim$ is a $\sim$-leaf (the family of all
	$\sim$-leaves is closed); the union of all $\sim$-leaves is a continuum. For $X\subset \cdisk $ with the
	property $X =\ch(X\cap \uc)$, set
$\sigma_d(X)=\ch(\sigma_d(X
	\cap \uc))$; then for any leaf $\ell$ of $\sim$, the set
	$\sigma_d(\ell)$ is a leaf.
 	
	\begin{defn}\label{d:q-lamination}
		The set $\mathcal{L}_{\sim}$ of all leaves of $\sim$ is called the
		\emph{lamination generated by} $\sim$. A lamination generated
by some laminational equivalence relation is called a
\emph{q-lamination}.
	\end{defn}

	\subsection{Laminations as defined by Thurston}\cite{thu85}
By a \emph{prelamination} we mean a collection $\mathcal{L}$ of pairwise unlinked chords
	in $\cdisk$, called \emph{leaves}. For a collection $\mathcal{R}$ of sets, we denote the union of all sets from $\mathcal{R}$ by $\mathcal{R}^{+}$. If all points of the
	circle are elements of $\mathcal{L}$ (seen as \emph{degenerate leaves})	and $\lam^+$ is closed in $\complex$, then we	call $\mathcal{L}$ a \emph{lamination}. Hence, a lamination can be obtained
	by closing a \emph{prelamination} and adding all points of $\uc$ viewed as degenerate leaves. If $ \ell=\overline{ab}$ is a leaf,let
	$\sigma_d(\ell)$ be the chord with endpoints $\sigma_d(a)$ and
	$\sigma_d(b)$. If $\sigma_d(a)=\sigma_d(b)$, call $\ell$ a
	\emph{critical leaf} and $\sigma_d(a)$ a \emph{critical value}. If
	$\sigma_d^* : \mathcal{L}^+ \to \overline{\disk}$ is the linear
	extension of $\sigma_d$ over all the leaves in $\mathcal{L}$, then
$\sigma_d^*$ is continuous. Also, $\sigma_d$ is
	locally one-to-one on $\uc$, and $\sigma_d^*$ is one-to-one on any
	given non-critical leaf. If $\mathcal{L}$ is a lamination, then
	$\mathcal{L}^+$ is a continuum.

We call the closure of a component of $\disk - \mathcal{L}^{+}$ a \emph{gap}. For a gap $G$, call $G\cap \uc$ the \emph{basis} of $G$ and the points of $G\cap \uc$ \emph{vertices} of
$G$. A gap $G$ is \emph{infinite} if and only if $G\cap \uc$ is infinite.

	\begin{defn}\label{relation:lamination}
		Let $\mathcal{L}$ be a lamination. The equivalence relation
		$\approx_{\mathcal{L}}$  on $\uc$ defined by declaring that $x
		\approx_{\mathcal{L}}$ if and only if there exists a finite
concatenation of leaves
		of $\mathcal{L}$ joining $x$ and $y$ will be called the
\emph{equivalence
		relation generated by $\mathcal{L}$}.
	\end{defn}

Recall that q-laminations were defined above in Definition
\ref{d:q-lamination}. Equivalently, \emph{a lamination $\mathcal{L}$ is a
q-lamination if and only if the equivalence relation
$\approx_{\mathcal{L}}$ is an invariant laminational equivalence
relation as defined in Definition $\ref{Lam:equivalence}$ and
$\mathcal{L}$ consists exactly of the edges of the convex hulls of all
$\approx_{\mathcal{L}}$-classes together with all points of $\uc$.}

A lamination $\mathcal{L}$ is \emph{Thurston $\si_d$-invariant} if it is (1) \emph{forward $\si_d$-invariant}: for any leaf
$\ell=\overline{ab} \in \mathcal{L}$,  either $\sigma_d(a)=\sigma_d(b)$ or $\sigma_d(\ell)=\overline{\sigma_d(a) \sigma_d(b)}
\in \mathcal{L}$, (2) \emph{backward $\si_d$-invariant}: for any leaf
$\ell=\overline{ab} \in \mathcal{L}$ there is a collection of $d$ disjoint leaves in $\mathcal{L}$ each joining a preimage of $a$ to a
preimage of $b$,
(3) \emph{gap $\si_d$-invariant}:  for any gap $G$ in $\mathcal{L}$, the convex hull $H$ of $\sigma_d(G\cap \uc)$ is a gap of $\lam$, a leaf of $\lam$, or a point of $\uc$. If $H$ is a gap, the
map $\sigma_d^*|_{\bd(G)}: \bd(G) \to \bd(H)$ is the
composition of a monotone and a covering map to
$\bd(H)$ with positive orientation, and
the image of a point moving in the positive direction
around $\bd(G)$ moves (non-strictly) in the positive direction around $\bd(H)$.

A gap $G$ of a $\si_d$-invariant lamination is called $\sigma_d$ -\emph{critical}
if for each  $y \in \sigma_d(G)$ the set $\sigma_d^{-1}(y) \cap G$ consists of at least 2 points. A set of
 critical leaves $\mathcal C=\{\oc_i\}_{i=1}^{d-1}$ is \emph{full} if $\mathcal C^+$ contains no polygons.
Theorem \ref{prop:sat} shows how to generate \emph{Thurston invariant laminations:}

\begin{thm} \cite{thu85}\label{prop:sat}
	Let $\lam$ be a lamination without critical gaps which satisfies
    the following conditions:
		
		\begin{enumerate}
			
			\item $\lam$ is forward $\si_d$ invariant and

			\item there is a full collection of critical leaves in
$\mathcal{L}$.
			
		\end{enumerate}
		
		Then there exists a $\si_d$-invariant lamination $S(\lam)$
        which contains $\lam$ and is obtained
		by taking pullbacks of the leaves in $\lam$.
	\end{thm}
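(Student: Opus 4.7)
The plan is to construct $S(\lam)$ by iterated pullback. Set $\lam_0=\lam$ and inductively define $\lam_{n+1}=\lam_n\cup\si_d^{-1}(\lam_n)$, where by $\si_d^{-1}(\lam_n)$ I mean the collection of all chords $\ol{xy}\subset\cdisk$ with $\ol{\si_d(x)\si_d(y)}\in\lam_n$. Then I set $S(\lam)$ to be the closure of $\bigcup_n\lam_n^+$ in $\cdisk$ together with the degenerate leaves of $\uc$.

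The key tool is the full collection $\mc C=\{\oc_1,\dots,\oc_{d-1}\}$, which by hypothesis lies in $\lam_0$ and by fullness contains no polygon. Hence $\mc C^+$ divides $\cdisk$ into exactly $d$ closed regions $Q_1,\dots,Q_d$, and $\si_d$ restricted to each $Q_i\cap\uc$ is a bijection onto $\uc$. Consequently every leaf $\ell=\ol{ab}\in\lam_n$ lifts to exactly one chord $\tilde\ell_i\subset Q_i$ per $i$, and these $d$ lifts are pairwise unlinked because the $Q_i$ have disjoint interiors. To complete the inductive step I must check that no $\tilde\ell_i$ crosses any leaf already in $\lam_n$. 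Assume $\tilde\ell_i$ crosses $m\in\lam_n$. Since $m$ cannot cross any critical leaf (those lie in $\lam_n$), $m$ lies entirely in some $Q_j$. If $i\ne j$, the interiors of $Q_i,Q_j$ are disjoint, contradiction. If $i=j$, then $\si_d^*$ is injective on $Q_i\cap\cdisk$, so the crossing of $\tilde\ell_i$ and $m$ forces $\ell$ and $\si_d(m)$ to cross, which contradicts the forward invariance of $\lam_n$ together with the fact that $\lam_n$ is a lamination. Hence each $\lam_n$ is a lamination, and its pointwise limit $S(\lam)$ is a lamination too, since limits of unlinked chords remain unlinked.

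Forward $\si_d$-invariance of $S(\lam)$ is immediate: each leaf of $\lam_{n+1}$ maps to a leaf of $\lam_n\subset S(\lam)$, and the property passes to the closure by continuity of $\si_d^*$. Backward invariance follows from the construction: every leaf of $\lam_n$ has $d$ disjoint preimages in $\lam_{n+1}$ by design, and the same holds for leaves in the closure by applying compactness of each $Q_i$ to convergent subsequences of the preimages. For gap invariance one considers a gap $G$ of $S(\lam)$; by construction no critical leaf can lie in the interior of $G$, and the no-critical-gap hypothesis on $\lam$ propagates through each pullback step, so $G$ itself is not critical. One then analyses $\si_d^*|_{\bd(G)}$ region by region $Q_i$ and shows the image is a gap, a leaf, or a point, with the required monotone-then-covering factorisation.

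I expect gap invariance in the limit to be the main obstacle. A gap of $S(\lam)$ need not appear in any individual $\lam_n$; it can arise as the component cut out by a sequence of pullback leaves that converge to its boundary. The technical work is to show that the absence of critical gaps in $\lam$ together with the fullness of $\mc C$ rules out a gap of $S(\lam)$ from covering its image with ``too high'' a degree, and that the induced map $\si_d^*|_{\bd(G)}\to\bd(\si_d^*(G))$ respects cyclic order. This is a degree-counting argument tracking which of the $d$ regions $Q_i$ meet $G$, combined with a continuity/approximation argument on the approximating gaps of the $\lam_n$.
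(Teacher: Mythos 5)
The paper does not prove Theorem~\ref{prop:sat} at all: it is stated with the citation \cite{thu85} and treated as a black box (Thurston's pullback construction), so there is no internal argument to compare your proposal against. On its own merits, your construction is the right one --- indeed the theorem statement itself says $S(\lam)$ is ``obtained by taking pullbacks,'' and your decomposition of $\cdisk$ into $d$ closed regions $Q_1,\dots,Q_d$ by the full collection $\mc{C}$ is exactly the mechanism that makes the pullback well-defined. The inductive unlinking argument (two lifts in different $Q_i$ cannot cross because the regions have disjoint interiors; two lifts in the same $Q_i$ cannot cross because $\si_d$ is essentially injective there, so a crossing would push forward to a crossing in $\lam_n$) is correct, and so are the passages to the closure for forward and backward invariance.

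The genuine gap is exactly where you flag it: gap invariance is not actually proved, only described as ``the main obstacle'' with a vague plan. The theorem, as placed in the subsection ``Laminations as defined by Thurston'' and glossed there as producing ``Thurston invariant laminations,'' requires condition~(3) of Thurston invariance, namely that for every gap $G$ of $S(\lam)$ the convex hull of $\si_d(G\cap\uc)$ is a gap, leaf, or point, and that $\si_d^*|_{\bd G}$ factors as monotone followed by an orientation-preserving covering. Your sketch says ``no critical leaf can lie in the interior of $G$, so $G$ is not critical,'' but this overreaches: a gap $G\subset Q_i$ can still be critical when two critical leaves of $\mc{C}$ share endpoints on $\bd G$, in which case $\si_d$ fails to be injective on $Q_i\cap\uc$ and $\si_d^*|_{\bd G}$ can have degree greater than one. (The simplest instance: $d=3$, $\mc{C}=\{\ol{0\,\tfrac13},\,\ol{\tfrac13\,\tfrac23}\}$, where the central region has the three points $0,\tfrac13,\tfrac23$ all mapping to $0$.) This is precisely why the hypothesis ``$\lam$ has no critical gaps'' is present and must be invoked; your plan never uses it. One must argue that, because $\lam$ already contains enough leaves that no gap of $\lam$ maps with degree $\ge 2$, the pullback process cannot create a bad gap --- and then verify the monotone-plus-covering factorization on whatever critical gaps do appear in $S(\lam)$. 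None of this is in the proposal.

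A remark that may simplify your life, and that is in the spirit of how this paper actually works: the authors immediately switch to \emph{sibling} $\si_d$-invariance (Section~\ref{sibling:invariant}) and quote \cite{bmov13} that sibling invariance implies Thurston invariance and is a closed condition. Your construction delivers the sibling property almost for free --- the $d$ pullbacks of any non-degenerate leaf are distributed one per region $Q_i$ and are pairwise unlinked --- so if you verify the three sibling conditions for each $\lam_n$ and pass to the closed limit via Theorem~\ref{t:space}, you get gap invariance as a corollary of \cite{bmov13} rather than having to prove it by hand. That route avoids the degree-counting on gaps entirely, which is the part your proposal leaves open.
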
 	

\begin{defn}\label{d:sat} The lamination $S(\lam)$ is called the \emph{saturation} of
$\lam$.
\end{defn}

	\subsection{Classification of gaps}
	\begin{defn}[Periodic and (pre)periodic gaps]\label{Periodic:gaps}
Let $G$ be a gap of a $d$-invariant lamination $\mathcal{L}$. $G$ is
		\emph{(pre)periodic} if
$\sigma_d^{m+k}(G)=\sigma_d^{m}(G)$ for
		some $m \geq 0 $, $k >0 $; if $m$,$k$ are chosen to be minimal, then
		$G$ is said to be \emph{preperiodic} if $m>0$ or periodic (of
\emph{period} $k$) if
		$m=0$. If the period of $G$ is 1, then $G$ is said to be
\emph{invariant.}
		Define \emph{pre-critical} and \emph{(pre)critical} objects
similarly to
		(pre)periodic objects defined above. 	
	\end{defn}
	
	Consider infinite periodic gaps of $\sigma_d$-invariant
laminations.
	By \cite{kiw02}, infinite gaps are eventually mapped onto
	periodic infinite gaps. We state (without a proof)
	folklore results about (pre)periodic (in particular, infinite)
	gaps (for q-laminations see \cite{bl02}, see also \cite{bopt20}). Any edge of a (pre)periodic gap is either (pre)periodic or precritical.

	
	\begin{defn}[Fatou gaps]\label{Fatou:gap}
		An infinite gap $G$ is said to be a \emph{Fatou} gap if its
basis $G\cap \uc$ is uncountable.
	\end{defn}

If $G$ is a Fatou gap, $G\cap \uc$ can be represented as
the union of a non-empty perfect set and a countable set. By
\cite{kiw02} $G$ is \emph{(pre)periodic} under  $\sigma_d$. If $G$ is a
\emph{Fatou gap}, then the map $\psi_{G}:\bd(G) \to \uc $, collapsing
all edges of $G$ to points, monotonically maps $\bd(G)$ onto $\uc$.

\begin{thm}[Siegel gaps and Fatou gaps of degree
		$k>1$]\label{Classify:Fatou:gaps} Let $G$ be a
periodic Fatou gap of minimal period $n$. Then $\psi_{G}$
semiconjugates $\sigma_d^n|_{\bd(G)}$ to a map
$\hat{\sigma_G}=\hat{\sigma}:\uc\to \uc $ so that either (1)
$\si_n|_{\bd(G)}$ is of degree $k\ge 2$ and $\hat{\sigma}=\sigma_k:\uc
\to \uc,$ or (2) $\si_n|_{\bd(G)}$ is of degree one and $\hat{\sigma}$
is an irrational rotation.
\end{thm}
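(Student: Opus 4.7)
The plan is to manufacture a well-defined quotient dynamical system $\hat\sigma$ on $\uc$ from the return map $\sigma_d^n|_{\bd(G)}$ and then classify $\hat\sigma$ by its topological degree. First I would construct $\psi_G$ and descend the dynamics through it. The boundary $\bd(G)$ of the Fatou gap is a simple closed curve built from countably many edges of $\lam$ alternating with arcs of $\uc$, and collapsing each edge to a single point gives a monotone surjection $\psi_G\colon\bd(G)\to\uc$ as indicated just before the statement. Gap $\sigma_d$-invariance, applied $n$ times, shows that $\sigma_d^n$ maps $\bd(G)$ into itself and sends each edge of $G$ either to an edge of $G$ or to a single point; hence fibers of $\psi_G$ map into fibers, and the rule $\hat\sigma\circ\psi_G=\psi_G\circ\sigma_d^n$ well-defines a continuous map $\hat\sigma\colon\uc\to\uc$. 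Since $\sigma_d^n|_{\bd(G)}$ is the composition of a monotone map and an orientation-preserving covering, the monotone factor is absorbed into $\psi_G$, leaving $\hat\sigma$ itself an orientation-preserving covering of $\uc$ of some positive degree $k$.

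The case $k\ge 2$ is handled quickly. Here $\hat\sigma$ is an orientation-preserving degree-$k$ covering of $\uc$ and, because $\sigma_d^n$ is expanding on $\bd(G)$, the induced map $\hat\sigma$ has no wandering intervals. By a standard rigidity theorem (Shub) any such map is topologically conjugate to $\sigma_k$; absorbing this conjugacy into the identification of the quotient circle with $\uc$ produces $\hat\sigma=\sigma_k$ on the nose.

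The remaining case $k=1$ is the main obstacle. Here $\hat\sigma$ is a circle homeomorphism, and I must show that its rotation number $\rho$ is irrational and that $\hat\sigma$ is conjugate to the rigid rotation by $\rho$. If $\rho=p/q\in\rat$, then $\hat\sigma^q$ would have a fixed point whose $\psi_G$-fiber is a periodic vertex or a periodic edge on $\bd(G)$; following the analysis in \cite{bl02} one examines the complementary arcs of $\bd(G)$ between such periodic objects and uses the expansion of $\sigma_d$ to show that all vertices of $G$ would have to lie on finitely many leaves, contradicting the uncountability of $G\cap\uc$. Once irrationality of $\rho$ is established, a possible Denjoy counterexample is excluded by the same principle: a wandering interval of $\hat\sigma$ would lift through $\psi_G$ to a wandering arc of $\bd(G)$ bounded by distinct edges of $\lam$, a configuration forbidden by the folklore classification of q-lamination gaps in \cite{bl02}. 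With both possibilities ruled out, Poincar\'e's theorem identifies $\hat\sigma$ with an irrational rotation.
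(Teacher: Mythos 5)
The paper does not prove Theorem~\ref{Classify:Fatou:gaps}: it is stated without proof, attributed to folklore, with pointers to \cite{bl02} and \cite{bopt20}. So your proposal supplies an argument where the paper supplies none; the only question is whether your sketch is itself sound.

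The construction of $\psi_G$ and the descent of $\sigma_d^n|_{\bd(G)}$ to a continuous, orientation-preserving covering $\hat\sigma$ of some degree $k\ge1$ is correct and standard. Two later steps are shaky. First, the assertion that ``$\sigma_d^n$ is expanding on $\bd(G)$'' is not accurate: the return map is affine on each edge of $G$ (it takes edges to edges) and can well contract there. What the rigidity argument for $k\ge 2$ actually needs is that $\sigma_d$ uniformly expands the arcs of $G\cap\uc$, so that every nondegenerate arc of the quotient circle is eventually mapped onto all of $\uc$ by $\hat\sigma$; it is this locally-eventually-onto property, not expansion of the return map on $\bd(G)$, that rules out wandering intervals and, together with the degree, yields the conjugacy with $\sigma_k$. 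Second, and more seriously, your exclusion of a Denjoy-type $\hat\sigma$ in the degree-one case appeals to ``the folklore classification of q-lamination gaps in \cite{bl02}.'' That classification of periodic Fatou gaps \emph{is} the statement being proved, so as written this step is circular. A non-circular version needs the actual mechanism: a wandering interval of $\hat\sigma$ lifts to a wandering collection of arcs of $G\cap\uc$ whose lengths are simultaneously summable and forced to grow by the uniform expansion of $\sigma_d$ on $\uc$ (equivalently, the $\hat\sigma$-backward orbit of any nondegenerate arc is dense). The exclusion of a rational rotation number is in the right spirit (contradiction with $G\cap\uc$ being uncountable), but it too should be spelled out rather than delegated to \cite{bl02}, since that is essentially the content of the theorem.
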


If Theorem \ref{Classify:Fatou:gaps}(1) holds, $G$ is called a periodic
gap \emph{of degree $k$}, if Theorem \ref{Classify:Fatou:gaps}(2)
holds, $G$ is called a \emph{Siegel} gap. A (pre)periodic gap
eventually mapped to a periodic gap of degree $k$ (Siegel) is also said
to be \emph{of degree $k$ (Siegel)}.

\subsection{Sibling ($\si_d$-)invariant laminations} \label{sibling:invariant}

\emph{Sibling invariant laminations} were introduced in \cite{bmov13}. This notion is slightly more restrictive than that of \emph{Thurston invariant lamination}. However, \emph{sibling invariant geolaminations form a
closed set, include all q-laminations, and are Thurston invariant}. For our purposes it suffices to consider sibling invariant laminations only. A major advantage of working with them is that they are defined exclusively through properties of their leaves.\emph{A lamination $\mathcal{L}$ is sibling ($\si_d$-)invariant} if: \emph{(1) for each $\ell \in \mathcal{L}$, we have $\sigma_d(\ell) \in \mathcal{L}$}
 \emph{(2) for each $\ell \in \mathcal{L}$ so that $
\sigma_d(\ell)$ is a non-degenerate leaf,  there exist $d$ disjoint leaves $\ell_1$, $\ell_2$, $\dots$, $\ell_d$, in $\mathcal{L}$ so that $\ell=\ell_1$ and $\sigma_d(\ell_i)=\sigma_d(\ell)$ for all $
i=1,2,\dots,d$.} \emph{(3) for each $\ell \in \mathcal{L}$,  there exists $\ell_1 \in  \mathcal{L}$ so that $\sigma_d(\ell_1)=\ell$}.
	
	\begin{thm}[\cite{bmov13}]\label{t:space}
The space of all sibling $\sigma_d$-invariant lamination is compact.All q-laminations are sibling $\sigma_d$-invariant.
\end{thm}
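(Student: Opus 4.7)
Both claims are due to \cite{bmov13}; I would address them in turn.

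For compactness, equip the collection of sibling invariant laminations with the Hausdorff topology on closed subsets of $\cdisk$. The class of laminations itself (closed families of pairwise unlinked chords containing $\uc$ as degenerate leaves) is already closed under Hausdorff limits and hence compact, so the substance lies in showing that the three sibling conditions from Section \ref{sibling:invariant} pass to limits. Given $\lam_n \to \lam$ and $\ell \in \lam$, I pick $\ell_n \in \lam_n$ with $\ell_n \to \ell$. Then (1) follows from continuity of the linear extension $\si_d^*$, and (3) follows by choosing a preimage $\ell'_n \in \lam_n$ of each $\ell_n$ and extracting a convergent subsequence. For (2), with $\si_d(\ell)$ non-degenerate, pick sibling collections $\{\ell_n^{(1)},\dots,\ell_n^{(d)}\}$ with $\ell_n^{(1)}=\ell_n$ and pass to subsequential limits; unlinkedness is closed, and the $d$ limit chords remain distinct because the $d$ preimages of each endpoint of $\si_d(\ell)$ are discrete points varying continuously with that endpoint.

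For the second claim, let $\sim$ be an invariant laminational equivalence relation generating $\lam_\sim$, and let $\ell=\overline{ab}$ be a non-degenerate leaf, i.e., an edge of $\ch(A)$ for a $\sim$-class $A$. Property (1) follows from the fact that the orientation-preserving covering extension $\hat\tau$ of $\si_d|_A$ satisfying $A=\hat\tau^{-1}(\si_d(A))$ preserves cyclic adjacency: consecutive vertices of $\ch(A)$ map either to consecutive vertices of $\ch(\si_d(A))$ or coincide, so $\si_d(\ell)\in\lam_\sim$. For (3), any $\sim$-class $B\subset\si_d^{-1}(A)$ (which exists since $\si_d^{-1}(A)\ne\emptyset$ and is a union of $\sim$-classes) furnishes an edge of $\ch(B)$ mapping to $\ell$ by the reverse adjacency analysis.

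Property (2) is the main step and the principal technical obstacle. Writing $\si_d^{-1}(A)=B_1\sqcup\cdots\sqcup B_s$ as a disjoint union of $\sim$-classes with covering degrees $m_k$, a cardinality count gives $\sum m_k=d$. Each $B_k$ contributes $m_k$ edges of $\ch(B_k)$ mapping to $\ell$ (one per short-arc preimage of $\ell$ in $B_k$), for a total of $d$ candidate sibling leaves. Edges within a single $\ch(B_k)$ are unlinked by convexity, and edges from different $B_k, B_{k'}$ are unlinked since convex hulls of distinct $\sim$-classes are disjoint by property (2) of Definition \ref{Lam:equivalence}. The delicate point is to verify that each short-arc leaf is a genuine edge of its $\ch(B_k)$—that no other vertex of $B_k$ intrudes into the short arc—which requires careful use of the covering extension $\hat\tau$ together with the full-preimage property $B_k = \hat\tau^{-1}(A)$.
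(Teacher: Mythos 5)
The paper itself offers no proof of this theorem: it is stated as a citation to \cite{bmov13} (``Laminations in the language of leaves''), so there is no internal argument to compare against. Your sketch follows the route actually taken in that reference and is essentially sound.

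On the compactness half: your strategy --- show the family of all laminations is closed in the Hausdorff topology, then verify each of the three sibling axioms is preserved under limits --- is exactly the right one. Your treatment of axiom (2) is the only place that needs care, and you handle it correctly: the $d$ preimages of each endpoint of $\si_d(\ell)$ are uniformly separated (spacing $1/d$), a limit of disjoint siblings therefore uses distinct limiting preimage pairs, and unlinkedness passes to the limit; together this gives $d$ pairwise disjoint limit leaves rather than merely $d$ distinct ones, which is what axiom (2) requires.

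On the q-lamination half, there is a small but persistent index slip. You write $\si_d^{-1}(A)=B_1\sqcup\cdots\sqcup B_s$ and speak of edges ``mapping to $\ell$.'' As stated, this produces $d$ disjoint \emph{preimages} of $\ell$, whereas axiom (2) asks for $d$ disjoint leaves with the same \emph{image} $\si_d(\ell)$. The fix is to run the identical argument one level up, decomposing $\si_d^{-1}(\si_d(A))$ into $\sim$-classes (one of which is $A$ itself, so $\ell$ can be taken as $\ell_1$); the cardinality count $\sum m_k = d$ works verbatim. Equivalently, one can observe that your version is a clean reformulation of axiom (2): ``every non-degenerate leaf has $d$ pairwise disjoint preimage leaves, and any prescribed preimage can be one of them.'' Either way the content is correct, but as written the quantifier is aimed at the wrong level. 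Finally, the ``delicate point'' you flag --- that a short-arc preimage really is an edge of $\ch(B_k)$ --- is indeed the crux; it is resolved precisely by the full-preimage clause $B_k=\hat\tau_k^{-1}(\si_d(A))$ in Definition~\ref{Lam:equivalence}, which forces the lifted arc under $\hat\tau_k$ to contain no further points of $B_k$. You are right to call attention to it, and the argument you gesture at does close the gap.
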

	
	In what follows instead of \emph{sibling $\sigma_d$-invariant	laminations } we talk of  \emph {$\sigma_d$-invariant laminations}. Also, we talk interchangeably about leaves (gaps) of $\sim$ or of $ \mathcal{L}_{\sim}$. 	

\section{Unicritical invariant laminations: basic properties} \label{Sec:Lamin_Basic}

\begin{defn}\label{unicritical}
 A $\si_d$-invariant lamination is said to be \emph{unicritical}
if it has a unique critical set $\mathcal{C}(\lam)$ (which maps forward
$d$-to-$1$).
\end{defn}

From now on let $\lam$ be a
$\si_d$-invariant unicritical lamination. Let us discuss properties of $\lam$. Clearly, $\mathcal{C}(\lam)=\Cc$ is symmetric with respect to
rotations by $\frac{1}{d}$. The set $\Cc$ can be a regular $d$-gon
whose vertices cut the circle into $d$ arcs of length $\frac{1}{d}$
each. Otherwise $\Cc$ has more than $d$ sides and each circle arc
complementary to $\Cc\cap \uc$ is shorter than $\frac{1}{d}$.
Thus, $\Cc$ and in general any \emph{unicritical lamination}
$\mathcal{L}$ of degree $d$ has a \emph{$d$-fold rotational symmetry}.
In particular, every leaf $\ell\in \mathcal{L}$ has $d-1$ distinct
\emph{siblings}, all of equal length, placed symmetrically with respect
to the origin. A \emph{unicritical lamination} $\mathcal{L}$ has at
least $d$ leaves of maximal length called the \emph{majors} of
$\mathcal{L}$. Indeed, any unicritical lamination has the $d$-fold
rotational symmetry and so a major has $d-1$ distinct siblings. However
a priori there might exist another group of $d$ siblings of the same
length, etc. We will show below that because of the properties of the
length function this is not possible. In particular, this will imply
that there is only one group of $d$ sibling majors, and, therefore, a
unique image of a major of a lamination $\lam$ called the \emph{minor}
of $\mathcal{L}$.

Let $(a,b)$ be the arc of $\uc$ from the point $a$ to the point $b$
traversed anticlockwise; let $|(a,b)|$ be the length of this arc.
Define the \emph{length $|\overline{ab}|$ of a chord $\overline{ab}$}
as $|\overline{ab}|=\min \{|(a, b)|, |(b, a)|\}$.
The maximum length of a chord is $\frac{1}{2}$. Call a chord
$\ell=\overline{ab}$ \emph{critical} if $\sigma_d(a) = \sigma_d(b)$. The
length of a critical chord $\ell$ is $|\ell|=\frac{j}{d}$ where $j\in
\{1, \dots, t|t= [\frac{d}{2}]\}$ where $[\frac{d}{2}]$ denotes the
greatest integer not exceeding $\frac{d}{2}$. Evidently,
$|\sigma_d(\ell)|$ as a function of $|\ell|$ can be described as a
``sawtooth'' $d$-to-$1$ map $\psi:[0, \frac12]\to [0, \frac12]$
with $d$ segments of monotonicity (\emph{laps})
and slope $d$ or $-d$ on each of them; increasing and decreasing laps
alternate and the first lap $[0, \frac{1}{2d}]$ is increasing. In what
follows set $\psi(t)$ to be the \emph{length function} reflecting how
$|\si_d(\ell)|$ depends on $|\ell|$. The line $y=x$ intersects the
graph of $ |\sigma_d(\ell)|$ with respect to $|l|$ in the interval $
[0, \frac{1}{d}]$ at the points $0$ and $\frac{1}{d+1}$.
\begin{lem} \label{length:function}	
	A leaf $\ell$ of length $|\ell| < \frac{1}{d+1}$  will keep
	increasing in length under iteration of $\si_d$ until for some
	iterate we have $|\sigma_d^j(\ell)|\geq \frac{1}{d+1}$. Thus,
$|M(\lam)|\ge \frac{1}{d+1}$. Hence there are exactly $d$ majors of
$\lam$ that are all siblings and a unique minor $m(\lam)$ such that
$|m(\lam)|\le \frac{1}{d+1}$.
\end{lem}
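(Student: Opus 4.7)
The plan is to exploit the explicit piecewise-linear shape of the length function $\psi$ on $[0,\frac1d]$ together with the $d$-fold rotational symmetry forced on any unicritical lamination. My first step will be to record the needed facts about $\psi$: on $[0,\frac1d]$ the map $\psi$ is a tent, with slope $d$ on $[0,\frac{1}{2d}]$ rising from $0$ to $\frac12$ and slope $-d$ on $[\frac{1}{2d},\frac1d]$ descending from $\frac12$ back to $0$. Because $d\ge 2$, we have $\frac{1}{d+1}>\frac{1}{2d}$, so the fixed point $\frac{1}{d+1}$ of $\psi$ in $(0,\frac1d]$ sits on the decreasing lap. Two elementary consequences will drive the rest: $\psi(t)>t$ for all $t\in(0,\frac{1}{d+1})$, and $\psi(t)=1-dt>\frac{1}{d+1}$ whenever $t\in(\frac{1}{2d},\frac{1}{d+1})$.

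For the first sentence of the lemma, I would pick a leaf $\ell\in\lam$ with $0<|\ell|<\frac{1}{d+1}$ and track $t_j=|\sigma_d^j(\ell)|$. By the first step, $(t_j)$ is strictly increasing while it remains in $(0,\frac{1}{d+1})$, so were it to stay there forever monotone convergence would force it to a fixed point of $\psi$ in $[0,\frac{1}{d+1}]$, namely $0$ or $\frac{1}{d+1}$. Monotonicity kills the former; accumulation at $\frac{1}{d+1}$ from below is killed because a tail of $(t_j)$ would lie in $(\frac{1}{2d},\frac{1}{d+1})$, where one more application of $\psi$ jumps above $\frac{1}{d+1}$. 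Specializing the conclusion to a major then yields $|M(\lam)|\ge\frac{1}{d+1}$.

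For the upper bound and the uniqueness statement, I would use the rotational symmetry. A direct interlacing check shows that if $|\ell|>\frac1d$ the endpoints of $\ell$ and of its rotation $\ell+\frac1d$ alternate on $\uc$, so the two chords cross; since every leaf of a unicritical lamination has $\ell+\frac1d\in\lam$ as one of its siblings, this gives $|M(\lam)|\le\frac1d$, and because $\psi$ sends $[\frac{1}{d+1},\frac1d]$ onto $[0,\frac{1}{d+1}]$ we immediately get $|m(\lam)|=\psi(|M(\lam)|)\le\frac{1}{d+1}$. For uniqueness I would let $M_0,\dots,M_{d-1}$ be the $d$ rotational siblings of a major of length $L$; the complement of $\bigcup_k M_k$ on $\uc$ decomposes into short arcs of length $L$ (one per $M_k$) and long arcs of length $\frac1d-L$ (between consecutive $M_k$'s). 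Since $L\ge\frac{1}{d+1}>\frac{1}{2d}$ gives $L>\frac1d-L$, any chord $M'\in\lam$ disjoint from every $M_k$ must have both endpoints in one arc of length at most $L$; and $|M'|=L$ would force $M'=M_k$ for some $k$. Therefore every major belongs to the sibling group of $M$, giving exactly $d$ majors with common image the unique minor $m(\lam)=\sigma_d(M)$.

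The main obstacle I expect is the ``no accumulation at the fixed point'' step in the second paragraph, which hinges on the observation that $\psi$ crosses $y=t$ transversally from above on the decreasing lap. Once this is in hand, everything else amounts to a short calculation combined with the $d$-fold rotational symmetry that the preceding text has already established.
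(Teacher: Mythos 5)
Your treatment of the first claim (monotone growth of $|\sigma_d^j(\ell)|$ until it reaches $\tfrac{1}{d+1}$) is a correct and useful expansion of what the paper explicitly leaves to the reader: the observation that $\psi$ crosses the diagonal transversally at $\tfrac{1}{d+1}$, so that a tail of the orbit in $(\tfrac{1}{2d},\tfrac{1}{d+1})$ jumps above the fixed value on the next step, is exactly the right way to rule out accumulation from below. The bound $|M(\lam)|\le\tfrac1d$ via crossing with the $\tfrac1d$-rotated sibling, and the consequence $|m(\lam)|=\psi(|M(\lam)|)\le\tfrac{1}{d+1}$, are also fine.

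The uniqueness step, however, takes a genuinely different route from the paper's and contains a gap. You assert that any chord $M'\in\lam$ disjoint from every $M_k$ must have both endpoints in a single complementary arc of $\uc\setminus\bigcup_k M_k$. That is not true: $M'$ can lie in the central strip $C(M)$ with its two endpoints in two \emph{different} arcs of length $\tfrac1d-L$ (the arcs between consecutive siblings), and such a chord is still non-crossing and endpoint-disjoint from every $M_k$. You would need an additional estimate showing that any such "spanning" chord necessarily has length strictly greater than $L$ — which is true, but requires an argument you have not supplied. Note also the mislabeling of the complementary arcs ("short arcs of length $L$" versus "long arcs of length $\tfrac1d-L$" when in fact $L>\tfrac1d-L$), which suggests the geometry here was not fully pinned down. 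The paper sidesteps all of this with a one-line counting argument: a second sibling group of length $|M(\lam)|$ would give $2d$ pairwise non-crossing chords of equal length $\geq\tfrac{1}{d+1}$, whose arcs-under are pairwise disjoint, forcing $1\ge 2d\,|M(\lam)|\ge\tfrac{2d}{d+1}>1$, a contradiction. That counting argument handles the central-strip case automatically and is cleaner than the case analysis you attempt; I would recommend adopting it, or at least adding the missing estimate for spanning chords.
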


\begin{proof}
We leave to the reader the proof of initial claims of Lemma
\ref{length:function} dealing with how the length of a leaf grows. This
implies that $|M(\lam)|\ge \frac{1}{d+1}$. If there exists another
group of $d$ majors which have the same image and are of the length
$|M(\lam)|$, this will imply that $1\ge 2d|M(\lam)|\ge
\frac{2d}{d+1}>1$, a contradiction (recall that $d\ge 2$).
\end{proof}

For a non-diameter chord $n=\ol{ab}$, the smaller of the two arcs into
which $n$ divides $\uc$, is denoted by $H(n)$. Also, whenever we talk
about circle arcs $(x, y)$ of any length we always mean that the
movement from $x$ to $y$ inside $(x, y)$ is in the positive direction.
In the case of $H(n)$ we always set $n=\ol{ab}$ (thus, the movement
inside $H(n)$ from $a$ to $b$ is in the positive direction). In that
case we refer to points of $H(n)$ as located \emph{``under''} $n$. In
this setting $a$ is called the \emph{initial} point of $n$ (and $H(n)$)
while $b$ is called the \emph{terminal} point of $n$ (and $H(n)$).
Thus, for a non-diameter chord $\ell$ we can write
$\ell=\ol{i(\ell)t(\ell)}$.

Call the \emph{region} of the disk bounded by $m$ and $H(m)$ the
\emph{region bounded by $m$} and denote it by $\mathcal{R}(m)$. Given
two non-diameter chords $m$ and $n$, write $m\succ n$ if $n\subset
\mathcal{R}(m)$, and say that $n$ is a \emph{successor of $m$}. If $m,
n\in \uml_d, m\succ n $ and there exists no $\ell \in \uml_d$
satisfying $m\succ \ell\succ n$, then $n$ is called an \emph{immediate
successor of $m$}.

Given a minor $m=\ol{ab}$, call the arc $H(m)$ a \emph{minor} arc. Let
$\ell=\ell_0\in \mathcal{L}$; let $\ell_1, \dots, \ell_{d-1}$ be the
$d-1$ siblings of the leaf $l$. There are $d+1$ complementary
components to $\bigcup \ell_i$ in $\cdisk$. The component $C(\ell)$
bounded by $\ell$ and its $d-1$ siblings is
called the \emph{Central Strip (defined $\ell$)}.
The remaining $d$ components are $\mathcal{R}(\ell_i), 0\le i\le d-1$.

	


Lemma \ref{oscillating} is based on Lemma \ref{length:function}; the
proof is left to the reader.

\begin{lem}\label{oscillating}
Let $\mathcal{L}$ be a lamination, $\ell \in \mathcal{L}$ be a leaf,
$M=M(\lam)$ and $m=m(\lam)$. Then the length of images of $\ell$
increases $d$-fold on each step, and $H(\si^i_d(\ell))$ maps onto
$H(\si_d^{i+1}(\ell))$ homeomorphically until $\frac{|m|}d\le
|\si_d^N(\ell)|<|m|$. Then $H(\si^N_d(\ell))$ homeomorphically maps
onto $H(\si_d^{N+1}(\ell))$, and $|m|\le |\si_d^j(\ell)|\le |M|$ for
any $j>N$. Any leaf $\ell'$ with $|M|\ge \ell'\ge |m|$ is located in
$C(m)\sm C(M)$; in particular, if $i>N$ then $|M|\ge \si_d^i(\ell) \ge
|m|$, the leaf $\si_d^i(\ell)$ is located in $C(m)\sm C(M)$, and
$\si_d^i(\ell)$ never separates $\ell$ and $m$.
\end{lem}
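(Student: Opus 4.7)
My plan is to use the sawtooth length function $\psi$ of Lemma~\ref{length:function} throughout, relying on two numerical consequences of $|m|\le 1/(d+1)\le|M|$ and $d\ge 2$. First, $|m|/d\le 1/(d(d+1))<1/(2d)$. Second, in a unicritical lamination every leaf has length $\le 1/d$, since a chord of length $>1/d$ would cross its rotational siblings. The claim decomposes into a growth phase ($i\le N$) and a stabilisation phase ($i>N$).

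For the growth phase I would argue by induction that while $|\sigma_d^i(\ell)|<|m|/d$, the iterate lies in the first increasing lap $[0,1/(2d)]$ of $\psi$, so $|\sigma_d^{i+1}(\ell)|=d|\sigma_d^i(\ell)|$. Geometric growth yields a first index $N$ with $|\sigma_d^N(\ell)|\ge|m|/d$; automatically $|\sigma_d^N(\ell)|=d|\sigma_d^{N-1}(\ell)|<|m|$. Throughout this phase including step $N$ one has $|\sigma_d^i(\ell)|<|m|\le 1/(d+1)<1/d$, so $H(\sigma_d^i(\ell))$ is an arc of length less than $1/d$; since $\sigma_d$ is a $d$-fold covering of $\uc$, hence injective on arcs of length $<1/d$, its restriction to $H(\sigma_d^i(\ell))$ is a homeomorphism onto $H(\sigma_d^{i+1}(\ell))$.

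To enter the stable range I would compute $\psi$ case-wise at step $N$: if $|\sigma_d^N(\ell)|\le 1/(2d)$ then $\psi(|\sigma_d^N(\ell)|)=d|\sigma_d^N(\ell)|\ge|m|$, and if $|\sigma_d^N(\ell)|\in(1/(2d),|m|)$ then $\psi(|\sigma_d^N(\ell)|)=1-d|\sigma_d^N(\ell)|>1-d|m|\ge 1/(d+1)\ge|m|$; the upper bound $|\sigma_d^{N+1}(\ell)|\le|M|$ is immediate since $M$ is the longest leaf. For the geometric localization ``any leaf $\ell'$ with $|\ell'|\in[|m|,|M|]$ lies in $C(m)\setminus C(M)$'', the outer inclusion $\ell'\subset\overline{C(m)}$ holds because each petal $\mathcal{R}(m_i)$ is bounded by $m_i$ and its short arc $H(m_i)$ of length $|m|$, so any chord inside has length $\le|m|$ with equality only when $\ell'=m_i$. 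The inner exclusion exploits that $\sigma_d$ identifies the $d$ boundary arcs of $C(M)$ onto $H(m)$, so any leaf in the interior of $C(M)$ has image contained in $\mathcal{R}(m)$; hence $\psi(|\ell'|)\le|m|$, which on $[0,|M|]\subset[0,1/d]$ forces either $|\ell'|\le|m|/d$ or $|\ell'|=|M|$, the latter excluded because the majors lie on $\partial C(M)$. Iteration then keeps $|\sigma_d^j(\ell)|\in[|m|,|M|]$ and $\sigma_d^j(\ell)\subset C(m)\setminus C(M)$ for every $j>N$.

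The remaining non-separation assertion is where I expect the main bookkeeping: $C(m)\setminus C(M)$ splits into $d$ triangular sectors, each bounded by one $m_i$, one $M_j$, and one boundary arc. The strategy is to identify which sector contains each iterate $\sigma_d^j(\ell)$ --- traced back through the homeomorphisms $H(\sigma_d^i(\ell))\to H(\sigma_d^{i+1}(\ell))$ of the growth phase --- and to verify, using the non-crossing of leaves and the fact that $m$ bounds exactly one such sector, that $\ell$ and $m$ always lie on a common side of the chord $\sigma_d^j(\ell)$.
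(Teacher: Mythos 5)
The paper leaves the proof of this lemma to the reader, so there is no authors' argument to compare against; I can only assess your attempt on its own terms. One issue in the length/covering part is repairable but currently a gap: to conclude that $\si_d$ maps $H(\si_d^N(\ell))$ onto the \emph{short} arc $H(\si_d^{N+1}(\ell))$ you need $|\si_d^N(\ell)|\le\tfrac1{2d}$, not merely $<\tfrac1d$. Injectivity on an arc of length in $(\tfrac1{2d},\tfrac1d)$ gives an image arc of length $>\tfrac12$, which is the \emph{long} complementary arc, not $H$ of the image leaf. The needed bound does hold, but for a reason you never state: if $|\si_d^N(\ell)|>\tfrac1{2d}$ then $|\si_d^{N+1}(\ell)|=1-d\,|\si_d^N(\ell)|>1-d|m|\ge\tfrac{1-|m|}{d}=|M|$ (the middle inequality is equivalent to $|m|\le\tfrac1{d+1}$), contradicting that $\si_d^{N+1}(\ell)$ is a leaf of $\lam$. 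Instead you run a ``case $2$'' with $|\si_d^N(\ell)|\in(\tfrac1{2d},|m|)$ and check only $\psi>|m|$ there, without noticing the case is vacuous --- and without that observation the homeomorphism-onto-$H$ claim at step $N$ remains unjustified. The outer/inner localization of leaves of length in $[|m|,|M|]$ into $C(m)\sm C(M)$ is fine.

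The substantial gap is the final clause, that $\si_d^i(\ell)$ never separates $\ell$ and $m$. You explicitly defer this to ``the main bookkeeping'' and offer only a plan (decide which sector of $C(m)\sm C(M)$ holds each iterate, then use non-crossing); that is a sketch, not a proof, and the difficulty is genuine. Knowing only that $\si_d^i(\ell)$ lies somewhere in the annulus $C(m)\sm C(M)$ does not preclude it from lying in the sector attached to the very minor-sibling $m_j$ that cuts $\ell$ off from the rest of the disk, and a chord in that sector \emph{would} separate $\ell$ from $m$. Ruling this out requires using the dynamics connecting $\ell$ to its iterates (for instance the chain of arc-homeomorphisms from the growth phase together with how the holes of $C(M)$ map onto $H(m)$), not merely the length and localization facts you establish. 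As submitted, the last --- and arguably hardest --- assertion of the lemma is unproved.
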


Define a \emph{wandering triangle} as a triangle which has triangular
images under $\si_d$ so that the circular orientation on its vertices
is preserved and, moreover, has images with edges that do not cross. We
want to show that a gap $G$ of $\mathcal{L}$ is either (pre)periodic or
(pre)critical. By way of contradiction suppose $G$ is a gap which is
neither (pre)critical or (pre)periodic. Then any three vertices of $G$
form a convex hull which is, evidently also neither (pre)periodic nor
(pre)critical. Then the problem of proving that any gap $G$ of
$\mathcal{L}$ is either (pre)periodic or (pre)critical boils down to
the next lemma.


\begin{lem}\label{triangles:wander}
There are no wandering triangles whose images do not cross $\Cc(\lam)$. In particular, any gap of $\lam$ is (pre)periodic or (pre)critical, and if $\Cc(\lam)$ is (pre)periodic then any gap of $\lam$ is (pre)periodic.
\end{lem}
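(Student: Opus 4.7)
I would argue by contradiction: assume $T = T_0$ is a wandering triangle with images $T_n = \sigma_d^n(T)$ none of which crosses $\mathcal{C}(\lam)$. The paper's preceding discussion already reduces the two ``in particular'' clauses to this main claim --- if a gap $G$ of $\lam$ were neither (pre)periodic nor (pre)critical, any three vertices would produce a wandering triangle whose orbit sits inside the orbit of $G$ and hence avoids $\mathcal{C}(\lam)$; and if $\mathcal{C}(\lam)$ is itself (pre)periodic, then every (pre)critical gap eventually lands in the orbit of $\mathcal{C}(\lam)$ and so is itself (pre)periodic.

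The first step is to reduce to long-edged triangles. Apply Lemma \ref{length:function} separately to each edge of $T$; after replacing $T$ by a sufficiently late iterate $T_N$, every edge $e$ of every $T_n$ satisfies $|m| \le |e| \le |M|$. Lemma \ref{oscillating} then places every such edge in the annular region $C(m) \setminus C(M)$, so in particular no edge enters the central strip $C(M)$, which contains $\mathcal{C}(\lam)$. Since $\mathcal{C}(\lam)$ has $d$-fold rotational symmetry, the set $\disk \setminus \mathcal{C}(\lam)$ splits into $d$ cyclically permuted components $R_1, \dots, R_d$, and each $T_n$ lies in a single $R_{j(n)}$.

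The core of the proof adapts Thurston's quadratic wandering-triangle argument (Theorem II.5.2 of \cite{thu85}) to the unicritical setting. I assign to each $T_n$ a combinatorial \emph{type} recording $j(n)$, the ordering of the three edges of $T_n$ by length, and the cyclic position of its three vertices inside $R_{j(n)}$. Since edge lengths lie in the compact interval $[|m|, |M|]$ and there are only $d$ regions, only finitely many types occur. By pigeonhole, two iterates $T_{n_1}, T_{n_2}$ share a type. Using that $\sigma_d^{n_2 - n_1}$ sends $T_{n_1}$ to $T_{n_2}$, combined with the non-crossing of the edges in $\{T_n\}$ and the expansion of $\sigma_d$ on chords of length below $\frac{1}{d+1}$, one forces the orbit either to close into a periodic cycle (contradicting wandering) or to produce an infinite strictly nested family of triangles with edges bounded below in length, which cannot coexist inside the bounded region $R_{j(n_1)}$.

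The main obstacle is precisely this final extraction of a contradiction from the finite-type pigeonhole. Wandering polygons do exist for general degree-$d$ laminations \cite{bo08}, so the argument must genuinely use both the $d$-fold symmetry of $\mathcal{C}(\lam)$ and the standing hypothesis that the orbit of $T$ stays outside $C(M)$; without these the combinatorics fails to rigidify and one cannot rule out a truly wandering orbit. Lemma \ref{oscillating} supplies the needed rigidity: it forces the edges of each $T_n$ to sit in a single ``annular slot'' from the moment they first become long, and this is what makes the pigeonhole and non-crossing arguments effective in the unicritical case.
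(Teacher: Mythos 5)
Your proposal diverges from the paper's argument and has a genuine gap, which you yourself acknowledge in the final paragraph. The heart of the paper's proof is a \emph{measure} argument: since the images $A^i$ of a wandering triangle are pairwise disjoint with positive Lebesgue measure, $\la(A^i)\to 0$, which forces the shortest side $s^i\to 0$. The paper then extracts the subsequence $i_j$ of first-minimum times, observes that at the preceding moments $A^{i_{j+1}-1}$ has a side of length close to $\tfrac1d$, deduces from $d$-fold symmetry that $\Cc(\lam)$ must be the regular $d$-gon, and finally contradicts the minimality of $s^{i_j}$ by squeezing $A^{i_{j+1}-1}$ into the central strip of that long side. None of this machinery appears in your plan.

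Your Step $1$ misapplies Lemma \ref{oscillating}. That lemma is stated for leaves $\ell\in\lam$; the edges of a wandering triangle are diagonals lying inside gaps, not leaves. More tellingly, the paper's own proof shows that the shortest side $s^i$ tends to $0$. So if your claimed bound $|e|\ge|m|$ were actually available for all edges, the case $|m|>0$ would be an immediate contradiction with the area argument, and only the case $|m|=0$ (degenerate minor, which is exactly what the paper's analysis forces: $\Cc(\lam)$ is the all-critical regular $d$-gon) would remain. In that degenerate case your lower bound gives no information at all, so the reduction to ``long-edged triangles'' simply evaporates in the one case that matters.

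Your Step $2$ asserts that $\disk\setminus\Cc(\lam)$ has $d$ cyclically permuted components. This is false in general: the critical gap may be a $2d$-gon (giving $2d$ holes), a larger $dk$-gon, or a Fatou/Siegel gap (giving infinitely many holes). Only the all-critical regular $d$-gon produces exactly $d$ holes.

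Your Step $3$ (the ``finite-type pigeonhole'') is never carried out, as you concede. The ``type'' data you attach to $T_n$ — edge lengths and ``cyclic positions'' inside $R_{j(n)}$ — ranges over a continuum, not a finite set, so even the pigeonhole premise is unjustified. The paper avoids any such argument precisely by exploiting disjointness of images quantitatively via areas; your plan omits the area argument entirely, and it is the one ingredient that makes the proof go through.
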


\begin{proof}
Let $A^0=\ch(x_0, y_0, z_0)$ be a wandering triangle whose images do
not cross $\Cc(\lam)$. Set $A^i=\si_d(A^0)$, $\sigma_d^i(x) = x_i$,
$\sigma_d^i(y) = y_i$, and $\sigma_d^i(z) = z_i$. Let $L^i$ be the
\emph{length of the longest side} of $A^i$ and let $s^i$ be the
\emph{length of the shortest side} of $A^i$ (in defining $L^i$ and
$s^i$ we use the above introduced length function so that $L^i\ge
\frac{1}{d+1}\ge s^i$ while equalities are only possible if $A^i$ is an
equilateral triangle). Evidently, there exists a continuous increasing
function $\Delta:[0, \frac{1}{d+1}]\to \R_{\ge 0}$ such that $\Delta(0)=0$
and for every $0\le t\le \frac{1}{d+1} $ the area of a triangle inscribed in
the unit disk with the shortest side at least $t$ equals $\Delta(t)$.
Denote the Lebesgue measure of a set $T$ by $\la(T)$.

Since $\la(\cdisk)<\infty$ and $\la(A^i)>0$, then $\la(A^i)$ $\conv 0$.
Since $\la(A^i) \ge \Delta(s^i)>0 $, then $s^i\conv 0$. We can
inductively choose a subsequence $i_j$ such that $i_{j+1}>i_j$ is the
first moment when $s^{i_{j+1}}<s^{i_j}$. Then, at the preceding moment,
the triangle $A^{i_{j+1}-1}$ has a side close to $\frac{1}{d}$ (this
side is the one whose image has length $s^{j_i}$). Hence the critical
set $\Cc(\lam)$ is a regular polygon with critical edges of length
$\frac{1}{d}$. Since $s^i\rightarrow 0$, the triangles $A^{i_{j+1}-1}$
have one short side and two long sides close to an edge of $\Cc(\lam)$,
of whom the closer to the edge side is of length $L^{i_j}\conv
\frac{1}{d}$.

We may assume that $i_1=2$ and $A^1$ already is close to an edge of
$\Cc(\lam)$ and has the above described properties. Denote the sides of
$A^1$ by $X^1,$ $Y^1$ and $Z^1$ where $|Z^1|\ll |Y^1|\lessapprox
|X^1|\lessapprox \frac{1}{d}$. Evidently, $s^{i_1}=d(\frac{1}{d}-|X^1|)$.
The triangle $A^{i_2-1}$ must lie in the Central Strip $C(X^1)$ of
$X^1$ (because $s^{i_2}<s^2=s^{i_1}$), thus, $A^{i_2-1}$ is squeezed between
$\Cc(\lam)$ and the $d$ siblings of $X^1$. However this implies that it
has at least one side of length less than
$\frac{1}{d}-|X^1|<s^1=d(\frac{1}{d}-|X^1|)$, a contradiction with the
choice of sequence $i_j$ (by definition, the first moment the shortest
side of the image of $A^1$ drops below $s^1$ must be $i_2$, not
$i_2-1$).
\end{proof}

Now we show that if $G$ is a non-critical periodic gap of
$\mathcal{L}$, then $G$ is finite and the first return map $g$ of $G$
permutes the sides of $G$ transitively as a rational rotation. For a
set $T$ let $\itr(T)$ be its interior.

\begin{lem}\label{transitivity}
Let $G$ be a periodic gap of $\mathcal{L}$,
and there are no critical gaps in the orbit of $G$. Then $G$ is
finite and if $g$ is the first return map of $G$, then $g$ permutes the
sides of $G$ transitively as a rational rotation.
\end{lem}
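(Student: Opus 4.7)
The plan is to prove the two claims in order: first that $G$ is finite, and then that the first return map $g=\sigma_d^n$ on $G$ acts transitively on the sides as a rational rotation.

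For finiteness, I argue by contradiction: suppose $G$ is infinite. As a periodic infinite gap, $G$ is a Fatou gap, and Theorem~\ref{Classify:Fatou:gaps} splits into two cases. If $G$ has degree $k\ge 2$, the return map $\sigma_d^n|_{\bd(G)}$ is the composition of $\sigma_d|_{\bd(\sigma_d^i(G))}$ for $0\le i<n$, whose degrees multiply to $k\ge 2$; hence some factor has degree $\ge 2$, and the corresponding gap in the orbit of $G$ is critical, contradicting the hypothesis. If instead $G$ is a Siegel gap, then each $\sigma_d|_{\bd(\sigma_d^i(G))}$ has degree one and is injective on the boundary. Take any edge $\ell$ of $G$: by the statement preceding Definition~\ref{Fatou:gap}, $\ell$ is either (pre)periodic or precritical. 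Injectivity on boundaries along the orbit rules out precriticality (no $\sigma_d^i(\ell)$ can collapse to a point), and if $\ell$ were (pre)periodic then some iterate $\sigma_d^j(\ell)$ would be fixed by $\sigma_d^{mn}$ for some $m\ge 1$; pushing this through the semiconjugacy $\psi_{\sigma_d^j(G)}$ would produce a periodic point for the $m$-th iterate of the irrational rotation $R$, which is impossible. Since a Siegel gap has uncountably many edges, this contradicts the previous paragraph.

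For transitivity, $G$ is a finite polygon with vertices $V=\{v_1,\dots,v_N\}$ in circular order; because $\sigma_d$ is injective on $\bd(G)$, the map $g$ is an orientation-preserving bijection of $V$ of the form $v_i\mapsto v_{i+k\bmod N}$ for some fixed $k$. I must show $\gcd(k,N)=1$: otherwise $\gcd(k,N)=e>1$ and the set $P=\ch\{v_j,v_{j+k},v_{j+2k},\dots\}$ is a proper $g$-invariant sub-polygon of $G$ with $N/e<N$ vertices, hence is itself $\sigma_d$-periodic. To derive a contradiction from the existence of such a $P$, I would invoke the $d$-fold rotational symmetry of the unicritical lamination to bring in the $d-1$ sibling rotates of $P$, analyze the lengths of their edges and of their iterates through Lemmas~\ref{length:function} and~\ref{oscillating}, and invoke Lemma~\ref{triangles:wander} to rule out the resulting configuration---producing either a wandering triangle built from $P$ and its siblings or a second critical object incompatible with the uniqueness of $\Cc(\lam)$.

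The main obstacle I anticipate is the transitivity step: finiteness reduces cleanly to the Fatou gap classification together with the injectivity-based exclusion of edges in the Siegel case, whereas transitivity demands a careful combinatorial argument leveraging unicriticality to rule out proper $g$-invariant sub-polygons of $G$. The full strength of the unicritical structure---in particular the $d$-fold rotational symmetry and the uniqueness of the critical set $\Cc(\lam)$---must be marshaled to obtain the desired contradiction, and isolating the right wandering-triangle-style obstruction is the delicate part of the proof.
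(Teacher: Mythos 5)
Your proposal takes a genuinely different route from the paper, and unfortunately the most substantive part is left as a sketch rather than a proof.

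The paper does not separate finiteness from transitivity at all. It first observes that, because $\lam$ is unicritical and no critical gap occurs in the orbit, the first-return map $g$ is a homeomorphism of $\bd(G)$, hence a bijection on sides; it then supposes there are $N\ge 2$ cycles of sides, picks the shortest leaf $\ell_i$ in each cycle together with the leaf $\ell'_i$ in the same cycle mapping onto $\ell_i$ (so $|\ell'_i|>\frac1{d+1}$ by Lemma~\ref{length:function}), observes that the short arcs $H_i$ bounded by the $\ell_i$'s are nested, and derives a contradiction from the orbit structure in both cases $N\ge3$ and $N=2$. Transitivity and hence finiteness drop out simultaneously from this single length argument.

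Two concrete problems with your version. First, the finiteness step hinges on the assertion that in the Siegel case ``each $\sigma_d|_{\bd(\sigma_d^i(G))}$ has degree one and is injective on the boundary.'' Degree one does not give injectivity: by Theorem~\ref{Classify:Fatou:gaps} the boundary map of a Siegel gap (and its images) is a \emph{monotone} degree-one map, and monotone maps may collapse an edge to a point. Indeed, a Siegel gap's orbit always contains a gap with a critical edge — that is precisely how the irrational-rotation semiconjugacy arises — so the boundary map is \emph{not} injective there, and the precriticality you are trying to exclude is exactly what happens. The paper's argument handles this by appealing to unicriticality together with ``no critical gap in the orbit'' to conclude that $g$ is a homeomorphism; whether one finds that step fully justified or not, your proposal replaces it with a claim that is simply false as stated. (Incidentally, a Siegel gap has only \emph{countably} many edges, not uncountably many; the Cantor set is the basis, not the edge set.)

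Second, and more importantly, your transitivity argument is not a proof. You correctly reduce to excluding a proper $g$-invariant sub-polygon $P$ of $G$, but then write only that you ``would invoke'' the $d$-fold rotational symmetry, the length lemmas, and the wandering-triangle lemma to get a contradiction, and explicitly flag this as ``the delicate part.'' This is a plan, not an argument, and the configuration you gesture at (``a wandering triangle built from $P$ and its siblings or a second critical object'') is not obviously obtainable: a $g$-invariant sub-polygon is periodic, not wandering, and its siblings need not produce a competing critical set. The paper avoids this difficulty entirely by working directly with the lengths of the shortest leaves in each orbital cycle and the nesting of the corresponding arcs; that is the mechanism your sketch would need to replicate or replace, and at present it does neither.
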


\begin{proof}

By the assumption and because $\lam$ is unicritical, the first return
map $g:\bd(G)\to \bd(G)$ is a homeomorphism that acts as a bijection on
the sides of $G$. Suppose that there are $N\ge 2$ cycles $\Aa_1, \dots,
\Aa_N$ of sides of $G$. For each $i$, choose the leaf $\ell_i\in \Aa_i$
of shortest length $s_i$; by the properties of the length function,
$s_i<\frac{1}{d+1}$. Let $\ell'_i\in \Aa_i$ be the leaf with
$\si_d(\ell')=\ell_i$. Then $L_i=|\ell'_i|>\frac{1}{d+1}$.

Denote the circle arc of length $s_i$ with the endpoints of $\ell_i$ by
$H_i$. By the above $\si_d(\Cc(\lam))\cap \uc\subset H_i$ for any $i$,
and we may number these arcs so that $\{H_1\supset \dots \supset H_N\}$
is a nested collection. Let $N\ge 3$. Then $\ell_N$ and $\ell_1$ must
be edges of distinct images of $G$. However then some image of $\ell_1$
is a leaf with endpoints contained in $H_2$, a contradiction with the
choice of $\ell_1$. Now let $N=2$. Let $\ell_i$ be an edge of $G_i$
where $G_i$ is some image of $G$, $i=1, 2$. Since $G$ is a gap, each
$\Aa_i$ has at least two leaves in $G$. If $\ell_1$ separates
$\Cc(\lam)$ from $\itr(G_1)$ then this implies that some image of
$\ell_1$ distinct from $\ell_1$ will have endpoints in $H_1$, a
contradiction with the choice of $\ell_1$. Hence the opposite takes
place and it is $\ell_1$ that is separated from $\Cc(\lam)$ by
$\itr(G_1)$. Now the fact that $H_1\supset H_2$ implies that $G_2\ne
G_1$ has all its vertices in $H_1$ so that, again, some image of
$\ell_1$ distinct from $\ell_1$, has endpoints in $H_1$, a
contradiction.
\end{proof}

A crucial Thurston's result in the \emph{quadratic case} was the
\emph{Central Strip Lemma} stating that no forward image of the
\emph{minor} of a \emph{quadratic lamination} $\mathcal{L}$ can enter
the \emph{Central Strip defined by the majors of the lamination
$\mathcal{L}$}. A version of this result holds for
\emph{unicritical laminations of arbitrary degree $d$}. Recall that the
Central Strip $C(\ell)$ of a leaf $\ell$ is closed. Call components of
$C(\ell)\cap \uc$ \emph{holes} of $C(\ell)$, and call their closures
\emph{closed holes} of $C(\ell)$. Extending this concept, for a compact set $A$ call a component of $\uc\sm A$ a \emph{hole of $A$}; if $\ell$ is a chord connecting the endpoints of a hole of $A$, we say that the hole is located \emph{behind} $\ell$.

\begin{lem}[Central Strip Lemma]\label{Central:strip}
		
Let $\ell\in \mathcal{L}, |\ell|<\frac{1}{d}$. If $\si_d^j(\ell)$ is
the first time when $\ell$ re-enters $C(\ell)$ then there are three
cases.

\begin{enumerate}

\item If $|\ell|\le \frac{1}{d(d+1)}$ then $\si_d(\ell)\subset
    C(\ell)$ and the endpoints of $\si_d(\ell)$ may belong to one or
    two closed holes of $C(\ell)$.

\item If $\frac{1}{d(d+1)}<|\ell|\le \frac{1}{d+1}$ then
    $\si_d(\ell)\subset C(\ell)$ and the endpoints of $\si_d(\ell)$
    belong to two consecutive closed holes of $C(\ell)$.

\item If $\frac{1}{d+1}<|\ell|$ then $j>1$ and the endpoints of
    $\si_d^j(\ell)$ belong to two consecutive closed holes of
    $C(\ell)$.

\end{enumerate}

\end{lem}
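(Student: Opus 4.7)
The plan is to compare $|\sigma_d(\ell)|$ with the sizes of the arcs of $\uc$ that bound $C(\ell)$ and its complementary caps. Set $s=|\ell|$. Each hole of $C(\ell)$ is an arc of length $\tfrac{1}{d}-s$, and each arc $H(\ell_i)$ lying under a sibling $\ell_i$ has length $s$. Two elementary facts will be used throughout: a chord contained in $\mathcal{R}(\ell_i)$ has length at most $s$; and a chord inside $C(\ell)$ whose endpoints lie in the closed holes has length at most $\tfrac{1}{d}-s$ if both endpoints sit in the same hole, and at least $s$ if they sit in different holes (with equality precisely when the chord coincides with a sibling). Together these yield a ``forbidden gap'' $(\tfrac{1}{d}-s,\,s)$ of chord lengths that strictly cannot occur inside $C(\ell)$.

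In cases (1) and (2), $s\le\tfrac{1}{d+1}$, and the length function $\psi$ yields $|\sigma_d(\ell)|>s$ (with equality only at the boundary $s=\tfrac{1}{d+1}$, where $\sigma_d(\ell)$ coincides with a sibling of $\ell$). Hence $\sigma_d(\ell)\subset C(\ell)$ with endpoints in closed holes. For case (1), $|\sigma_d(\ell)|=ds\le\tfrac{1}{d}-s$ permits the chord to fit in a single hole, and the additional inequality $ds<\tfrac{1}{d}+s$ (which follows from $s\le\tfrac{1}{d(d+1)}$) excludes non-adjacent holes by length, giving the ``one or two consecutive closed holes'' conclusion. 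For case (2), $|\sigma_d(\ell)|>\tfrac{1}{d}-s$ forces the endpoints into two different holes. To rule out non-consecutive holes when the length bound alone is inconclusive (this happens for $d\ge 4$), I would use the $d$-fold rotational symmetry of unicritical laminations: if $\sigma_d(\ell)$ had endpoints in non-consecutive holes $H_m, H_{m'}$, then the rotation of $\sigma_d(\ell)$ by $\tfrac{1}{d}$, which is a sibling of $\sigma_d(\ell)$ lying in $\lam$, would have endpoints in $H_{m+1}, H_{m'+1}$; the four points then appear in cyclic order $H_m, H_{m+1}, H_{m'}, H_{m'+1}$ around $\uc$, forcing the two leaves to interleave and cross, contradicting $\lam$ being a lamination.

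For case (3), $s>\tfrac{1}{d+1}$ gives $|\sigma_d(\ell)|=1-ds$, and a direct check shows $\tfrac{1}{d}-s<1-ds<s$, placing the chord length squarely inside the forbidden gap. Hence $\sigma_d(\ell)\not\subset C(\ell)$ and by non-crossing $\sigma_d(\ell)\subset\mathcal{R}(\ell_i)$ for some $i$, so $j>1$. I would iterate, using Lemma~\ref{oscillating} to ensure the lengths $|\sigma_d^n(\ell)|$ grow into the range $[|m(\lam)|,|M(\lam)|]$ and in particular some iterate attains length at least $s$, at which moment the chord no longer fits in any cap and is forced into $C(\ell)$. At the first such index $j$, $\sigma_d^{j-1}(\ell)\subset\mathcal{R}(\ell_{k'})$ for some $k'$, so both its endpoints sit on $H(\ell_{k'})$; their images under $\sigma_d$ land on the arc $\sigma_d(H(\ell_{k'}))$ and, since $\sigma_d^j(\ell)\subset C(\ell)$, in the union of closed holes. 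They fall precisely into the two holes $H_{k'-1}, H_{k'}$ flanking the exit sibling $\ell_{k'}$; the rotational-sibling argument again excludes any non-consecutive configuration, and the length inequality $|\sigma_d^j(\ell)|\ge s>\tfrac{1}{d}-s$ excludes the single-hole case.

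The step I expect to be the main obstacle is the rigorous execution of case (3): verifying that the orbit genuinely escapes the cap system in finitely many steps, and that at the moment of escape its endpoints align with the two consecutive closed holes adjacent to the exit sibling rather than slipping into a non-adjacent configuration. The length-function dynamics reports only chord lengths and not precise angular positions, so this will likely require carefully combining Lemma~\ref{oscillating} with the non-crossing constraint inside $\mathcal{R}(\ell_{k'})$ and the linear behaviour of $\sigma_d$ restricted to $H(\ell_{k'})$.
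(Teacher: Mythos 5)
Your overall strategy matches the paper's: compare $|\si_d^j(\ell)|$ to the hole length $\frac1d-|\ell|$, and use the $d$-fold rotational symmetry of a unicritical lamination to rule out a chord joining two non-consecutive holes. Cases (1), (2), and the $j>1$ claim in case (3) are handled exactly as in the paper, and the rotation/crossing argument you give is the same one the paper uses to conclude.

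There is, however, a genuine gap in case (3), precisely at the spot you flag. To exclude the single-hole configuration at time $j$ you assert $|\si_d^j(\ell)|\ge s$, but this is not established. Lemma~\ref{oscillating} only says the lengths eventually lie in $[|m(\lam)|,|M(\lam)|]$, and $|m(\lam)|$ can well be smaller than $\frac1d-s$; and the length dichotomy inside $C(\ell)$ (``$\le \frac1d-s$ in one hole, $\ge s$ in two'') gives $\ge s$ only \emph{after} you know the endpoints lie in distinct holes, which is exactly what you are trying to show — so the argument is circular. The paper closes this with a short inductive argument on the length function $\psi$ that you do not reproduce: one shows $\psi^i(t)>\frac1d-t$ for all $1\le i\le j-1$ (if $i$ were the least counterexample, the sawtooth structure of $\psi$ together with the cap constraint $\psi^{i-1}(t)<t$ forces $\psi^{i-1}(t)\le\frac1d-t$ as well, and descending to $i=1$ gives $1-dt\le\frac1d-t$, i.e. $t\ge\frac1d$, contradiction). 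From $\frac1d-t<\psi^{j-1}(t)<t$ one gets $\psi^j(t)>1-dt>\frac1d-t$, which rules out a single hole. Separately, your claim that the endpoints of $\si_d^j(\ell)$ ``fall precisely into the two holes flanking the exit sibling $\ell_{k'}$'' is not justified: $\si_d(\ol{H(\ell_{k'})})$ is the long arc complementary to $H(\si_d(\ell))$, which contains every hole of $C(\ell)$, so it imposes no such constraint. That step is fortunately unnecessary, since the rotation argument already finishes the proof.
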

	
\begin{proof} Set $|\ell|=t$. If $x<\frac{1}{d+1}$, then
$|\si_d(\ell)|=\psi(t)>t$ and $\si_d(\ell)$ must be contained in
$C(\ell)$. The length of each hole of $C(\ell)$ is $\frac{1}{d}-t$.
Computations show that if $t\le \frac{1}{d(d+1)}$ then $\psi(t)\le
\frac{1}{d}-t$. Therefore in this case the endpoints of $\si_d(\ell)$ may
belong to the same closed hole of $C(\ell)$. On the other hand, it is
easy to see that if $\frac{1}{d(d+1)}<t\le \frac{1}{d+1}$ then $t\le
\psi(t)$ and $\frac{1}{d}-t<\psi(x)$ which implies the desired.

It remains to consider the case when $\frac{1}{d+1}<t<\frac{1}{d}$
($t\ne \frac{1}{d}$ by the assumptions of the lemma).
Clearly, $\psi^i(t)<t$ for $i=1, \dots, j-1$. We
claim that $\psi^i(t)>\frac{1}{d}-t$ for $i=1, \dots, j-1$. Indeed, if
$i<j$ is the least with $\psi^i(t)\le \frac{1}{d}-t$ then the
properties of $\psi$ imply $\psi^{i-1}(t)>t$, a contradiction with
$\psi^i(t)<t$ for $i=1, \dots, j-1$. Thus,
$t>\psi^{j-1}(t)>\frac{1}{d}-t$, and, hence, $\psi^j(t)>1-dt$. Since
each component of $\uc\cap C(\ell)$ is of length $\frac{1}{d}-t<1-dt$,
then $\si_d^j(\ell)$ connects distinct closed holes of $C(\ell)$ and
$|\si_d^j(\ell)|\ge t$.
Observe now that if a leaf connects two closed holes of
$C(\ell)$ that are not consecutive then its rotation by the angle
$\frac{1}{d}$ will cross itself, a contradiction.
\end{proof}

Lemma \ref{Central:strip} has important consequences.

\begin{cor}\label{Corr:CentralStrip}
Let $\mathcal{L}$ be a $\si_d$-invariant unicritical lamination. Let
$M_1, \dots M_d$ be the majors of $\mathcal{L}$ and $m=\si_d(M_1)$ be
its minor. Let $C(\lam)$ be the central strip defined by the majors of
the lamination $\mathcal{L}$. Then no forward iterate of $m$ can enter
$C(\lam)$ unless it equals $M_s$ for some $s$. 	
\end{cor}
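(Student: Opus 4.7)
The strategy is to apply the Central Strip Lemma (Lemma \ref{Central:strip}) to a single major $M=M_1$, exploiting the fact that the central strip $C(M)$ defined by $M$ and its $d-1$ siblings coincides with $C(\lam)$, because those siblings are precisely the remaining majors $M_2,\dots,M_d$. By Lemma \ref{length:function} we have $|M|\in[1/(d+1),1/d]$, which is exactly the range in which Lemma \ref{Central:strip} gives detailed information about the first return of $M$ to $C(\lam)$.

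First I would dispose of the two boundary cases. If $|M|=1/d$ then $M$ is critical and $m$ is a degenerate point, so the statement is vacuous. If $|M|=1/(d+1)$, then since $1/(d+1)$ is the positive fixed point of the length function $\psi$ on $[0,1/d]$, the image satisfies $|m|=1/(d+1)$, so by the uniqueness part of Lemma \ref{length:function} the leaf $m$ itself equals some $M_s$; the identity $\sigma_d(m)=\sigma_d(M_s)=m$ then shows every iterate of $m$ equals that same major, and the claim is immediate.

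The substantive case is $1/(d+1)<|M|<1/d$. Here case (3) of Lemma \ref{Central:strip} applies to $\ell=M$ and supplies an integer $j>1$ such that $\sigma_d(M),\dots,\sigma_d^{j-1}(M)$ all avoid $C(\lam)$ and $\sigma_d^j(M)$ is the first re-entry, with endpoints in two consecutive closed holes of $C(\lam)$. The length estimate $|\sigma_d^j(M)|\ge|M|$ built into the proof of that lemma forces $\sigma_d^j(M)$ to have maximal length, and by Lemma \ref{length:function} the leaves of maximal length in $\lam$ are exactly the $d$ majors; consequently $\sigma_d^j(M)=M_s$ for some $s$.

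Translating back to the orbit of $m=\sigma_d(M)$ finishes the argument: for $0\le k\le j-2$ we have $\sigma_d^k(m)=\sigma_d^{k+1}(M)$, which avoids $C(\lam)$ by case (3); at $k=j-1$ we obtain $\sigma_d^{j-1}(m)=\sigma_d^j(M)=M_s$; and $\sigma_d^j(m)=\sigma_d(M_s)=m$ closes the loop, so the orbit of $m$ is periodic and visits only these leaves. The step I expect to require the most care is the identification $\sigma_d^j(M)=M_s$: the cleanest route is the length inequality $|\sigma_d^j(M)|\ge|M|$ from the proof of Lemma \ref{Central:strip} combined with the uniqueness of the $d$-element sibling group of majors, rather than a direct geometric ``no-crossing'' argument at the two consecutive closed holes, which would have to accommodate leaves that share a single endpoint with a major without genuinely crossing it.
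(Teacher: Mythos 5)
Your proof is correct and uses the same essential mechanism as the paper: apply the Central Strip Lemma to a major $M$, invoke the length inequality $|\sigma_d^j(M)|\ge|M|$ from its proof, and conclude from the uniqueness of the sibling group of majors that the re-entering leaf must be a major. The paper states this in a single line (and leaves the periodicity closure implicit), whereas you spell out the boundary cases and the resulting periodic cycle explicitly; this is more detail, not a different route.
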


\begin{proof}
If $\sigma_d^j(m)=\sigma_d^{j+1}(M_i)\subset C(\lam)$ then by Lemma
\ref{Central:strip} $|\sigma_d^{j+1}(M_i)|\ge |M_i|$ which by
definition of a major implies that $\sigma_d^{j+1}(M_i)=M_s$ for any
$i$ and some (unique) $s$.
\end{proof}

A $d$-gon $G$ is said to be \emph{all-critical} if $\si_d(G)$ is a
point.

\begin{thm}\label{minors:dontcross}
Minors of distinct laminations do not cross.
\end{thm}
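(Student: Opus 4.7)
The plan is to argue by contradiction. Suppose $m_1 = m(\mathcal{L}_1)$ and $m_2 = m(\mathcal{L}_2)$ cross at a single interior point $p$, where $\mathcal{L}_1 \neq \mathcal{L}_2$ are unicritical $\sigma_d$-invariant laminations. By Lemma~\ref{length:function} both minors have length at most $\tfrac{1}{d+1}$, while each has $d$ sibling majors of length at least $\tfrac{1}{d+1}$ arranged with a common $\tfrac{1}{d}$-rotational symmetry; denote them by $M_1,\dots,M_d$ for $\mathcal{L}_1$ and $N_1,\dots,N_d$ for $\mathcal{L}_2$.

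First I would lift the crossing. Since $\sigma_d$ is a degree-$d$ covering of $\uc$, the preimage $\sigma_d^{-1}(p)$ has exactly $d$ points, each a transverse intersection of some $M_i$ with some $N_j$, so the total intersection count is $\sum_{i,j}|M_i\cap N_j|=d$. The rotation $\rho$ by $\tfrac{1}{d}$ permutes each family of majors, so if some $N_j$ crossed $k\ge 2$ of the $M_i$'s, rotating would show every $N_j$ crossing $k$ of the $M_i$'s, producing at least $kd\ge 2d$ crossings in all, a contradiction. Hence each $N_j$ crosses exactly one $M_i$. Without loss of generality $N_1$ crosses $M_1$: one endpoint $x$ of $N_1$ lies in $H(M_1)$, and the other endpoint $y$ must avoid every $H(M_i)$ (or else $N_1$ would cross the corresponding $M_i$), so $y$ lies in one of the $d$ ``between arcs'' $B$ making up the $\uc$-boundary of $C_1 := C(\mathcal{L}_1)$.

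The contradiction I would then derive stays inside $\mathcal{L}_2$: I claim that under these constraints $m_2 = \sigma_d(N_1)$ must cross one of the sibling majors $N_{j_0}$, which is impossible because both are leaves of the lamination $\mathcal{L}_2$. Concretely, $m_2 = \overline{\sigma_d(x),\sigma_d(y)}$ with $\sigma_d(x)\in\sigma_d(H(M_1))=\ol{\uc\sm H(m_1)}$ and $\sigma_d(y)\in\sigma_d(B)=H(m_1)$; in particular $m_2$'s endpoints lie on opposite sides of $m_1$, confirming the crossing $m_1\cap m_2\ne\emptyset$ predicted by the assumption. Applying the $\tfrac{1}{d}$-rotation $\rho$ to $N_1$, one then shows that the appropriate rotate $\rho^k(N_1)=N_{j_0}$ separates $\sigma_d(x)$ from $\sigma_d(y)$ on $\uc$, so $m_2$ crosses $N_{j_0}$.

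The main obstacle is making the separation step rigorous, namely identifying which sibling $N_{j_0}$ does the separating and verifying it in every case. In the quadratic case $d=2$ direct computation shows that each of the two possible between-arc positions for $y$ forces $m_2$ to cross either $N_1$ or $N_2$. For $d\ge 3$ the between arc containing $y$ is not uniquely determined by length considerations alone, so a combinatorial case analysis is required; the rigidity imposed by $|N_1|\in[\tfrac{1}{d+1},\tfrac{1}{d}]$ combined with the $\tfrac{1}{d}$-fold symmetry of the $\{N_j\}$ should pin down the correct $N_{j_0}$ in each case.
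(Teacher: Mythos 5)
Your strategy is genuinely different from the paper's. The paper picks a non-periodic point behind both minors, builds a compatible Thurston pullback lamination $\mathcal{L}'$ (Theorem~\ref{prop:sat}) containing the all-critical $d$-gon over that point, observes via Corollary~\ref{Corr:CentralStrip} that $m_1$ and $m_2$ must be diagonals of a single gap of $\mathcal{L}'$, and then uses Lemma~\ref{triangles:wander} together with the transitivity of the first return map (Lemma~\ref{transitivity}) to force a forward image of $m_i$ to cross $m_i$. Your proposal instead tries to extract a contradiction from the static geometry of the two families of sibling majors after a single pullback. That cannot work: the step you flag at the end as the ``main obstacle'' is not a missing computation but a false assertion.

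Concretely, take $d=2$, $m_1 = \ol{0.1\ 0.2}$, $m_2 = \ol{0.15\ 0.25}$; both have length $0.1 \le \tfrac13$ and they cross. The majors are $M_1 = \ol{0.1\ 0.55}$, $M_2 = \ol{0.6\ 0.05}$, $N_1 = \ol{0.125\ 0.575}$, $N_2 = \ol{0.625\ 0.075}$. Here $N_1$ crosses $M_1$ with one endpoint $x = 0.125 \in H(M_1)$ and the other endpoint $y = 0.575$ in the between-arc $(0.55,0.6)$ of $C(\mathcal{L}_1)$ --- exactly your configuration, with $N_2$ crossing $M_2$ giving the full count of $d=2$ crossings. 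But both endpoints of $m_2 = \ol{0.15\ 0.25}$ lie in $(0.125,0.575)$ and both lie in $(0.075,0.625)$, so $m_2$ crosses neither $N_1$ nor $N_2$, and no rotate of $N_1$ separates $\sigma_2(x)=0.25$ from $\sigma_2(y)=0.15$. Of course this $m_1$ is not an actual minor ($\sigma_2^2(m_1)=\ol{0.4\ 0.8}$ and $\sigma_2^4(m_1)=\ol{0.6\ 0.2}$ cross), but that is precisely the point: the only hypotheses your contradiction step actually uses are the length bound $|m_i|\le\tfrac{1}{d+1}$ and the rotational symmetry of the sibling majors, and those do not rule out the configuration above. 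Any correct proof must bring in dynamical information about the whole forward orbit --- in the paper, this enters through Corollary~\ref{Corr:CentralStrip} (no image of $m_i$ re-enters the central strip) plus no-wandering-triangles and transitivity --- and a single-pullback picture simply does not encode it.
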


\begin{proof}
Let $\mathcal{L}_1\ne \mathcal{L}_2$ be two laminations with
\emph{minors} $m_1$ and $m_2$. We claim that $m_1$ and $m_2$ do not
cross. Indeed, suppose otherwise, and choose a non-periodic point $x\in
\uc$ located behind both $m_1$ and $m_2$. The $\si_d$-preimage of the
point $x$ is an all-critical $d$-gon $D_x$. By Theorem \ref{prop:sat},
there exists an invariant lamination $\mathcal{L}'$ containing $D_x$.
By Corollary \ref{Corr:CentralStrip} and since $x$ is not periodic,
$\mathcal{L}'$ is compatible with $m_1$ and $m_2$ so that $m_1$ and
$m_2$ are diagonals of a gap $G_0$ of $\mathcal{L}'$. By Lemma
\ref{triangles:wander}, $G_0$ is a either pre-critical or pre-periodic.
It cannot be pre-critical because $m_1$ and $m_2$ cannot enter their
respective central strips. Thus, $G_0$ is pre-periodic. Choose $j\in
\mathbb{N}$ such that $G = \sigma_d^j(G_0) $ is periodic and
$\sigma_d^j(m_1)$ and $\sigma_d^j(m_2)$ are diagonals of $G$. By Lemma
\ref{transitivity}, the gap $G$ is finite and the first return map
permutes the sides of $G$ transitively. Hence, some forward image of
$m_1$ under $\sigma_d$ will intersect $m_1$ and some forward image of
$m_2$ under $\sigma_d$ will intersect $m_2$, a contradiction.
\end{proof}

Theorem \ref{t:uml} follows from Theorem \ref{minors:dontcross} and
Theorem \ref{t:space}.

\begin{thm}\label{t:uml}
The space of all $\si_d$-invariant unicritical laminations is compact.
The set of all their minors 
is a lamination.
\end{thm}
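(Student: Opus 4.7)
The plan is to deduce both assertions directly from the two cited inputs: the compactness of the space of sibling $\si_d$-invariant laminations (Theorem \ref{t:space}) and the pairwise non-crossing of minors of distinct unicritical laminations (Theorem \ref{minors:dontcross}).

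First I would show that the class of unicritical laminations is a closed subspace of the compact space from Theorem \ref{t:space}. Let $\lam_i$ be unicritical laminations converging in the Hausdorff metric to a sibling invariant lamination $\lam$. Every $\lam_i$ is $d$-fold rotationally symmetric (since $\Cc(\lam_i)$ is, as noted just after Definition \ref{unicritical}), so $\lam$ inherits this symmetry. The $d$ sibling majors $M_{i,1}, \ldots, M_{i,d}$ of $\lam_i$ all share a common length in $[1/(d+1), 1/2]$; after extracting a subsequence they converge to leaves $M_1, \ldots, M_d$ of $\lam$ of equal common length in that same interval. These $M_j$ form either $d$ sibling majors of $\lam$ (bounding its unique critical set) or, if their common length tends to $1/d$, a regular $d$-gon of critical leaves (which is then the critical set of $\lam$). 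In either case, the uniqueness clause in Lemma \ref{length:function} rules out a second group of $d$ sibling majors, and the preserved $d$-fold symmetry rules out a second critical set; hence $\lam$ is unicritical.

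For the second assertion, let $\M$ consist of all minors of unicritical laminations together with every point of $\uc$ (as a degenerate leaf). By Theorem \ref{minors:dontcross}, distinct non-degenerate elements of $\M$ are unlinked, so $\M$ is a prelamination. To see that $\M^+$ is closed in $\complex$, take non-degenerate leaves $m_i = m(\lam_i) \in \M$ with $m_i \to \ell$; by the compactness just proved, pass to a subsequence so that $\lam_i \to \lam$ with $\lam$ unicritical. The corresponding majors satisfy $M_{i,1} \to M_1 \in \lam$ after further subsequencing, and continuity of the linear extension $\si_d^*$ yields
\[
\ell = \lim_i m_i = \lim_i \si_d^*(M_{i,1}) = \si_d^*(M_1) = m(\lam) \in \M.
\]
A degenerate limit $\ell$ lies automatically in $\uc \subset \M$, completing the verification.

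The principal obstacle I expect is exactly the first step: a Hausdorff limit of $d$-fold symmetric critical gaps can degenerate (for instance to a regular $d$-gon of critical leaves when $|m_i| \to 0$ and $|M_{i,j}| \to 1/d$, or through collapse of vertex clusters), and one must exclude the emergence of a second critical set in the limit. The twin facts that $d$-fold rotational symmetry is preserved under Hausdorff limits and that the group of $d$ sibling majors is unique (Lemma \ref{length:function}) are precisely what prevents this pathology and make both assertions of the theorem follow cleanly from Theorems \ref{t:space} and \ref{minors:dontcross}.
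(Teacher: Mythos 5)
Your proposal is correct and takes essentially the same approach as the paper: both establish compactness by showing unicritical laminations form a closed subspace of the sibling-invariant laminations from Theorem~\ref{t:space}, and both obtain the lamination property from the non-crossing result (Theorem~\ref{minors:dontcross}) together with the compactness just established to close up the set of minors. You supply more detail than the paper on why a Hausdorff limit of unicritical laminations remains unicritical (the paper simply asserts the critical sets converge to a $d$-to-$1$ critical set, while you invoke $d$-fold symmetry and the uniqueness of the group of sibling majors from Lemma~\ref{length:function}), but this is a refinement of the same argument, not a different route.
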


\begin{proof}
By Theorem \ref{t:space} if a sequence of $\si_d$-invariant unicritical
laminations converges then the limit is a $\si_d$-invariant
laminations. Since the limit of the sequence of their critical sets
then converges to a set that maps forward in the $d$-to-$1$ fashion, it
follows that the limit lamination is unicritical. This implies the
first claim of the theorem. Now, by Theorem \ref{minors:dontcross} it
is remains to prove that minors of $\si_d$-invariant unicritical
laminations form a closed family of chords. Indeed, if a sequence of
minors converges to a chord $\ell$, then we can choose a subsequence so
that the corresponding $\si_d$-invariant unicritical laminations
converge too. Their limit lamination has $\ell$ has its minor and is,
by the first claim of the theorem, unicritical. This proves the
theorem.
\end{proof}

The next definition is similar to Thurston's definition of $\qml$.

\begin{defn}\label{d:uml}
The set of all chords in the $\disk$ which are minors of some
$\si_d$-invariant unicritical lamination is a lamination called the
\emph{Unicritical Minor Lamination of degree $d$} and denoted by
$\uml_d$.
\end{defn}

\section{Basic properties of $\uml_d$}\label{Properties:umld}

We begin by studying the criterion under which a non-degenerate chord
$m$ drawn in $\disk$  or a point on $\uc$ can be the minor of some
unicritical lamination. By Lemma \ref{length:function} we know that
$|m|\le \frac{1}{d+1}$; this is a necessary condition for $m$ being a
minor. However it is not sufficient (e.g., $m$ may have forward images
that cross). Thus, to establish the desired criterion we need more
conditions on iterations of $m$.
Consider pullbacks of $m$ of length at most $\frac{1}{d}$ and call them
\emph{`short' pullbacks}. `Short' pullbacks of a chord may well be
longer than the chord itself. There are $2d$ `short' pullbacks of $m$;
they are edges of a well-defined $2d$-gon. Observe that if $|m|\le
\frac{1}{d+1}$, then there are $d$ `short' pullbacks of $m$ that are of
length $\frac{|m|}d<|m|$ and $d$ `short' pullbacks of $m$ that are of
length $\frac{1-|m|}d\ge |m|$ (in the latter case equality means that
$|m|=\frac{1}{d+1}$). This observation follows from the properties of
the length function $\psi$ described earlier. Given a chord $m=\ol{ab}$
with $|m|\le \frac{1}{d+1}$, set $D(m)$ to be the convex hull of
$\si_d^{-1}(\{a, b\})$.

\begin{lem}\label{condition:minor}
The following conditions form a criterion for a non-degenerate leaf $m$
to be a minor of unicritical lamination:

\begin{enumerate}

\item $|m|\le \frac{1}{d+1}$;

\item $m$ and its forward images do not cross;

\item no image of $m$ is shorter than $m$;

\item If $M_1$,$\dots$, $M_d$ are $d$ `short' pullbacks of $m$ of
    length greater  than $\frac{1}{d+1}$, then forward images of
    $m$ do not cross $M_1$,$\dots$, $M_d$.

\end{enumerate}

Moreover, Thurston's pullback lamination $\lam(m)$ based on $m$ and
$M_1, \dots, M_d$, has $m$ as its minor.
\end{lem}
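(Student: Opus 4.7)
My plan is to prove necessity and sufficiency separately, with the bulk of the work in the sufficiency direction.

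\emph{Necessity.} Suppose $m$ is the minor of a unicritical lamination $\lam$ with majors $M_1,\dots,M_d$. Condition (1) is Lemma~\ref{length:function}. Since every $\sigma_d^i(m)$ and every $M_i$ is a leaf of $\lam$, and leaves of a lamination are pairwise non-crossing, (2) and (4) follow. For (3), suppose $|\sigma_d^j(m)|<|m|$ for some minimal $j\ge 1$ and set $x=|\sigma_d^{j-1}(m)|\ge|m|$. Inspecting the sawtooth length function $\psi$, the inequality $\psi(x)<|m|$ combined with $x\ge|m|$ and $|m|\le 1/(d+1)$ forces $x>(1-|m|)/d=|M_i|$, contradicting the maximality of the majors.

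\emph{Sufficiency: construction.} Assume $m=\overline{ab}$ satisfies (1)--(4) and set $D(m)=\ch(\sigma_d^{-1}(\{a,b\}))$; this $2d$-gon has boundary alternating $d$ majors $M_i$ (length $(1-|m|)/d$) with $d$ short pullbacks (length $|m|/d$), and by its $d$-fold symmetry it is the intended critical gap. I construct $\lam(m)$ by Thurston-style pullback. Let $\lam_0$ consist of $\sigma_d^i(m)$ for all $i\ge 0$ together with the $2d$ edges of $D(m)$; this collection is pairwise non-crossing because (2) and (4) prevent iterates of $m$ from crossing the $M_i$'s, the argument from necessity (3) shows iterates cannot enter the interior of $C(M_1)\supset D(m)$, and the edges of $D(m)$ are mutually non-crossing by symmetry. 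Define $\lam_{n+1}$ inductively by adjoining, for every $\ell\in\lam_n$, the unique $d$-tuple of disjoint $\sigma_d$-preimages of $\ell$ lying in the $d$ pockets of $\cdisk\setminus D(m)$ corresponding to the component of $\cdisk\setminus m$ containing $\ell$. Set $\lam(m)=\overline{\bigcup_n\lam_n}$.

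\emph{Sufficiency: verification.} The principal technical task is to show that the iterative pullbacks remain pairwise non-crossing. The key observation is that each component of $\cdisk\setminus D(m)$ maps injectively under $\sigma_d^*$ onto a component of $\cdisk\setminus m$; hence preimages of two disjoint leaves are disjoint within each pocket by injectivity, and across pockets by the $d$-fold rotational symmetry. The Central Strip Lemma~\ref{Central:strip}, together with (3) and (4), ensures that no pullback ever re-enters $C(M_1)$, so $D(m)$ remains the unique critical set and $\lam(m)$ is unicritical. Closedness holds by construction, and sibling $\sigma_d$-invariance is built into the inductive step since the uniquely chosen $d$-tuple of preimages always includes the given leaf among its siblings; forward invariance is immediate from $\sigma_d(\lam_{n+1})\subset \lam_n$. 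The critical set is $D(m)$, the majors are the $M_i$, and by (1) and Lemma~\ref{length:function} the minor is $\sigma_d(M_1)=m$, completing the argument. The hardest step is the non-crossing bookkeeping at each pullback level; everything else is standard Thurston-type verification.
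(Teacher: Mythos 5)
Your proposal follows essentially the same route as the paper: necessity via Lemma~\ref{length:function} and non-crossing of leaves, sufficiency by forming the $2d$-gon $D(m)$, checking that the forward orbit of $m$ together with the edges of $D(m)$ is a forward-invariant prelamination, and then applying Thurston's pullback construction. Two small remarks. First, for the necessity of (3) you give a direct sawtooth computation showing $|\sigma_d^{j-1}(m)|>(1-|m|)/d=|M_i|$, whereas the paper instead cites Corollary~\ref{Corr:CentralStrip}; your version is correct and slightly more self-contained. Second, in the sufficiency direction your key claim that ``the argument from necessity (3) shows iterates cannot enter the interior of $C(M_1)$'' is a bit of a hand-wave: that necessity argument presupposed a given lamination with $M_i$'s as honest majors, which you do not yet have. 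The paper closes this step cleanly by partitioning $\uc\sm D(m)$ into the outer arcs $E_i$ (behind the $M_i$) and the inner arcs $E'_i$ (behind the short edges $M'_i$), noting that by (4) an endpoint of $\sigma_d^j(m)$ in some $E'_s$ forces the other endpoint into some $E'_{s'}$, whence $\sigma_d^{j+1}(m)$ lies in the small piece $T_0$ and is shorter than $m$, contradicting (3). You should spell this out (or equivalently apply Lemma~\ref{Central:strip} to $M_1$ as a bare chord and combine with (3)) rather than appeal to the necessity computation. With that repair the proposal is correct and matches the paper's argument.
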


\begin{proof}
First we show that if $m$ is the minor of a unicritical lamination
$\lam$, then it has the listed properties. Indeed, by Lemma
\ref{length:function}, $|m|\le \frac{1}{d+1}$. Moreover, by properties
of laminations, $m$ and its forward images do not cross. Consider $d$
`short' pullbacks $M_1$, $M_2$,$\dots$, $M_d$ of $m$ of length greater
than $\frac{1}{d+1}$. Then, again by properties of laminations, forward
images of $m$ do not cross $M_1$, $M_2$,$\dots$, $M_d$. Finally, no
image of $m$ is shorter than $m$ by Corollary \ref{Corr:CentralStrip}.

On the other hand, we claim that if $m$ has the listed properties then
$m$ is the minor  of some unicritical lamination. Indeed, consider the
$2d$-gon $D(m)$; it has $d$ edges $M_1, M_2, \dots, M_d$ of length
$\lambda=\frac{1-|m|}d\ge \frac{1}{d+1}$ and $d$ edges $M'_1, M'_2,
\dots, M'_d$ of length $\frac{|m|}d<|m|$. The leaf $m$ cuts $\disk$
into two pieces: a small piece $T_0$ and a large piece $T_1$. The
$2d$-gon $D(m)$ cuts $\uc$ in $2d$ arcs: holes $E_1, \dots, E_d$ of $T$
behind $M_1, \dots, M_d$, and holes $E'_1, \dots, E'_d$ behind $M'_1,
\dots, M'_d$. Clearly, this entire picture is symmetric with respect to
the rotation by $\frac{1}d$. Observe that $E_1, \dots, E_d$ map in the
homeomorphic fashion to $T_1$ by $\si_d$ while $E'_1, \dots, E'_d$ map
by $\si_d$ to $T_0$ in the homeomorphic fashion.

We claim that chords $\si^j_d(m)$ never enter sets $E'_i$. Indeed,
otherwise the assumption  that images of $m$ do not cross chords $M_1,
\dots, M_d$ implies that for some $j$ both endpoints of $\si^j_d(m)$
belong to sets $E'_1, \dots, E'_d$ and hence $\si_d^{j+1}(m)\subset
T_0$, a contradiction with the assumption that no image of $m$ is
shorter than $m$. Thus, the forward orbit of $m$ together with $T$
forms a forward invariant lamination $\mathcal{L}_0$. Using Thurston's
pullback construction and relying upon the unicriticality, we now
construct a desired invariant lamination $\mathcal{L}$ (once we know
that $\mathcal{L}_0$ is forward invariant the pullback construction
always goes through).
\end{proof}

\begin{defn}\label{d:lamm}

If a non-degenerate chord  $m$ satisfies the above condition from Lemma
\ref{condition:minor},  then we will call the lamination
$\mathcal{L}(m)$ constructed in Lemma \ref{condition:minor} and having
$m$ as the minor as the \emph{lamination generated by $m$} or the
\emph{lamination corresponding to the minor $m$}. The notation
$\mathcal{L}(m)$ will be used from now on for that lamination.

\end{defn}

\begin{lem}\label{l:partialorder}
The following claims hold.

\begin{enumerate}

 \item If $n\in \uml_d$, $m\in \lam(n)$ is a leaf, $n\succ m$, and
     there exists no $i$ such that $n\succ \si_d^i(m)\succ m$ then
$m\in \uml_d$.

 \item If $m, n\in \uml_d$ with $m\succ n$, then $m$ is a leaf of
     $\mathcal{L}(n)$.

\end{enumerate}
\end{lem}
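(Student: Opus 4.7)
The plan is to prove (1) by verifying the four conditions of Lemma \ref{condition:minor} for the chord $m$, which then places $m \in \uml_d$, and to prove (2) by showing that $m$ is actually a leaf of the pullback lamination $\lam(n)$.

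For (1), condition (1) of Lemma \ref{condition:minor} is immediate: $n \succ m$ implies $H(m) \subseteq H(n)$, so $|m| \le |n| \le \frac{1}{d+1}$. Condition (2) is also immediate because $m$ and all of its forward iterates are leaves of the $\sigma_d$-invariant lamination $\lam(n)$, hence pairwise non-crossing. The main work lies in establishing (3) and (4). I would argue by contradiction: suppose some iterate $\sigma_d^i(m)$ is strictly shorter than $m$ (violating (3)) or crosses a long pullback $M_k$ of $m$ (violating (4)). Since $\sigma_d^i(m) \in \lam(n)$ cannot cross either $n$ or $m$, it lies either entirely outside $\mathcal{R}(n)$, strictly inside $\mathcal{R}(m)$, or in the annular region $\mathcal{R}(n) \setminus (\mathcal{R}(m) \cup \{m\})$. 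Lemma \ref{oscillating} ties position to length: any leaf with length $\ge |n|$ must lie in the annulus $C(n) \setminus C(\lam(n))$, hence outside $\mathcal{R}(n)$. Combining this with the shortness or crossing assumption and a careful look at how an iterate of $m$ is forced to travel across $M_k$ or remain short in $\mathcal{R}(n)$, I would show that $\sigma_d^i(m)$ must in fact lie in the annular region and enclose $m$, yielding $n \succ \sigma_d^i(m) \succ m$ and contradicting the hypothesis. Once (1)--(4) hold, Lemma \ref{condition:minor} produces $\lam(m)$ with $m$ as its minor, so $m \in \uml_d$.

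For (2), given $m, n \in \uml_d$ with $m \succ n$, the strategy is twofold. First, show $m$ is compatible with $\lam(n)$: by Theorem \ref{minors:dontcross}, $m$ does not cross $n$, and using $\sigma_d$-invariance together with the fact that every leaf of $\lam(n)$ is an iterated pullback of a forward iterate of $n$, I would propagate the non-crossing property through the pullback construction, invoking the Central Strip Lemma to rule out crossings when $m$ and an iterate of $n$ end up on opposite sides of critical chords. Second, show $m$ is not merely a diagonal of a gap of $\lam(n)$: because $m$ is itself the minor of $\lam(m)$, with its own majors $M_k$ and critical $2d$-gon $D(m)$, the pullback structure required by $m$ being a minor is incompatible with $m$ sitting as an internal chord of a (pre)periodic or (pre)critical gap of $\lam(n)$ (using Lemmas \ref{triangles:wander} and \ref{transitivity} to classify gaps of $\lam(n)$), so $m$ must appear as a genuine leaf.

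The main obstacle in (1) is the delicate case analysis for (3) and (4): one must keep careful track of which regions an iterate can occupy based both on its length (via Lemma \ref{oscillating}) and on the non-crossing constraints with $n$, $m$, and the short/long pullbacks, in order to force the forbidden configuration $n \succ \sigma_d^i(m) \succ m$. In (2), the main obstacle is the second step, upgrading compatibility with $\lam(n)$ to actual leafhood, which requires the gap-classification machinery to exclude the diagonal-of-a-gap scenario.
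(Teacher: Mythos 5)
Your proposal matches the paper's approach in both parts: for (1) you verify the four conditions of Lemma \ref{condition:minor} with conditions (1)--(2) immediate, condition (3) from Lemma \ref{oscillating}, and condition (4) by showing that a crossing of the long pullbacks $M_k$ would force some iterate $\sigma_d^i(m)$ to sit between $m$ and $n$, contradicting the hypothesis; for (2) you show compatibility of $m$ with $\lam(n)$ via the Central Strip Lemma and then exclude the diagonal-of-a-gap scenario via Lemmas \ref{triangles:wander} and \ref{transitivity}. This is the same decomposition and the same auxiliary lemmas as the paper's proof.
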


\begin{proof}
(1) To prove that $m \in \uml_d$, we show that $m$ satisfies the three
conditions of Lemma \ref{condition:minor} one by one. Under iteration
of $\si_d$, the length of $m$ will continue to increase until it is at
least $\frac{1}{d+1}$ and after that by Lemma \ref{oscillating} the
image leaves cannot become shorter than $n$. Since, all leaves on the
forward orbit of $m$ are contained in $\mathcal{L}(n)$, the first two of the
four hypothesis of Lemma \ref{condition:minor} are satisfied.

Let $M_1,M_2,\dots M_d$ be $d$ leaves of length at least
$\frac{1}{d+1}$ that are  immediate preimages of $m$( these would be
the major leaves of the lamination corresponding to $m$). They are
longer than the major leaves of $\mathcal{L}(n)$ and thus not leaves of
$\mathcal{L}(n)$. In order to be able to apply Lemma
\ref{condition:minor} we need to show that no leaf in $\mathcal{L}(n)$
can intersect the interiors of the leaves $M_1,M_2,\dots M_d$. Since,
no leaf in $\mathcal{L}(n)$ could intersect the major leaves of
$\mathcal{L}(n)$, a leaf in $\mathcal{L}(n)$ could intersect
$M_1,M_2,\dots M_d$ only if its length is at most $\frac{1}{d}$ times
the length of $n$.

If a leaf $m'$ in the forward orbit of $m$ is longer than
$\frac{1}{d+1}$  (but shorter than the major leaves of
$\mathcal{L}(n)$), the image leaf is shorter than $m'$, but at
least as long as $n$ by Lemma \ref{oscillating}; from then on, no leaf
in the orbit of $m'$ can intersect the interior of $M_1,M_2,\dots M_d$.
Thus, a leaf in the orbit of $m$ can intersect the interior of $M_1,M_2,\dots
M_d$ only during the initial iterations while the length
of the leaf increases by a factor of $d$. If this does not
happen, then by Lemma \ref{condition:minor}, $m$ is the minor of an
invariant lamination and thus contained in $\uml_d$. Clearly, $n\succ
m$.

(2) We will prove the claim for the case $n$ is non-degenerate. The
proof for the degenerate case is similar. We first show that $m$ does
not intersect any leaf in the lamination  $ \mathcal{L}(n)$. If
$\mathcal{C}(n)$ and $\mathcal{C}(m)$ are the central strips defined by
the majors of the laminations $\mathcal{L}(n)$ and $ \mathcal{L}(m)$
respectively  then $\mathcal{C}(n)\subset \mathcal{C}(m)$. By Lemma
\ref{Central:strip} no image of $m$ crosses a major of
$\mathcal{L}(n)$. By construction, no image of $m$ crosses a leaf of
$\mathcal{L}(n)$. 	

We claim that $m$ cannot divide a gap in $\mathcal{L}(n)$. Since, the
minor  $n$ is non-degenerate, so $\mathcal{L}(n)$ has a critical
$2d$-gon $G$. Each gap in $\mathcal{L}(n)$ is either an eventually
periodic polygon or a gap which eventually maps to $G$ (this follows
from the absence of wandering triangles established in Lemma
\ref{triangles:wander}). However, by Lemma \ref{Central:strip} no image
of $m$ can divide $G$. On the other hand, if $m$ divides a pre-periodic
polygon, then, by Lemma \ref{transitivity}, two forward images of $m$
will intersect which is a contradiction. Thus, $m$ cannot divide a gap
in $\mathcal{L}(n)$. Therefore, $m$ is a leaf in $\mathcal{L}(n)$.
\end{proof}

Given a non-diameter chord $\ell$, let $A(\ell)$ be the set (possibly empty) of all points that never exit $\ol{H(\ell)}$. Given an arc $I$ we denote by $|I|$ its length
and that we normalize the circle length so that it equals $1$.

\begin{lem}\label{l:fxpt}
Suppose that $\ell=\ol{yz}$ is a chord of length at most $\frac{1}{d}$ such that a fixed point $a$ belongs to $[y, z]$. Then the following holds:

\begin{enumerate}

\item  The only way $\si_d(y)$ and $\si_d(z)$ can belong to $[y, z]$ is when one of these points equals $a$ and $|\ell|=\frac{1}{d}$.

\item The unique minimal invariant set contained in $[y, z]$ is $\{a\}$.

\item If $m$ is a minor with a fixed endpoint $a$ then $m=\{a\}$.

\end{enumerate}

\end{lem}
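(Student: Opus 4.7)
The plan is to treat the three parts in order, exploiting that $a$ is fixed and $\sigma_d$ expands arc length locally by a factor $d$ near $a$. Throughout I parametrize the arc $[y,z]$ by writing $y = a - s$ and $z = a + r$ with $s, r \ge 0$ and $s + r = |\ell| \le 1/d$, so that $\sigma_d(y) = a - ds$ and $\sigma_d(z) = a + dr$, read modulo $1$. This single identity drives (1) and (2); (3) needs one further explicit comparison with a major pullback of the purported minor.

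For (1), I will show that $\sigma_d(y) \in [y,z]$ forces either $s = 0$ (when $a - ds \in [y,a]$) or the wraparound inequality $ds + r \ge 1$ (when $a - ds$ reappears in $[a,z]$ after crossing $a$ the long way around the circle); symmetrically $\sigma_d(z) \in [y,z]$ forces $r = 0$ or $dr + s \ge 1$. If both $s, r > 0$, adding the two wraparound inequalities gives $(d+1)(s+r) \ge 2$, hence $s + r \ge 2/(d+1) > 1/d$, contradicting $s + r \le 1/d$. The only remaining possibility is $s = 0$ combined with $dr \ge 1$ (forcing $r = 1/d$, hence $y = a$ and $|\ell| = 1/d$), or the symmetric case $r = 0, s = 1/d$. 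For (2), $\{a\}$ is obviously a minimal invariant set, so it suffices to show any minimal $M \subseteq [y,z]$ is $\{a\}$: since $M \subseteq A(\ell)$, the same expansion analysis shows that in the generic case (no wraparound is possible) $A(\ell) = \{a\}$, while in the degenerate case $|\ell| = 1/d$ with an endpoint at $a$ (say $z = a$) a direct accounting gives $A(\ell) = \{a\} \cup \{a - 1/d^k : k \ge 1\}$, each of whose points eventually lands on $a$; so any invariant $M \subseteq A(\ell)$ must contain $a$, and minimality forces $M = \{a\}$.

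For (3), the argument departs from pure expansion estimates. Suppose for contradiction that $m = \overline{ab}$ is a non-degenerate minor with $a$ fixed; set $t := |m| \le 1/(d+1) < 1/d$ and, after flipping orientation, take $b = a + t$. Because $a$ is fixed, the preimages of $a$ under $\sigma_d$ are $\{a + k/d : 0 \le k \le d-1\}$, and those of $b$ are $\{a + t/d + k/d : 0 \le k \le d-1\}$; since $0 < t/d < 1/d$, these $2d$ points alternate in the cyclic order $a,\, a + t/d,\, a + 1/d,\, a + 1/d + t/d,\, \ldots$ around $\uc$. The chord $M' := \overline{a + t/d,\, a + 1/d}$ is therefore a short pullback of $m$ of length $(1-t)/d \ge 1/(d+1)$, and thus by Lemma~\ref{length:function} one of the $d$ majors of any unicritical lamination whose minor is $m$. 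But the four relevant endpoints appear in the cyclic order $a,\, a + t/d,\, a + t,\, a + 1/d$ on $\uc$, strictly interlacing the endpoints of $m$ and $M'$, so these two chords cross---contradicting that leaves of a lamination are pairwise unlinked. The main subtlety, and the principal obstacle, is confirming that $M'$ is really forced into the lamination as a major: this relies on $a$ itself being one of the $d$ preimages of $a$, which places a preimage of $b$ at distance $t/d$ strictly inside the short arc $H(m)$ and thereby forces one of the long short pullbacks to straddle $m$.
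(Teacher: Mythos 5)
Your proof is correct and follows essentially the same route as the paper's: an arc-length expansion estimate on $[y,z]$ for parts (1) and (2), and for part (3) the same ``pullback crosses the minor'' contradiction. The paper phrases (3) by taking the unique $\sigma_d$-preimage $b'$ of $b$ inside $H(m)$ and observing that the major of $\lam$ with endpoint $b'$ must either cross $m$ or lie under $m$; your identification of $M'=\overline{a+t/d,\,a+1/d}$ as that major and the explicit cyclic-order check is the same argument spelled out in coordinates.
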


\begin{proof}
For $0<[a, z]=\lambda<\frac{1}{d}$ we have
$|[a, \si_d(z)]|=d\lambda<1-(\frac{1}{d}-\lambda)<1-|[y, a]|$
which implies that unless one of points $y, z$ equals $a$ and $|\ell|=\frac{1}{d}$, all points of $[y, z]$ except $a$ are eventually mapped outside $[y, z]$. This shows that,  except in the case when $|\ell|=\frac{1}{d}$ and either $y$ or $z$ equals $a$, the set $A(\ell)$ equals $\{a\}$ and claims (1) and (2) are immediate. In the remaining case (1) clearly holds, and $A(\ell)$ consists of $a$ and a countable family of pullbacks of the other endpoint of $\ell$ towards $a$. Still, it follows that (2) holds in this case too. To prove (3),
let $m=\ol{ab}$ be the non-degenerate minor of a lamination $\lam$.
Choose $b'\in H(m)$ with $\si_d(b')=b$. Then the major of $\lam$ with
an endpoint $b'$ must cross $m$ or be located ``under'' $m$, a
contradiction.
\end{proof}

If $m$ is the minor of a lamination $\lam$ which is not a fixed point,
then we denote by $M_m$ the major of $\lam$ such that $m\subset
\ol{H(M_m)}$ (if $m$ is a fixed point then $M_m$ is not well-defined as there are two majors with the above property).

\begin{lem}\label{l:minor-fxpt}
Let $M=\ol{yz}$ be a chord with $|M|\le \frac{1}{d}$ such that $M\succ \si_d(M)=m$. Then there are two cases.

\begin{enumerate}

\item $M$ is a chord of length $\frac{1}{d}$ with a fixed endpoint.

\item Otherwise $y\le \si_d(z)\le \si_d(y)\le z$, $\si_d([y, z])=[\si_d(y), \si_d(z)]$,
$\si_d(\ell)\in [y, z]$ for any critical chord $\ell\succ M$ with
$|\ell|=\frac{1}{d}$, and the arc $[z-\frac{1}{d}, y+\frac{1}{d}]=I$
contains no fixed points (so, $\ol{H(M)}$ and $\ol{H(m)}$
contain no fixed points).
\end{enumerate}

In particular, this holds if $m$ is a minor and $M=M_m$.
\end{lem}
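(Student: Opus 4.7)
Set $t = |M|$ and write $M = \ol{yz}$ so that $[y,z] = \ol{H(M)}$ has length $t \le \tfrac{1}{d}$; the hypothesis $M \succ \si_d(M)$ is equivalent to the two points $\si_d(y), \si_d(z)$ both lying in $[y,z]$. If $t = \tfrac{1}{d}$, then $M$ is critical, $\si_d(y) = \si_d(z) \in [y,z]$, and the dichotomy is immediate: if $y$ or $z$ is fixed we are in case (1); otherwise each assertion of (2) reduces to a direct check (with equalities where appropriate, and $I = [y,z]$, so Lemma~\ref{l:fxpt}(1) forbids an interior fixed point since no endpoint is fixed).

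The main work is the subcritical case $t < \tfrac{1}{d}$. Here $\si_d$ is a homeomorphism from $[y,z]$ onto the anticlockwise arc from $\si_d(y)$ to $\si_d(z)$ of length $dt > t$. Since both endpoints of that image arc lie in $[y,z]$ but the arc itself is too long to be contained in $[y,z]$, it must exit through $z$, traverse $\uc \sm [y,z]$, and re-enter through $y$. Reading off this configuration gives the order $y \le \si_d(z) \le \si_d(y) \le z$ (claim (a)) and, by arc-length bookkeeping, $\si_d(y) - \si_d(z) = 1 - dt$; the wrap-around also forces $1 - dt \le t$, i.e., $t \ge \tfrac{1}{d+1}$. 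Claim (b) is then the description of how $\si_d$ acts on this arc.

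For (c), a critical chord $\ell = \ol{u(u + \tfrac{1}{d})}$ with $\ell \succ M$ forces $[y,z] \subset [u, u + \tfrac{1}{d}]$, i.e., $u \in [z - \tfrac{1}{d}, y]$. As $u$ traverses this interval of length $\tfrac{1}{d} - t$, the image $\si_d(u)$ traces the circular arc from $\si_d(z)$ anticlockwise to $\si_d(y)$, which has length $1 - dt \le t$ and whose endpoints lie in $[y,z]$; the whole arc therefore lies in $[y,z]$, yielding $\si_d(\ell) \in [y,z]$. For (d), assume $a \in I = [z - \tfrac{1}{d}, y + \tfrac{1}{d}]$ is a fixed point: a fixed point in $[y,z]$ is ruled out by Lemma~\ref{l:fxpt}(1) and the case (2) assumption; a fixed point $a \in [z - \tfrac{1}{d}, y)$ yields a critical chord $\ol{a(a+\tfrac{1}{d})} \succ M$ whose image is $a$, contradicting (c) since $a < y$; and the case $a \in (z, y + \tfrac{1}{d}]$ is symmetric. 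The final ``in particular'' statement is immediate: the major $M_m$ satisfies $|M_m| = (1 - |m|)/d \le \tfrac{1}{d}$ and $M_m \succ m$ by definition. The hardest step is the wrap-around analysis in the second paragraph, which simultaneously produces order (a), the bound $t \ge \tfrac{1}{d+1}$, and the arc-length identity that drives (c) and (d).
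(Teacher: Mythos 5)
Your proof is correct and follows essentially the same route as the paper's: invoke Lemma~\ref{l:fxpt} to isolate the fixed-endpoint case, then use orientation-preservation, arc-length bookkeeping, and the rotating-critical-chord argument to establish the four assertions of case (2). Your initial dichotomy is organized by $|M|$ rather than by the presence of a fixed point in $\ol{H(M)}$, but by Lemma~\ref{l:fxpt}(1) these two splits are equivalent, and your fuller arc-length computations (producing $\si_d(y)-\si_d(z)=1-dt$ and $t\ge\frac{1}{d+1}$) merely make explicit what the paper leaves to the reader.
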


\begin{proof}
(1) For a fixed point in $\ol{H(M)}$ the claim follows by Lemma \ref{l:fxpt}.

(2) Suppose that no fixed point belongs to $\ol{H(M)}$. Then
the first two claims follow from the fact that the image of $H(M)$ is an
arc of length at most $1$ but greater than $|H(M)|$.  Now, since
$\si_d$ preserves orientation we see that as we rotate a critical chord
$\ell$ from $\ol{y (y+\frac{1}{d})}$ to $\ol{(z-\frac{1}{d}) z}$, the
point $\si_d(\ell)$ moves along the circle in the negative direction
from $\si_d(y)$ to $\si_d(z)$ which proves the third claim. To prove
the last claim, observe that if a fixed point $a$ belongs to
$[z-\frac{1}{d}, y+\frac{1}{d}]$, then there exists a critical chord
$\ell_a$ of length $\frac{1}{d}$ with both endpoints in $I$ such that
$\ell\succ M$. Since by the second claim $\si_d(\ell_a)\in [y, z]$,
then $a$ coincides with one of the endpoints of $M$, a contradiction.
\end{proof}

For a point $x\in \uc$, let $D_x$ be the convex hull of the full
preimage of $x$ under $\si_d$; clearly, $D_x$ is an all-critical
$d$-gon.

\begin{lem}\label{degenerate:point}
Every $x\in \uc$ is the minor of a unicritical lamination.
\end{lem}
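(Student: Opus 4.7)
The plan is to show that $\{x\}$ is realized as the minor of some $\sigma_d$-invariant unicritical lamination, by extending the construction of Lemma \ref{condition:minor} to the degenerate case $m = \{x\}$. When the candidate minor collapses to a single point, the $2d$-gon $D(m) = \ch(\sigma_d^{-1}(\{a, b\}))$ from that proof collapses onto the all-critical $d$-gon $D_x = \ch(\sigma_d^{-1}(x))$: its $d$ short edges collapse to the $d$ vertices of $D_x$, and the $d$ long edges become the critical chords of length $1/d$ that form the boundary of $D_x$. These critical chords play the role of the majors of the desired lamination $\lam_x$.

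First I would form the initial prelamination $\lam_0$ consisting of all points of $\uc$ (as degenerate leaves) together with $D_x$ (viewed as a critical polygon with its $d$ critical boundary edges). This $\lam_0$ is forward $\sigma_d$-invariant: each critical edge maps to the degenerate leaf $\{x\}$, and degenerate leaves map to degenerate leaves. Following the same application of Theorem \ref{prop:sat} as in the proof of Theorem \ref{minors:dontcross} (where a lamination containing the all-critical $d$-gon $D_x$ was produced), I would saturate $\lam_0$ to obtain a $\sigma_d$-invariant lamination $\lam_x$. To fit the literal ``no critical gaps'' hypothesis of Theorem \ref{prop:sat}, one can drop a single edge of $D_x$ from $\lam_0$ initially; a direct computation analogous to the $d = 3$ case shows the enlarged gap is not critical, since for each $y \in \uc \setminus \{x\}$ the fiber $\sigma_d^{-1}(y)$ has exactly one point in the exposed arc of length $1/d$. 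The saturation then recovers the dropped edge (from the sibling-invariance requirement applied to preimages of the remaining critical edges), closing $D_x$ as a critical gap of $\lam_x$.

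To finish, I would verify that $\lam_x$ is unicritical with minor $\{x\}$. Unicriticality holds because Thurston's pullback produces only non-critical preimages, so $D_x$ is the unique critical set of $\lam_x$. The minor identification is immediate from Lemma \ref{length:function}: the majors of $\lam_x$ are its longest leaves, which are the edges of $D_x$ (of length $1/d \ge 1/(d+1)$), and their common image is $\{x\}$.

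The main obstacle is rigorously verifying that the saturation step closes $D_x$ as a critical gap without creating additional critical leaves or gaps, which is the same subtlety handled implicitly in the proof of Theorem \ref{minors:dontcross}. As a robust fallback, one can instead use an approximation argument: choose non-degenerate minors $m_n \in \uml_d$ whose endpoints converge to $x$, and apply the compactness part of Theorem \ref{t:uml} to extract a convergent subsequence $\lam(m_{n_k}) \to \lam$; since the majors of $\lam(m_{n_k})$ (of length $(1-|m_{n_k}|)/d$) converge to the edges of $D_x$, the minor of $\lam$ is the limit of $m_{n_k}$, namely $\{x\}$. Producing the approximating sequence is itself nontrivial for $x$ with special dynamical behavior (fixed or periodic) and would require a perturbation argument invoking Lemma \ref{condition:minor}.
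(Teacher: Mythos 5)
Your main argument is essentially the paper's proof: form the forward-invariant prelamination $D_x\cup\uc$, invoke Theorem~\ref{prop:sat} to saturate it into a $\sigma_d$-invariant lamination $\lam_x$, and observe that the result is unicritical with the images of the edges of $D_x$ collapsing to the degenerate minor $\{x\}$. The only real difference is that you explicitly worry about the ``no critical gaps'' hypothesis of Theorem~\ref{prop:sat} (which the paper silently glosses over here, just as it does in the proof of Theorem~\ref{minors:dontcross}), and you offer a compactness fallback whose extra cost (building the approximating sequence of non-degenerate minors, especially for periodic $x$) you correctly flag; neither addition changes the substance, and your primary route matches the paper's.
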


\begin{proof}
The set $D_x\cup \uc$ is forward invariant, has $d$ critical leaves,
and, hence, satisfies Thurston's pullback construction conditions
stated in Theorem \ref{prop:sat}.  Thus, there exists a
$\si_d$-invariant lamination having minor $x$. Such a lamination is
clearly unicritical.
\end{proof}

Denote the lamination from Lemma \ref{degenerate:point} by $\lam_x$.
By our construction both $\lam(m)$ (if $m$ is a
non-degenerate minor) and $\lam_x$ (if $x\in \uc$ is a point) are
sibling invariant laminations. By Lemma \ref{degenerate:point}, a point $x \in \uc$ may be either an
endpoint of a non-degenerate minor or there may exist no such
non-degenerate minor in which case $x$ is called a \emph{degenerate minor} (i.e., it is a point of the circle disjoint from other minors); by Lemma \ref{degenerate:point} every point of the circle is a minor, but only points disjoint from other minors will be called \emph{degenerate minors}. We will now
investigate these phenomena. If $x$ is periodic,  one vertex of the
$d$-gon $D_x$ is periodic,  the others are not. The simplest case is
when $x$ is a fixed point.

\begin{lem}\label{l:fxpt-lami}
If $x$ is a fixed point, $\lam_x$ consists of isolated leaves that
accumulate on points of $\uc$. All gaps of $\lam_x$ are pullbacks of
$D_x$. If $\lam$ is a unicritical lamination such that $x$ belongs to the image of its critical set $\Cc(\lam)$, then $\lam=\lam_x$. Thus, $x$ is a degenerate minor.
\end{lem}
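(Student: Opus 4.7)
I would handle the four pieces of the lemma in sequence, leaving the main obstacle for last.

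First, the conclusion that $x$ is a degenerate minor is immediate from Lemma~\ref{l:fxpt}(3): that lemma forbids a non-degenerate minor from having a fixed endpoint, so $x$ is not an endpoint of any non-degenerate minor and therefore qualifies as a degenerate minor.

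Next, for the structural assertions about $\lam_x$, I would use that $\lam_x$ is the Thurston pullback/saturation of the all-critical $d$-gon $D_x$ constructed in Lemma~\ref{degenerate:point}, so its unique critical set is $D_x$ itself. By Lemma~\ref{triangles:wander} every gap of $\lam_x$ is (pre)periodic or (pre)critical; unicriticality forces (pre)critical gaps to be iterated $\sigma_d$-pullbacks of $D_x$, and the absence of non-critical periodic gaps follows from the rotation-at-a-fixed-point argument given below for the uniqueness step. Every leaf of $\lam_x$ is then an edge of some pullback $d$-gon; such an edge is isolated because on each of its two sides it is bounded by a gap (the $d$-gon itself on one side and a distinct pullback gap on the other). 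As the depth of iteration grows, the pullback $d$-gons shrink and their edges accumulate at the limit points on $\uc$ coming from infinite descending branches of the pullback tree.

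For the uniqueness assertion, suppose $\lam$ is a unicritical lamination with $x\in\sigma_d(\Cc(\lam)\cap\uc)$. The $d$-fold rotational symmetry of $\Cc(\lam)$ forces $\sigma_d^{-1}(x)$---i.e., the vertex set of $D_x$---to lie in $\Cc(\lam)\cap\uc$. If $\Cc(\lam)=D_x$, then $\lam$ contains $D_x$ as its critical set, hence $\lam\supseteq\lam_x$ by saturation; for equality, any leaf of $\lam$ not in $\lam_x$ would lie inside a gap $G$ of $\lam_x$, and since $G$ is a pullback of $D_x$ (by the second part) with vertices on $\uc$, such an extra leaf is necessarily a diagonal of $G$, so some forward iterate is a diagonal of $D_x$, contradicting that $D_x$ is a gap of $\lam$. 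Otherwise $\Cc(\lam)$ properly contains the vertices of $D_x$, so $\sigma_d(\Cc(\lam))$ is a non-degenerate gap (a $k$-gon with $k\ge 2$ in the finite case) having $x$ as a vertex. Its forward orbit avoids critical gaps, so by Lemma~\ref{transitivity} some iterate $G$ is a finite periodic polygon on whose vertices the first return map acts as a non-trivial rational rotation; but $x$ is fixed by every iterate of $\sigma_d$ and is therefore a fixed vertex of the first return, while a non-trivial cyclic rotation on $\ge 2$ vertices has no fixed vertex, a contradiction.

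The main obstacle is the sub-case in which $\Cc(\lam)$ is an infinite (Fatou-like) critical gap. The saving observation is that $\sigma_d(\Cc(\lam))$ cannot collapse to the single point $\{x\}$ without forcing $\Cc(\lam)=D_x$, so in every genuine alternative case the image is a non-degenerate, eventually periodic gap, and the fixed-vertex-in-a-rotated-polygon contradiction above applies after passing via Lemma~\ref{triangles:wander} to the periodic polygon in its orbit.
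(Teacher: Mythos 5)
Your proposal for the degenerate-minor conclusion (via Lemma~\ref{l:fxpt}(3)) is correct and matches the paper's intent. The genuine trouble is in your uniqueness argument, which follows a very different and ultimately flawed route.

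The paper's argument for uniqueness is a short, uniform one that does not branch on whether $\Cc(\lam)$ is finite or infinite: from $x\in\si_d(\Cc(\lam)\cap\uc)$ and the $d$-to-$1$ property one gets $\si_d^{-1}(x)\subset\Cc(\lam)\cap\uc$, hence $D_x\subset\Cc(\lam)$ by convexity of gaps. Thus all leaves of $\lam$ are compatible with $\lam_x$, so any leaf $\ell\in\lam\sm\lam_x$ sits in a gap of $\lam_x$, which by the first claim is a pullback of $D_x$. Then some forward iterate of $\ell$ is a chord lying in $D_x$ but not an edge of $D_x$; such a chord is strictly interior to $\Cc(\lam)$, and a gap of $\lam$ cannot contain a leaf of $\lam$ in its interior. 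That is the entire argument, and it applies verbatim whether $\Cc(\lam)$ is a finite polygon or a Fatou gap.

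By contrast, you try to dispose of the case $\Cc(\lam)\supsetneq D_x$ by applying Lemma~\ref{transitivity} to the orbit of $\si_d(\Cc(\lam))$ and deriving a fixed-vertex contradiction. That lemma carries the hypothesis that \emph{no critical gap lies in the orbit of the gap}. In your ``main obstacle'' sub-case, $\Cc(\lam)$ is an infinite periodic Fatou gap, so its orbit \emph{does} contain a critical gap, namely $\Cc(\lam)$ itself; Lemma~\ref{transitivity} simply does not apply, and there is no ``periodic polygon in its orbit'' to pass to via Lemma~\ref{triangles:wander}---the orbit consists of infinite gaps on which the return map is not a rational rotation (cf.\ Theorem~\ref{Classify:Fatou:gaps} and Lemma~\ref{firstreturn:dfixed}). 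So the sub-case you flag as the main obstacle is in fact unresolved by your argument. There is a secondary, smaller issue: even in the finite case, the assertion ``its forward orbit avoids critical gaps'' is used without justification (it does hold, because a finite critical gap cannot be periodic in a degree-$d$ setting---its vertex count would have to drop by a factor of $d$ over one period---but this should be said). Finally, note that your treatment of the first claim (``absence of non-critical periodic gaps follows from the rotation-at-a-fixed-point argument given below'') defers to exactly the step that fails, which makes that part circular as well; the paper instead proves the first claim directly by exhibiting the pullback $(d+1)$-gons attached at each edge of $D_x$, partitioning each hole into $d$ equal arcs of length $1/d^2$, and iterating. I would replace your case analysis with the paper's single compatibility-plus-iteration argument.
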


\begin{proof}
The gap $D_x$ has $d$ holes. Under the map $\si_d$ they expand to cover
the circle exactly once. Moreover, for each such hole $I$ there exists
a unique fixed point $y_I\in \ol{I}$. Denote the edge of $D_x$
connecting the endpoints of $I$ by $\ell_I$. Then the $\si_d$ pullback
of $D_x$ with vertices in $I$ is a $(d+1)$-gon attached to $D_x$ at
$\ell_I$ with vertices that include the endpoints of $I$ and otherwise
partition $I$ into $d$ equal arcs of length $\frac{1}{d^2}$ each. On
the next step we will obtain new pullbacks of $D_x$ with vertices that
partition each of those arcs of length $\frac{1}{d^2}$ into yet $d$
equal arcs, etc. Passing to the limit we arrive at the first claim of the
lemma.

Now, let $\lam$ be a unicritical lamination such that $x$ belongs to the image of its critical set $\Cc(\lam)=\Cc$. Then $D_x\subset \Cc$, and all leaves of $\lam$ are compatible with $\lam_x$ (i.e., do not cross leaves of $\lam_x$). Suppose that $\lam$ has a leaf $\ell\notin \lam_x$. Then $\ell$ is contained in one of the pullbacks of $D_x$ which means that an eventual image of $\ell$ is contained in $D_x$ but is not an edge of $D_x$. Evidently it is impossible because $\lam$ is unicritical.
\end{proof}

Lemma \ref{nontrivial:object} is similar except now $x$ is periodic of
period $p>1$.

\begin{lem}\label{nontrivial:object}
Let $x$ be a non-fixed point. Then:

\begin{enumerate}

\item a unique hole $H_1$ of $D_x$ contains no $\si_d$-fixed points
    while each other hole of $D_x$ contains exactly one
    $\si_d$-fixed point;

\item $H_1$ contains a non-degenerate minimal set $\mathcal{I}_x$;


\item if $x$ is periodic of period $p>1$, and $\mathcal{L}_c$ is
    obtained from $\mathcal{L}_x$ by removing $D_x$ and its
    backward orbit, then $\lam_c$ is non-empty, and there is a
    Fatou gap $U$ of $\mathcal{L}_c$ of period $p$ that contains
    $D_x$.

\end{enumerate}

\end{lem}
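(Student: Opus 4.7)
My plan is to handle the three parts in turn, with part (3) being the most intricate.

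For part (1), I would use a counting argument. The fixed points of $\si_d$ on $\uc$ are the $d-1$ points $\{k/(d-1):0\le k\le d-2\}$, pairwise spaced at distance $1/(d-1)>1/d$. Each hole of $D_x$ is an open arc of length $1/d$ and hence contains at most one fixed point; and since $x$ is not fixed, its preimages (the vertices of $D_x$) are not fixed points, so no fixed point lies on the boundary of a hole. With $d-1$ fixed points distributed over $d$ holes, exactly one hole $H_1$ is fixed-point-free.

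For part (2), I would work with the ``survival set'' $A = \bigcap_{n\ge 0}\si_d^{-n}(\overline{H_1})$. Let $u,v$ denote the endpoints of $H_1$ (both preimages of $x$, and neither fixed since $x$ is not). Then $\si_d|_{\overline{H_1}}$ is a continuous surjection onto $\uc$ of ``degree one'' with $u,v$ identified by $\si_d$. By induction every finite nested intersection $A_N = \bigcap_{n=0}^{N}\si_d^{-n}(\overline{H_1})$ is non-empty (any $z\in A_{N-1}$ has a preimage in $\overline{H_1}$ because $\si_d|_{\overline{H_1}}$ is surjective onto $\uc$), so by compactness $A\ne\0$. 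The set $A$ is closed and forward-invariant, and Zorn's lemma produces a minimal non-empty closed $\si_d$-invariant subset $\mathcal{I}_x\subseteq A$. A singleton $\mathcal{I}_x$ would be a fixed point in $\overline{H_1}$, which (1) rules out in the interior, while the non-fixedness of $x$ rules out the two endpoints; hence $\mathcal{I}_x$ is non-degenerate.

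For part (3), assume $x$ is periodic of period $p>1$. The orbit $O = \{x,\si_d(x),\ldots,\si_d^{p-1}(x)\}$ is itself a non-degenerate minimal set in $\overline{H_1}$, and one can take $\mathcal{I}_x = O$. The structural observation is that since $\lam_x$ is sibling $\si_d$-invariant, contains $D_x$ as its only critical set, and has minor $x$, it must also accommodate the periodic orbit $O$ by containing a $\si_d$-periodic polygon built from $O$ (or at least an edge joining two points of $O$ in case $p=2$), together with all pullbacks of that polygon. These leaves survive the removal of $D_x$ and its backward orbit, showing that $\lam_c \ne \0$. I would then define $U$ as the component of $\disk\sm\lam_c^+$ containing $D_x$: since the edges of $D_x$ and of every pre-image $d$-gon have been removed, $U$ absorbs $D_x$ together with the cascade of gaps of $\lam_x$ adjacent to these removed leaves. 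To see $\si_d^p(U) = U$, I would use gap $\si_d$-invariance of $\lam_c$ together with $\si_d(D_x) = \{x\}$ and $\si_d^p(x) = x$: the iterates $\si_d^k(U)$ are gaps of $\lam_c$ each containing $\si_d^{k-1}(x)\in O$, and they cycle with period exactly $p$. Finally, pullbacks of the $O$-polygon accumulate on $\partial U$, producing uncountably many vertices of $U$ on $\uc$, so $U$ is Fatou.

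The principal difficulty lies in part (3), where establishing the correct structure of $\lam_x$ beyond the pullbacks of $D_x$ (to ensure $\lam_c\ne\0$) and then verifying that the candidate gap $U$ has period exactly $p$ and is of Fatou type (rather than a finite polygon or a Siegel gap) both require careful bookkeeping of how the periodicity of $x$ forces the periodic polygon built from $O$ into $\lam_x$. Parts (1) and (2), by contrast, are elementary.
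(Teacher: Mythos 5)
Your proofs of (1) and (2) are correct; part (1) uses a direct counting argument in place of the paper's appeal to Lemma \ref{l:minor-fxpt}, which is a perfectly good alternative, though it does not identify $H_1$ as the hole containing $x$ (the paper chooses $H_1\ni x$ and uses Lemma \ref{l:minor-fxpt} to deduce it is fixed-point-free), and that identification is used in the paper's treatment of part (3).

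Part (3), however, rests on a false assertion. You claim that the orbit $O=\{x,\si_d(x),\dots,\si_d^{p-1}(x)\}$ lies in $\ol{H_1}$ and may be taken as $\mathcal{I}_x$. This already fails for $d=2$: take $x=3/7$, of period $3$. Then $\si_2^{-1}(x)=\{3/14,\,5/7\}$, so the two holes of $D_x$ are $(3/14,5/7)$ and $(5/7,3/14)$; the unique fixed point $0$ lies in $(5/7,3/14)$, so $H_1=(3/14,5/7)$ and indeed $x=3/7\in H_1$, but $\si_2(x)=6/7\notin\ol{H_1}=[3/14,5/7]$. Thus $O\not\subset\ol{H_1}$, and $O$ cannot serve as the minimal set of part (2). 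The subsequent claim that $\lam_x$ ``must accommodate $O$ by containing a periodic polygon built from $O$'' is asserted without justification and is exactly the non-trivial content: $\lam_x$ consists of pullbacks of $D_x$ and their limits, and there is no a priori reason these organize themselves into a periodic polygon over $O$ (nor that the chords over $O$ are pairwise unlinked). The paper instead works with the genuinely different minimal set $\mathcal{I}_x\subset\ol{H_1}$ from (2) --- which in general is not $O$ --- and proves the key structural fact that pullbacks of $D_x$ can at most touch the convex hull $G$ of $\mathcal{I}_x$; that is what forces a non-degenerate gap $G'\supset G$ of $\lam_x$ whose edges are limits, not members, of the backward orbit of $D_x$, so that $G'$ survives in $\lam_c$. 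Finally, your verification that $U$ has period exactly $p$ and is a Fatou gap is left at the level of assertion: the paper proves $U$ is infinite because it contains the cascade of decorations attached along the edges of $D_x$, and it excludes period $k<p$ by re-running the $\mathcal{I}_x$ argument for the first-return map $\si_d^k|_U$, producing a non-degenerate gap of $\lam_c$ inside $U$ and hence a contradiction.
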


\begin{proof}
(1) Clearly, $\{x\}$ is the minor of $\lam_x$ and is not a vertex of
$D_x$. Choose a hole $H_1$ of $D_x$ such that $x\in H_1$. Then by Lemma
\ref{l:minor-fxpt}, no fixed point belongs to $H_1$. Since the arcs
between two consecutive fixed points are of length
$\frac{1}{d-1}>\frac1d$, $H_1$ is a \emph{unique} such hole of $D_x$.

(2) By (1) the set of points that stay forever in $\ol{H_1}$ is an
invariant compact set that contains no $\si_d$-fixed points. Hence,
there exists the desired minimal set $\mathcal{I}_x$.

(3) $\mathcal{L}_c$ is a lamination: all properties of laminations are
immediate for $\mathcal{L}_c$ except for the fact that $\lam_c$ is
closed, and to see that, observe that by construction all edges of
$D_x$ are isolated in $\lam_x$; hence, all pullbacks of edges of $D_x$
are isolated in $\lam_x$ and $\mathcal{L}_c$ is closed.

Since the period of $x$ is $p$, the $(p-1)$-th pullback of $D_x$ is a
$d$-gon  $\Delta_x$ emanating out of $x$. Taking $\sigma_d$-preimage of
$\Delta_x$ we get $d$ polygons $\Gamma^1\subset \ol{H_1},$ $\dots,$
$\Gamma^{d}\subset \ol{H_d}$ each of which is a $d+1$-gon called an
\emph{immediate decoration}, hanging out from an edge of $D_x$. Further
pullbacks of $D_x$ hang out of edges of immediate decorations (these
are called simply \emph{decorations}), etc.

Consider the convex hull $G$ of $\mathcal{I}_x$. We claim that
pullbacks of  $D_x$ can at most ``touch'' $G$ (i.e., have common
vertices with $G$), but otherwise are disjoint from $G$. Indeed, edges
of $G$ cannot intersect pullbacks of edges of $D_x$ as otherwise a
forward image of an edge of $G$ will cross an edge of $D_x$, a
contradiction. Thus, the only way the claim of the lemma fails is if
$G$ contains a pullback of an edge of $D_x$. Since $\mathcal{I}_x$ is
invariant, then an edge of $D_x$ contained in the boundary of $H_1$, is
an edge of $G$. Since $\mathcal{I}_x$ is minimal, it follows then that
$\mathcal{I}_x$ coincides with the periodic orbit of $\si_d(x)$.
Evidently, this contradicts the assumption that an edge of $G$ is also
an edge of $D_x$. The claim that the intersection of a pullback of
$D_x$ and $G$ is at most a common vertex, follows.

We claim that $G$ is contained in a non-degenerate invariant gap of $\lam_c$.
Indeed, by the above $G$ is contained in a non-degenerate gap $G'$ of $\lam_x$;
edges of $G'$ are not pullbacks of edges of $D_x$ as at each edge of a pullback
of $D_x$ yet another pullback of $D_x$ is attached. Hence all edges of $G'$ are
limits of pullbacks of  edges of $D_x$, 
and $G'$ remains a non-degenerate gap of $\lam_c$, and $\lam_c$ is not
an empty  lamination. Since $\lam_c$ is obtained from $\mathcal{L}_x$
by removing $D_x$ and its backward orbits, one can say that pullbacks
of $D_x$ do not fill up the entire unit disk.

The gap $U$ of $\mathcal{L}_c$ that contains $D_x$ is an infinite
periodic gap  which returns with degree $d$ (it is infinite because by
construction it contains immediate decorations as well as other
decorations consecutively attached to them). Hence, $U$ is a Fatou gap.
Let the period  of $U$ be $k$. If $k<p$, then $\sigma_d^k$ brings the
gap $U$ back to itself but the point $\tilde{x}$ does not return to its
initial position and has is of period $\frac{p}{k}>1$ inside $U$ under
the first return map $g=\sigma_d^k$. So, using the same technique as
above applied to $\si_d^k|_U$, one can show that the gap $U$ contains a
non-degenerate gap of $\lam_c$, a contradiction. Thus, $k \nless p $.
On the other hand, by construction $U$ contains $D_x$ and all its
immediate decorations which implies that $\si_d^p(U)=U$. Hence $k=p$
and the claim follows.
\end{proof}

	

Lemma \ref{nontrivial:object} shows how unicritical laminations with
periodic  Fatou gaps can be constructed. We will now study such
laminations. Throughout this section we use the same notation for the
now defined objects. Let $m(\lam)=m$ be the minor of $\lam$. As in
Lemma \ref{condition:minor}, there are $d$ pullbacks of $m$ denoted by
$M_1, M_2, \dots, M_d$ that are of equal length
$\lambda=\frac{1-|m|}d\ge \frac{1}{d+1}$. Since $m$ is the minor of
$\lam$, chords $M_1, M_2, \dots, M_d$ must be leaves of $\lam$ (one of
them is, hence all of them are since $\lam$ is unicritical). Observe
that they are $1/d$-rotations of each other. By Lemma
\ref{Central:strip} forward images of $m$ do not enter the central
strip $C$ of $\lam$, i.e. the component of $\cdisk\sm
\bigcup_{i=1}^{d-1} M_i$ that has all $M_i$'s on its boundary. In
particular, no forward image of $m$ coincides with a chord connecting
two consecutive endpoints of adjacent majors of $\lam$.

Consider now the case when the minor $m(\lam)=m$ of $\lam$ is periodic
of  period, say, $n$. Since $\si_d(\si_d^{n-1}(m))=m$, then
$\si_d^{n-1}(m)$ is one of the majors $M_1, \dots, M_d$. Denote
$\si_d^{n-1}(m)$ by $M(\lam)=M$ and call it the \emph{primary major} of
$\lam$. The remaining $d-1$ majors of $\lam$ are called
\emph{secondary}. Clearly, $M$ and $m$ have the same period.

\begin{lem}\label{l:prim-refixed}
Suppose that $\lam$ has an $n$-periodic cycle of Fatou gaps with $U$
critical.  Then all $d$ majors of $\lam$ are edges of $U$ and exactly
one of them is periodic of period $n$.
\end{lem}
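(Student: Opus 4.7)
The plan is to prove the lemma in two stages. First, I would show that every major is an edge of $U$: by Lemma \ref{length:function}, the $d$ majors $M_1, \dots, M_d$ are precisely the $d$ sibling $\si_d$-preimages of the minor $m$. Since $\lam$ is unicritical with $\Cc(\lam) = U$, the map $\si_d : \bd(U) \cap \uc \to \bd(\si_d(U)) \cap \uc$ is $d$-to-$1$, so all $d$ preimages of each endpoint of $m$ lie in $\bd(U) \cap \uc$. Each $M_i$ thus has its two endpoints on $\bd(U)$ and, being a leaf of $\lam$, cannot cross into the interior of the gap; hence it is an edge of $U$.

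For the second stage, set $U_j = \si_d^j(U)$ and $N := \si_d^{n-1}(m)$, an edge of $U_n = U$. Since $\si_d(M_i) = m$ for every major, we have $\si_d^n(M_i) = N$ for all $i$, so a major is fixed by the return map $f_U = \si_d^n|_{\bd(U)}$ exactly when it equals $N$. Uniqueness is automatic, and the main task is therefore to show that $N$ itself is one of the majors.

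For this, I would use the semi-conjugacy structure around the cycle. By Theorem \ref{Classify:Fatou:gaps}, each $U_j$ carries a semi-conjugacy $\psi_{U_j}: \bd(U_j) \to \uc$ conjugating the first-return map to $\si_d$, and the map $\si_d : U_j \to U_{j+1}$ descends to a map $\Phi_j : \uc \to \uc$ via $\psi_{U_{j+1}} \circ \si_d = \Phi_j \circ \psi_{U_j}$. Each $\Phi_j$ commutes with $\si_d$ (since both return-map semi-conjugacies produce $\si_d$ on $\uc$), which forces $\Phi_0(z) = dz + c_0$ (the degree-$d$ step) and $\Phi_j(z) = z + c_j$ for $j \ge 1$ (the homeomorphism steps), with every $c_j \in \tfrac{1}{d-1}\Z$. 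After normalizing $\psi_{U_1}$ so that $\psi_{U_1}(m) = 0$ and tracking $m$ around the cycle, one obtains $\psi_U(N) = c_1 + \cdots + c_{n-1}$; the requirement that $\Phi_{n-1} \circ \cdots \circ \Phi_0 = \si_d$ (coming from the $\psi_{U_1}$-conjugation of the full return map) then gives $c_0 + c_1 + \cdots + c_{n-1} \equiv 0 \pmod 1$, so $\psi_U(N) \equiv -c_0 \pmod 1$ is itself a multiple of $\tfrac{1}{d-1}$, i.e.\ a $\si_d$-fixed point on $\uc$.

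Finally, the $d$-fold rotational symmetry of $U$ corresponds under $\psi_U$ to rotation of $\uc$ by $\tfrac{1}{d}$ (the unique self-map of $\uc$ that preserves $\si_d$-fibers), so the $d$ majors correspond exactly to the $d$ equispaced $\si_d$-preimages of $\psi_U(N)$; a $\si_d$-fixed point is always among its own $\si_d$-preimages, so $\psi_U(N)$ itself lies in one of the major positions and $N$ is a major, as required. The main obstacle in this plan is establishing the quantization $c_j \in \tfrac{1}{d-1}\Z$, which hinges on each $\Phi_j$ commuting with $\si_d$; once that is in hand, the remaining bookkeeping around the cycle is routine.
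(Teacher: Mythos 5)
Your proposal takes a substantially different route from the paper and has a genuine gap in each of its two stages.

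Stage~1 is circular as stated. The inference that ``all $d$ preimages of each endpoint of $m$ lie in $\bd(U)\cap\uc$'' needs the endpoints of $m$ to already lie on $\bd(\si_d(U))$ --- but that is equivalent to the majors being edges of $U$, which is exactly what you are trying to prove. A non-circular argument is elementary and does not use the preimage structure at all: every edge of $U$ has length $<\tfrac12$, so each component of $\cdisk\sm U$ is contained in the region bounded by some edge $e$ of $U$ and the shorter arc behind it, and every leaf of $\lam$ inside that region other than $e$ itself is strictly shorter than $e$; since the majors are the unique group of $d$ sibling leaves of maximal length by Lemma~\ref{length:function}, they must be edges of $U$.

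Stage~2 has a more serious gap at the normalization step. The semi-conjugacy $\psi_{U_1}$ of Theorem~\ref{Classify:Fatou:gaps} is determined only up to post-composition with a rotation commuting with $\si_d$, i.e.\ with a rotation by a multiple of $\tfrac{1}{d-1}$. So you may ``normalize $\psi_{U_1}(m)=0$'' only when $\psi_{U_1}(m)$ is already a multiple of $\tfrac{1}{d-1}$ --- equivalently, only when $m$ is already a fixed point of the return map $\si_d^n|_{\bd(U_1)}$, which is essentially the content of the lemma. Without that normalization, tracking $M_1$ and $m$ around the cycle of maps $\Phi_j$ only reproduces the tautology $\psi_{U_0}(N)=\si_d(\psi_{U_0}(M_1))$, and the (correct) quantization $c_j\in\tfrac{1}{d-1}\Z$ together with $\sum_j c_j\equiv 0$ gives nothing beyond that.

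The paper's argument uses a different and much lighter mechanism, the Central Strip Lemma, and avoids all of this. Once the majors are edges of $U$, the return map $\si_d^n$ (with $U$ of period $n$) carries each major $M_i$ to a single edge $N=\si_d^{n-1}(m)$ of $U$; every non-major edge of $U$ lies in the interior of the Central Strip $C(\lam)$, so by Corollary~\ref{Corr:CentralStrip}, applied to the iterate $\si_d^{n-1}(m)$ of the minor (and $n\ge 2$ here since $U$ is a Fatou gap), $N$ must equal some major $M_s$. Since $N$ does not depend on $i$, we get $\si_d^n(M_s)=N=M_s$, so $M_s$ is $n$-periodic, and every other major satisfies $\si_d^n(M_i)=M_s\ne M_i$. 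No semi-conjugacy bookkeeping is required.
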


\begin{proof}
There are $d$ majors of $\lam$ of length at least $\frac{1}{d+1}$. By
Lemma  \ref{Central:strip} $\si_d^n$ maps them all to one of them which
is periodic of period $n$.
\end{proof}

Given a Fatou gap $U$ of period $k$, we call an edge (or vertex) of $U$
\emph{refixed} if its endpoints are of period $k$. The periodic major
of $\lam$ discovered in Lemma \ref{l:prim-refixed} is, evidently,
refixed.

\begin{lem}\label{firstreturn:dfixed}
Suppose that $\lam$ has an $n$-periodic cycle of Fatou gaps and $U$ is
a  critical gap in that cycle. Then each edge of $U$ is an eventual
preimage of $M(\lam)$. In particular, the first return map $g|_U$ has
exactly $d$ fixed points. Two of these points are the endpoints of
$M(\lam)$. Otherwise each component of $\bd(U)$ from which all majors
of $\lam$ are removed contains exactly one fixed point of
$g|_{\bd(U)}$.
\end{lem}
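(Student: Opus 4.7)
The plan is to prove the first claim---every edge of $U$ is an eventual $g$-preimage of $M = M(\lam)$---and then derive the fixed-point count quickly from the semiconjugacy $\psi_U$.

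I would begin with the majors. By Lemma \ref{l:prim-refixed} each major $M_i$ is an edge of $U$, and since $\si_d(M_i) = m$ we have $g(M_i) = \si_d^{n-1}(m) = M$, so every major is already a $g$-preimage of $M$. For an arbitrary edge $e$ of $U$, Lemma \ref{triangles:wander} ensures that the $g$-orbit of $e$ is eventually periodic; let $\mathcal{Z} = \{e_1, \dots, e_p\}$ denote the terminating $g$-cycle, so that proving $\mathcal{Z} = \{M\}$ completes the first claim. The mechanism is: if any iterate $\si_d^{j}(e_1)$ attains length exactly $|M|$, it is necessarily one of the $d$ majors, its image $\si_d^{j+1}(e_1) = m$, and hence the entire $\si_d$-orbit of $m$ (including $M = \si_d^{n-1}(m)$) lies in the $\si_d$-orbit of $e_1$. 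Since $M$ is an edge of $U$, this yields $M \in \mathcal{Z}$, and as $M$ is $g$-fixed, $\mathcal{Z} = \{M\}$.

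The main obstacle is ruling out the alternative, in which every iterate $\si_d^j(e_1)$ has length strictly less than $|M|$. In that case, by Lemma \ref{oscillating} the orbit lies entirely in $[|m|, |M|)$, and $e_1$ is a non-major $g$-fixed edge of $U$. Consider the gap $W$ on the opposite side of $e_1$ from $U$. If $W$ were a Fatou gap, then by unicriticality it would lie in the cycle of $U$, hence outside the central strip $C(\lam)$ in some lens region; but then $e_1$, bounding $U \subset C(\lam)$ from $W$ in a lens, would have to lie on $\bd(C(\lam))$, forcing it to be a major---contradiction. Hence $W$ is a finite periodic polygon, and Lemma \ref{transitivity} applies: the first return map of $W$ permutes its sides transitively as a rational rotation. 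If $W$ has period $n$, this rotation has no fixed side, contradicting $g(e_1) = e_1$; so $W$ has period $n/k$ for some $k > 1$, and $W$ has precisely $k$ sides forming the $\si_d$-orbit of $e_1$, each adjacent to a distinct Fatou gap $U_{jn/k}$ in the cycle of $U$. The delicate step is to rule out this final configuration using Lemma \ref{Central:strip} and Corollary \ref{Corr:CentralStrip}: since each of the Fatou gaps $U_{jn/k}$ would need to abut $W \subset C(\lam)$ across a non-major edge, while only $U = U_0$ among the cycle's Fatou gaps genuinely lies in $C(\lam)$, each such abutment forces an iterate of $m$ to enter $C(\lam)$ strictly before it becomes $M$, contradicting Corollary \ref{Corr:CentralStrip}. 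This ``lens versus central strip'' step is where I expect the most care to be needed.

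Granted the first claim, the fixed-point count follows cleanly. Theorem \ref{Classify:Fatou:gaps} provides the semiconjugacy $\psi_U \colon \bd(U) \to \uc$ of $g|_{\bd(U)}$ to $\si_d$, which has $d - 1$ fixed points on $\uc$. The fiber $\psi_U^{-1}(q_0)$ over $q_0 := \psi_U(M)$ equals the edge $M$ itself, contributing two $g$-fixed points (its endpoints). For any other $\si_d$-fixed point $q \neq q_0$, if $\psi_U^{-1}(q)$ were an edge, that edge would be a $g$-fixed edge of $U$ distinct from $M$, contradicting the first claim; hence $\psi_U^{-1}(q)$ is a single point. Thus $g|_{\bd(U)}$ has $2 + (d - 2) = d$ fixed points. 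For the distribution across the $d$ components of $\bd(U)$ obtained by removing the open majors: under $\psi_U$ these correspond to the $d$ arcs of $\uc$ between consecutive $\si_d$-preimages of $q_0$, equally spaced at $1/d$, while the $d-1$ fixed points of $\si_d$ are equally spaced at $1/(d-1) > 1/d$. A direct count shows that the two arcs flanking $q_0$ contain no other fixed points while each of the remaining $d-2$ arcs contains exactly one, yielding via $\psi_U^{-1}$ the stated distribution: the two components of $\bd(U)$ flanking $M$ each contain an endpoint of $M$, and each of the other $d-2$ components contains exactly one $g$-fixed point.
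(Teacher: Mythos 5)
Your route to the first claim differs genuinely from the paper's. The paper takes an arbitrary $\si_d$-periodic edge $N$ of $U$, passes to the longest iterate $\si_d^i(N)$ in its orbit, and applies the Central Strip Lemma to $\si_d^i(N)$ itself: if $\si_d^i(N)$ is not a major it lies in a crescent $\mathcal{R}(M')$, its central strip $C(\si_d^i(N))$ contains $C(\lam)\supset U$, and then $N$ --- being shorter and a forward image --- would re-enter this strip too early and too short, in violation of Lemma~\ref{Central:strip}. You instead pass to the gap $W$ on the far side of a non-major periodic edge, invoke Lemma~\ref{transitivity} for $W$, and pit the location $W\subset C(\lam)$ against the fact that all non-critical Fatou gaps in the cycle lie in crescents. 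Both approaches ultimately lean on the Central Strip machinery, but yours routes through $W$ and Lemma~\ref{transitivity} rather than iterating the leaf directly.

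There is, however, a genuine gap. In the no-major case you assert that ``$e_1$ is a non-major $g$-fixed edge of $U$,'' i.e., that the terminating $g$-cycle $\mathcal{Z}$ has length one, but you never justify $p=1$. A priori the $g$-periodic edges of $U$ correspond under $\psi_U$ to a finite forward-invariant set of $\si_d$-periodic points of $\uc$, and nothing rules out orbits of period $p>1$ in that set. Your entire analysis of $W$ (its period $n/k$, its $k$ sides, the Fatou gaps it abuts) is tuned to the case $g(e_1)=e_1$. To complete the argument along your lines you would have to redo it for a $g$-cycle of length $p>1$: the gaps $W_1,\dots,W_p$ opposite $e_1,\dots,e_p$ are permuted by $g$; $W_1$ is a finite periodic polygon with $np/q$ sides $\si_d^{jq}(e_1)$ (where $q$ is its $\si_d$-period); each side with $jq\not\equiv 0\pmod n$ meets a crescent Fatou gap and must therefore be a major, a contradiction; and the residual case $q\equiv 0\pmod n$ would force $W_1$ and $U$ to share at least two edges, which two gaps of a lamination cannot do. So the approach is repairable, but the $p=1$ reduction is a real omission, not a cosmetic one. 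A secondary concern is that you take for granted that the far side of $e_1$ is a gap $W$ rather than a limit of leaves; the paper's direct leaf-iteration argument needs no such assumption. Your derivation of the fixed-point count from the first claim via $\psi_U$ is correct and matches the paper's intent.
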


\begin{proof}
Evidently, $U$ has $d-1$ edges (possibly, degenerate edges, i.e.
vertices) that are refixed. In particular $M(\lam)$ is a refixed edge
of $U$. By Lemma \ref{length:function}, $|M(\lam)|\ge \frac{1}{d+1}$.
Thus, $\lam$ has $d$ majors all of which are edges of $U$ of length at
least $\frac{1}{d+1}$.

Take an edge $N$ of $U$. It is well-known that any edge of $U$ is
eventually  mapped into either critical or periodic edge of $U$. Since
there are no critical edges of $U$, we may assume that $N$ is itself
periodic. Let $\si_d^i(N)$ be the longest leaf in the orbit of $N$. By
Lemma \ref{length:function}, $|\si_d^i(N)|\ge \frac{1}{d+1}$. It
follows that there exists a major $M'$ of $\lam$ such that the
endpoints of $\si_d^i(N)$ belong to the hole of $U$ behind $M'$.
However then, as we iterate $\si_d^i(N)$, it cannot be mapped to a
chord longer than $\si_d^i(N)$ (by the choice of $\si_d^i(N)$), and
neither can it enter its central strip (by Lemma \ref{Central:strip}),
a contradiction with $N$ being an eventual image of $\si_d^i(N)$.
\end{proof}

We will now prove the first main result of this section.

\begin{thm}\label{periodicpt:2possibilities}
	For a $p$-periodic point $x \in \ucirc, p>1,$ there are precisely two
	possibilities.
	
\begin{enumerate}	
\item \textbf{Degenerate case}: $x$ is the degenerate minor of some
    unicritical  lamination but there  exists no unicritical
    lamination with a non-degenerate minor one of whose endpoints
    is $x$.

\item \textbf{Non-degenerate case}: $x$ is an endpoint of a unique
    non-degenerate minor $m$ of some unicritical lamination.
\end{enumerate}
	
\end{thm}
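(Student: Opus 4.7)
The plan is as follows. First I observe that the two listed possibilities are mutually exclusive and exhaustive essentially by definition: Lemma~\ref{degenerate:point} shows every $x \in \uc$ is the minor of some unicritical lamination, and by the definition preceding the theorem $x$ is a \emph{degenerate minor} precisely when $x$ is not an endpoint of any non-degenerate minor in $\uml_d$. Hence the content of the theorem is the uniqueness assertion in case~(2): if $x$ is an endpoint of some non-degenerate minor, that minor is unique.

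For uniqueness, I argue by contradiction. Suppose $m_1 = \ol{xy_1}$ and $m_2 = \ol{xy_2}$ are two distinct non-degenerate minors in $\uml_d$ sharing the endpoint $x$. By Theorem~\ref{minors:dontcross} they do not cross. Since they share $x$, there are two subcases: (a) after relabeling, $y_2 \in H(m_1)$, so $m_1 \succ m_2$; and (b) $y_1, y_2$ lie on opposite sides of $x$. In case (a), Lemma~\ref{l:partialorder}(2) gives $m_1 \in \lam(m_2)$, so in $\lam(m_2)$ both $m_1$ and $m_2$ are leaves emanating from the $p$-periodic point $x$.

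I then invoke an analogue of Lemma~\ref{nontrivial:object}(3) adapted to $\lam(m_2)$, replacing the critical $d$-gon $D_x$ by the critical $2d$-gon $D(m_2)$: this yields a periodic cycle of Fatou gaps in $\lam(m_2)$ of period $p$, and $x$ is a periodic vertex of one of them, say $U$, with $m_2$ as the edge of $U$ incident to $x$ on the $y_2$-side. By Lemmas~\ref{l:prim-refixed} and~\ref{firstreturn:dfixed}, every edge of the critical Fatou gap of $\lam(m_2)$ is an eventual pullback of the primary major $\si_d^{p-1}(m_2)$; combined with the Central Strip Lemma~\ref{Central:strip}, which forbids forward iterates of $m_1$ from entering the central strip of $\lam(m_2)$, this forces the edge of $U$ at $x$ on the $y_2$-side to be uniquely $m_2$. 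Hence $m_1 = m_2$, contradicting $m_1 \ne m_2$. Case~(b) is handled by swapping the sides: in $\lam(m_1)$ the analogous Fatou-gap structure forces the edge at $x$ on the $y_2$-side to coincide with $m_2$, placing us back in the framework of case~(a).

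The main obstacle I anticipate is twofold: (i) to adapt Lemma~\ref{nontrivial:object}(3) to the non-degenerate minor setting, i.e. to verify that $D(m_2)$ plays the role of $D_x$ and that a period-$p$ Fatou gap of $\lam(m_2)$ containing $D(m_2)$ exists with $x$ on its boundary; and (ii) to show that on each side of the periodic vertex $x$, the Fatou gap $U$ admits exactly one incident edge. Obstacle~(i) is routine given the essentially parallel roles of $D_x$ and $D(m_2)$ as the critical sets; obstacle~(ii) is more delicate and requires combining the length-function analysis (Lemmas~\ref{length:function} and~\ref{oscillating}) with Lemma~\ref{transitivity} (transitivity of the first-return map on sides of a periodic gap) to rule out any accumulation of edges of $U$ at the periodic vertex $x$.
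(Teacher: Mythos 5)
Your proposal takes a different route from the paper's, and it has a genuine gap at the decisive step. The paper works directly with the two primary majors $M_y=\ol{x'y'}$ and $M_z=\ol{x'z'}$ (which, since $|M_z|<\tfrac 1d$ and $m\succ n$ implies $M_n\succ M_m$, necessarily share the pullback endpoint $x'$ of $x$), and shows that $z'$ lies in an arc $I$ adjacent to $M_y$ containing no $p$-periodic points of $\bd(U_y)$, whence $M_z$ must cross some edge $\ell$ of $U_y$ with endpoints in $I$. Iterating $\si_d^p$ and using Lemma~\ref{firstreturn:dfixed} (that $\ell$ is an eventual pullback of $M_y$) then produces an image of $M_z$ crossing a sibling of $M_y$, hence crossing a sibling of $M_z$ itself. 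This is a concrete crossing-propagation argument that never needs to know the local structure of $\partial U_y$ at the periodic vertex.

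Your argument, by contrast, embeds $m_1$ into $\lam(m_2)$ via Lemma~\ref{l:partialorder}(2) and then wants to conclude that $m_1$ and $m_2$ are both the \emph{edge of the Fatou gap at $x$ on the $y_2$-side}, hence equal. This is a non sequitur as written: having $m_1\in\lam(m_2)$ with endpoint $x$ and $m_1\succ m_2$ does not make $m_1$ an edge of the Fatou gap — you must first rule out that $m_1$ exits the disk through a hole of the Fatou gap, and then rule out that the boundary of the Fatou gap has a second edge at $x$. The second point is precisely your acknowledged ``obstacle~(ii),'' and it is not routine: it is essentially the statement that the Cantor set $\bd(\si_d(U))\cap\uc$ has no isolated endpoint at $x$, which in the paper is established only later (Lemmas~\ref{l:lineup}, \ref{countable:gaps}, and \ref{l:molecule}, via the Unicritical Main Cardioid). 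Invoking Lemma~\ref{transitivity} here cannot help because it applies to finite gaps, and the length-function lemmas alone do not control the local combinatorics of a Fatou gap boundary at a vertex. Finally, your one-sentence dismissal of case~(b) (opposite sides) does not actually reduce it to case~(a) — you would need a separate geometric argument, for instance via the constraint from Lemma~\ref{l:minor-fxpt} that forces the primary majors of $m_1$ and $m_2$ to share the endpoint $x'$, which in turn forces nestedness. In short, the proposal identifies the correct focal point (uniqueness) but leaves the central mechanism unproven and thus does not constitute a complete proof.
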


\begin{proof}
Suppose that the degenerate case does not hold and there exists a minor
$m$ with endpoint $x$. We claim that such $m$ is unique. Suppose
otherwise. We may assume that $m=\ol{xy}$, $y$ is period $p$, $H(m)=(x,
y)$ (the movement from $x$ to $y$ inside $(x, y)$ is clockwise) and for
some periodic point $z\in (x, y)$ of period $p$ the chord $\ol{xz}$ is
also the minor of some unicritical cubic lamination. Let the
corresponding \emph{primary majors} be $M_y=\ol{x'y'}$ and
$M_z=\ol{x'z'}$. Also, let $U_y$ be the critical Fatou gap with major
$M_y$ and $U_z$ be the critical Fatou gap with major $M_z$.

Denote by $L$ and $R$ the two siblings of $M_y$ adjacent to $M_y$.
Since by Lemma \ref{length:function} $\frac{1}{d+1}\le
|M_z|<\frac{1}{d}$, then the point $z'$ must belong to an arc $I$ that
separates (in the circle) $M_y$ from one of those two siblings $L$ and
$R$, but does not contain the other one. Since this arc contains no
$p$-periodic points that belong to $\bd(U_y)$, then there is an edge
$\ell$ of $U_y$ with endpoints in $I$ that crosses $M_z$. Recall that
by Lemma \ref{firstreturn:dfixed} $\ell$ is an eventual preimage of
$M_y$. Since $\si_d^p$ acts on $U_y$ as $\si_d$ (after we collapse all
edges of $U_d$ to points), then after several iterations of $\si_d^p$
the leaf $\ell$ will map onto a sibling $M''_y$ of $M_y$ distinct from
$M_y$; it is easy to see that, as it happens, the leaf $M_z$ will keep
crossing the corresponding images of $\ell$. In particular, the
appropriate image of $M_z$ will cross $M''_y$. However the location of
$M_z$ shows that then this image of $M_z$ will cross one of its
siblings, a contradiction.
\end{proof}

\section{The proof of the fact that $\uml_d$ is a
q-lamination}\label{s:umlq}

First let us classify all gaps of $\uml_d$. If $m$ is a minor, then by Lemma \ref{l:minor-fxpt} the arc $H(m)$ contains not
fixed points. Thus, if we connect the center $(0, 0)$ of $\disk$ with
fixed points of $\si_d$ by $d-1$ radii, then each minor is located in
one of the thus created $d-1$ circular sectors. Any gap $G$ of $\uml_d$
has a side $m$ that ``faces'' $(0, 0)$ (separates $(0, 0)$ from the
rest of $G$). Other edges $n$ of $G$ are incomparable among themselves
because their sets $H(n)$ are pairwise disjoint. Thus, all edges of $G$
are \emph{immediate successors} of $m$ in the partial order $\succ$.
Call $m$ the \emph{leading edge} of $G$.


Let $\mathcal{L}$ be a lamination with minor $m=\ol{ab}$. 
$\mathcal{L}$ may have a critical gap $\Cc(\lam)$ which is either a
pre-periodic polygon with $dn$ sides ($n\ge 3$) or a periodic Fatou gap
(this holds by Lemma \ref{triangles:wander}). Consider all three cases
for $\Cc(\lam)$.

\begin{lem}\label{polygonal:gaps}
If $\Cc(\lam)$ is a polygon with $dn$ sides($n\ge 3$), then
$\si_d(\Cc(\lam))$ is a gap of $\uml_d$.
\end{lem}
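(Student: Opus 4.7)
The plan is to prove (a) every side of $P := \si_d(\Cc(\lam))$ is a minor of some unicritical lamination and hence a leaf of $\uml_d$, and (b) no leaf of $\uml_d$ lies strictly inside $P$; together these say $P$ is a gap of $\uml_d$.

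For (a), for each side $e$ of $P$ I would verify the four conditions of Lemma \ref{condition:minor}. The main input is a length estimate from the $d$-fold symmetry of $\Cc(\lam)$: its $dn$ vertices partition into $d$ blocks, one per arc $[k/d,(k+1)/d)$, and the $d$ between-block edges of $\Cc(\lam)$ are the majors of $\lam$, of common length $1/d-\alpha\ge 1/(d+1)$; hence the within-block spread satisfies $\alpha\le 1/(d(d+1))$. Each within-block edge then has length $\le\alpha<1/(2d)$, lies in the increasing branch of $\psi$, and has image of length $\le d\alpha\le 1/(d+1)$; together with $|m(\lam)|\le 1/(d+1)$ this gives condition (1). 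Condition (2) is immediate because $e\in\lam$. Condition (3) follows from Lemma \ref{oscillating}: the $\lam$-orbit of $e$ grows by factor $d$ into $[|m(\lam)|,|M(\lam)|]$ and then stays there, never shorter than $|e|$; the case $e=m(\lam)$ is Corollary \ref{Corr:CentralStrip}. For condition (4), the $d$ ``long'' pullbacks $M_i(e)$ of $e$ are either the majors of $\lam$ themselves (when $e=m(\lam)$, and Corollary \ref{Corr:CentralStrip} applies) or they join non-adjacent vertices of $\Cc(\lam)$, because $n\ge 3$ leaves at least $n-2\ge 1$ vertices of $\Cc(\lam)$ strictly between their endpoints; thus each such $M_i(e)$ is a diagonal of the gap $\Cc(\lam)$, and no leaf of $\lam$ (in particular no forward image of $e$) can enter its interior and cross it.

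For (b), suppose for contradiction that some $m'\in\uml_d$ lies strictly inside $P$. By Theorem \ref{minors:dontcross} $m'$ cannot cross any side of $P$, so $m'$ must join two non-adjacent vertices $v_a,v_b$ of $P$ and is a diagonal of $P$. By Lemma \ref{triangles:wander} the forward $\si_d$-orbit of $P$ is eventually periodic, landing after $r\ge 0$ steps on a periodic gap $G$ of period $q$. Every gap $\si_d^i(\Cc(\lam))$ with $i\ge 1$ is non-critical (since $\Cc(\lam)$ is the unique critical gap of $\lam$), so the gap-invariance axiom forces $\si_d$ to act on consecutive boundary vertex sets by orientation-preserving bijections; the cyclic order on vertices is therefore preserved all the way along the orbit, so the non-adjacency of $v_a,v_b$ persists and $\si_d^r(m')$ is a genuine diagonal of $G$ with endpoints $u_i:=\si_d^r(v_a)$, $u_j:=\si_d^r(v_b)$ non-adjacent among the cyclically labeled vertices $u_0,\dots,u_{n_G-1}$ of $G$.

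Now I apply Lemma \ref{transitivity}: the first-return map $g=\si_d^q|_G$ acts on the $n_G$ vertices as a rotation by $\ell/n_G$ with $\gcd(\ell,n_G)=1$. Let $a$ be the smaller cyclic distance between $u_i$ and $u_j$, so $2\le a\le n_G/2$ (the lower bound because $\si_d^r(m')$ is a diagonal). Since $\gcd(\ell,n_G)=1$, the residues $\{s\ell\bmod n_G:s\ge 1\}$ exhaust the nonzero classes, so some $s_0\ge 1$ gives $s_0\ell\bmod n_G\in(0,a)$; a direct cyclic-order check shows $g^{s_0}(\si_d^r(m'))=\ol{u_{i+s_0\ell},u_{j+s_0\ell}}$ has one endpoint in the short arc $(u_i,u_j)$ and the other in the complementary arc, so it crosses $\si_d^r(m')$. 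But $g^{s_0}(\si_d^r(m'))=\si_d^{r+s_0q}(m')$, so two forward $\si_d$-iterates of $m'$ cross, contradicting condition (2) of Lemma \ref{condition:minor} applied to the minor $m'$. The one delicate point is the propagation of cyclic order through the orbit in (b): it is essential that $\Cc(\lam)$ be the only critical gap of $\lam$, for otherwise $\si_d$ could be multi-to-one somewhere along the orbit and the diagonal $m'$ could collapse to an edge of $G$, short-circuiting the rotation argument.
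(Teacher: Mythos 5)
Your proof is correct, and part (b) follows the paper's own argument (invoking Lemma \ref{triangles:wander} and Lemma \ref{transitivity} to see that any diagonal of $\si_d(\Cc(\lam))$ has crossing forward images), just spelled out in considerably more detail — including the observation, only implicit in the paper, that no critical gap appears in the forward orbit of $\si_d(\Cc(\lam))$, so that Lemma \ref{transitivity} applies. Where you genuinely diverge is in part (a). The paper does not verify the four conditions of Lemma \ref{condition:minor} for each edge $e$; instead it refines $\lam$ directly. For a fixed edge $e$ of $\si_d(\Cc(\lam))$, the convex hull of the $d$ sibling edges of $\Cc(\lam)$ mapping to $e$ is a collapsing $2d$-gon $D(e)$ sitting inside $\Cc(\lam)$; the prelamination obtained from $\lam$ by inserting $D(e)$ and pulling it back along the backward orbit of $\Cc(\lam)$ is forward invariant, so Thurston's pullback (Theorem \ref{prop:sat}) produces an invariant unicritical lamination whose critical set is $D(e)$ and whose minor is $e$. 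This is slicker because it bypasses any length bookkeeping: compatibility with $\lam$ is automatic since the $d$ ``long'' complementary edges of $D(e)$ are diagonals of the gap $\Cc(\lam)$, exactly the observation you use for your condition (4). Your explicit verification of conditions (1)--(4) is a legitimate alternative; it has the advantage of producing the quantitative bound $|e|\le |m(\lam)|$, which the paper's route never needs.

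Two small points to tighten in your write-up. First, labeling the blocks by the fixed arcs $[k/d,(k+1)/d)$ and asserting that the ``between-block'' edges are the majors is slightly off: the majors need not span those particular arc boundaries. The correct block structure is the one cut out by the majors themselves — the $d$ holes of the central strip $C(M(\lam))$, each of length $\tfrac1d-|M(\lam)|\le \tfrac1{d(d+1)}$, each containing the $n-1$ non-major edges of $\Cc(\lam)$ in that sector. The arithmetic you then do is unchanged. Second, for condition (3) you write that once the orbit of $e$ reaches $[|m(\lam)|,|M(\lam)|]$ it is ``never shorter than $|e|$''; this needs $|e|\le |m(\lam)|$, which does follow from your estimate ($|e|\le d\alpha$ and $|m(\lam)|=1-d|M(\lam)|=d\alpha$), but you should say so explicitly — it is not a consequence of Lemma \ref{oscillating} alone.
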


\begin{proof}
The gap $\Cc(\lam)=G$ can be subdivided by adding a collapsing $2d$-gon
that coincides with the convex hull of $d$ sibling edges of $G$ with the same image which is an edge of $\si_d(G)=G'$. That collapsing $2d$-gon, apart from $d$ sibling edges of it shares with $G$, will have $d$
complementary edges connecting the appropriate endpoints of the sibling
edges of $G$ mentioned above.
This can be done for each edge of $G'$, thus it can be done in $n$ ways. Each edge of $G'$ is the minor of the corresponding lamination obtained by pulling back the appropriate $2d$-gon inside pullbacks of $G$.
We claim that $G'$ is a gap in $\uml_d$. Indeed,
otherwise $G'$ contains a minor which is a diagonal of $G'$. However,
by Lemma \ref{triangles:wander} and Lemma \ref{transitivity} every
diagonal of $G'$ has crossing forward images and, hence, cannot be a
minor. Thus, $G'$ is an actual gap of $\uml_d$.
\end{proof}

To consider the case when $\Cc(\lam)$ is a Fatou gap we need to study
the unicritical version of the quadratic Main Cardioid. We first prove the following Lemma:

\begin{lem}\label{l:part1}
Let $\ell$ be a critical chord of length $\frac{1}{d}$. Then there
exists an 
invariant minimal set $\Ii\subset \ol{H(\ell)}$, and for each such $\Ii$
the convex hull $G$ of $\Ii$ has the longest edge $M$ that
separates the rest of $\Ii$ from the center of $\disk$; the image
$m=\si_d(M)$ is a minor, and the unicritical lamination $\lam(m)$
includes $G$ as an invariant gap or leaf.
\end{lem}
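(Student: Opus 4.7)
The plan proceeds in four steps; the only nontrivial step is the verification of condition~(3) in Lemma~\ref{condition:minor}.

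First, to produce $\Ii$: the set $\Lambda=\bigcap_{n\ge 0}\si_d^{-n}(\ol{H(\ell)})$ is a nested intersection of non-empty compact subsets of $\ol{H(\ell)}$ (at each stage a pre-image inside $\ol{H(\ell)}$ exists because $\si_d|_{\ol{H(\ell)}}$ covers $\uc$), so $\Lambda$ is non-empty, closed and $\si_d$-invariant, and Zorn's lemma yields a minimal closed $\si_d$-invariant subset $\Ii\subset\Lambda$.

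Second, I analyse $G=\ch(\Ii)$. Since $\Ii\subset\ol{H(\ell)}$, an arc of length $1/d\le 1/2$, any chord between two points of $\Ii$ has length equal to its arc distance in $\ol{H(\ell)}$, which is bounded by the diameter of $\Ii$. Taking $p,q$ to be the extremal points of $\Ii$ in $\ol{H(\ell)}$, the chord $M=\ol{pq}$ is an edge of $G$ realising this diameter, so $M$ is the longest edge of $G$; the short arc $H(M)\subset\ol{H(\ell)}$ contains all other points of $\Ii$, hence $M$ separates them from the centre of $\disk$. Minimality forces $\si_d(\Ii)=\Ii$, so $m=\si_d(M)$ is a chord between two points of $\Ii$ and $|m|\le|M|\le 1/d$. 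The length function gives $|m|=\min(d|M|,\,1-d|M|)$; the possibility $|m|=d|M|$ is incompatible with $|m|\le|M|$ when $d\ge 2$ and $|M|>0$, so $|m|=1-d|M|$, forcing $|M|\ge 1/(d+1)$ and $|m|\le 1/(d+1)$.

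Third, I verify the four hypotheses of Lemma~\ref{condition:minor}. Condition~(1) is just established. Since $\si_d|_{\ol{H(\ell)}}$ is orientation-preserving onto $\uc$, it preserves the cyclic order on $\Ii$; consecutive vertices of $\Ii$ therefore map to consecutive vertices, so each $\si_d^j(M)$ is an edge of $G$. Edges of a convex gap do not cross (giving~(2)), and the $d$ short pullbacks $M_1,\ldots,M_d$ of $m$ are $M$ together with its $d-1$ rotations by $1/d$, which occupy disjoint rotational sectors, while every iterate of $m$ stays in the sector containing $G$ (giving~(4)). The main obstacle is condition~(3). Set $L_i=|\si_d^i(M)|$, so $L_0=|M|$ is the maximum of the orbit and every $L_i\le L_0\le 1/d$. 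Pick $j^*$ minimising $L_{j^*}$ and consider $L_{j^*-1}$: it lies in $[0,1/d]$, hence in the first or second lap of the length function. If $L_{j^*-1}\le 1/(2d)$ then $L_{j^*}=dL_{j^*-1}>L_{j^*-1}$, contradicting the minimality of $L_{j^*}$; so $L_{j^*-1}\in(1/(2d),\,1/d]$ and $L_{j^*}=1-dL_{j^*-1}\ge 1-dL_0=L_1=|m|$. Thus $|m|$ realises the minimum and~(3) holds.

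Finally, Lemma~\ref{condition:minor} produces $\lam(m)$; its leaves include the forward orbit of $m$ and the majors $M_1,\ldots,M_d$, which between them are precisely the edges of $G$. Combined with $\si_d(G)=\ch(\si_d(\Ii))=G$, this realises $G$ as an invariant gap of $\lam(m)$, degenerating to a leaf when $|\Ii|=2$ and to a single vertex (treated via Lemma~\ref{degenerate:point}) when $\Ii$ is a single fixed point.
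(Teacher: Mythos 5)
Your argument takes a genuinely different route from the paper's. The paper first associates a rotation number $\rho(\ell)$ to $\sigma_d$ restricted to $\ch(A(\ell))$, splits into the rational case (where $\Ii$ is a finite periodic orbit and Thurston's pullback construction applies) and the irrational case (where $G$ is an invariant Siegel gap whose unique critical edge is $\ell$), and treats each structurally. You instead verify the four hypotheses of Lemma~\ref{condition:minor} directly with length-function estimates. Your verification of condition~(3) — picking $j^*$ minimising $L_{j^*}$, ruling out $L_{j^*-1}\le 1/(2d)$ because the first lap of $\psi$ is strictly expanding, and deducing $L_{j^*}=1-dL_{j^*-1}\ge 1-dL_0=L_1$ — is a clean and self-contained argument, and the approach works for the rational (finite $\Ii$) case. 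This buys you an elementary path around the rotation-number machinery.

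There is, however, a genuine gap: you never treat the Siegel case, where $\Ii$ is an \emph{infinite} minimal set. In that case the critical chord $\ell$ is necessarily an edge of $G$ (edges of a periodic infinite gap are (pre)periodic or (pre)critical; an irrational rotation forbids (pre)periodic, and the only critical chord with both endpoints in $\ol{H(\ell)}$ is $\ell$), so $M=\ell$, $|M|=1/d$, and $m=\sigma_d(M)$ is \emph{degenerate}. Lemma~\ref{condition:minor}, which you invoke, is stated for a non-degenerate leaf $m$; your condition~(3) argument also collapses, since the orbit lengths $L_i$ have infimum $0$ (already attained at $L_1=0$), so the ``pick $j^*$ minimising $L_{j^*}$'' step yields nothing. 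Your closing sentence addresses only the sub-cases $|\Ii|=2$ and $\Ii$ a singleton fixed point; the infinite minimal set never appears. To close the gap you would need a separate argument, along the lines of the paper's case~(2), showing that $G$ is an invariant Siegel gap, that $m=\sigma_d(\ell)$ is a prime irrational (degenerate) minor, and that the lamination $\lam_m$ produced by Lemma~\ref{degenerate:point} actually has $G$ as one of its gaps.
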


\begin{proof}
By Lemma \ref{l:fxpt} we may consider a critical chord $\ell$ of length
$\frac{1}{d}$ with no fixed point in $\ol{H(\ell)}$.  The set
$A(\ell)\ne \0$ (since $\si_d(\ol{H(\ell)})=\uc$) of all points with
orbits contained in $\ol{H(\ell)}$ is non-degenerate (by the ``no fixed
points in $\ol{H(\ell)}$'' assumption), forward invariant and onto (the
latter follows again because $\si_d(\ol{H(\ell)})=\uc$), with the
circular order preserved on $A(\ell)$ except when the endpoints of
$\ell$ belong to $A(\ell)$. Take a minimal set $\Ii\subset
\ol{H(\ell)}$. If $A(\ell)$ consists of two points, then $A(\ell)=\Ii$
is a two periodic orbit, and we associate with $A(\ell)$ the circle
rotation by $\frac12$ and the rotation number $\frac12$. If $A(\ell)$
consists of more than two points we consider $\ch(A(\ell))=U(\ell)$
with the map $\si_d|_{U(\ell)}$ (recall that we canonically extend
$\si_d$ onto chords of the unit circle). Since the circular orientation
on $A(\ell)$ is preserved, the degree of $\si_d|_{\bd(U(\ell))}$ is
$1$.

Thus, we can associate to $\si_d|_{\bd(U(\ell))}$ (and, hence, to
$\ell$) a well-defined rotation number $\rho(\ell)\in (0, 1)$ (there
are no fixed points in $A(\ell)$!). Consider the convex
hull $G$ of $\Ii$. Among edges of $G$ there is a unique edge $M$
that ``faces'' the center $(0, 0)$ of $\disk$ (separates $(0, 0)$ from
the rest of $G$). This is the longest edge of $G$. Set $m=\si_d(M)$. By
Lemma \ref{length:function}, $|M|\ge \frac{1}{d+1}$, and by Lemma
\ref{l:minor-fxpt} $\si_d(\ell)\in H(m)$. Consider two cases related to
the properties of any minimal invariant set $\Ii\subset \ol{H(\ell)}$
(later on we will prove that $\Ii$ is unique).

(1) Suppose that $\Ii$ is finite, and, hence, is a periodic orbit. This
holds if and only if $\rho(\ell)$ is rational. As we map $m$ forward,
the images of $m$ are all  oriented in the same way inside $H(M)$
because, as long as the points stay in $H(\ell)$, their circular order
is preserved. Their lengths stay under $\frac{1}{d}$, and on each step
they are simply multiplied by $d$.

This allows us to evaluate lengths of all edges of $G$ if we know the
period $q$ (not even the rotation number).

Set $\ol{ab}=m=\si_d(M)$ and consider the critical $2d$-gon $D(m)$. It
has $M$ and all its siblings as edges and in fact coincides with
$\si_d^{-1}(\{a, b\})$. Since $D(m)$ and $G$ form a forward invariant
lamination, Thurston's pull back construction goes through and gives
rise to the canonical pullback lamination $\lam(m)$ that has $m$ as its
minor. Such minors are said to be \emph{prime rational minors}.

(2) If $\Ii$ is infinite, then it is known that $G$ is an
invariant gap such that when we collapse all its edges to points and
transfer the action of $\si_d$ onto the resulting circle we get an
irrational rotation. Such gaps are called invariant \emph{Siegel} gaps.
Since edges of an invariant gap are (pre)periodic or (pre)critical,
edges of $G$ are (pre)critical. By our setup, $G$ can only have one
critical edge coinciding with $\ell$. Let us show that then no two
edges of $G$ can intersect. Indeed, if they do, then after
applying $\si_d$ the appropriate number of times we would see that both
$\ell$ and an edge $\ell'$ of $G$ have a common endpoint, say,
$b$. However $\ell'$ must eventually map to $\ell$ thus implying that
$\si_d(\ell)$ is periodic, a contradiction. Hence no two edges of
$G$ have a common endpoint, the entire set $A(\ell)$ is minimal,
and coincides with the limit set of, say, $\si_d(\ell)$. This case
corresponds to an irrational rotation number $\rho(\ell)$. The
degenerate minor $\si_2(\ell)$ is called a \emph{prime irrational
minor}.
\end{proof}

\begin{defn}\label{d:irrr}
Minors whose existence is proven in Lemma \ref{l:part1} are called
\emph{prime minors}. If $G(\ell)$ is a finite gap or non-degenerate
leaf then the corresponding minor is said to be \emph{rational}. If
$G(\ell)$ is a Siegel gap then the corresponding degenerate minor is
said to be \emph{irrational}.
\end{defn}

\begin{lem}\label{prime:incomparable}
Two distinct prime minors are not $\succ$-comparable.	
\end{lem}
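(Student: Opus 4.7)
I would argue by contradiction. Suppose $m_1, m_2 \in \uml_d$ are distinct prime minors with $m_1 \succ m_2$; the requirement $m_2 \subsetneq \mathcal{R}(m_1)$ forces $m_1$ to be non-degenerate. By Lemma \ref{l:partialorder}(2), $m_1$ is a leaf of $\lam(m_2)$, hence so is every forward iterate of $m_1$. By Lemma \ref{l:part1}, $\lam(m_2)$ carries an invariant gap (or leaf) $G_2 = \ch(\Ii_2)$ whose vertex set is the minimal invariant set $\Ii_2 \subset \overline{H(\ell_2)}$; in the rational case $m_2$ is an edge of $G_2$, while in the Siegel case $m_2$ is a single vertex of $G_2$.

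The plan is to locate $m_1$ geometrically with respect to $G_2$ and rule out all possibilities. First I would show that $m_1$ is not an edge of $G_2$: in the rational case, $m_1 = \sigma_d^i(m_2)$ for some $i$ with $\sigma_d^i(m_2) \neq m_2$ would make the forward orbits of $m_1$ and $m_2$ coincide as sets, forcing their (shortest) minor elements to agree, i.e.\ $m_1 = m_2$, a contradiction; in the Siegel case every edge of $G_2$ is (pre)critical, so the orbit of $m_1$ would eventually collapse to a point, contradicting the existence of the non-degenerate invariant gap/leaf that primeness of $m_1$ requires in $\lam(m_1)$. Since $G_2$ is a gap of $\lam(m_2)$, $m_1$ can neither cross any edge of $G_2$ nor lie in its interior. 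Writing $\disk \setminus G_2 = \bigsqcup_{e}\mathcal{R}(e)$ as the disjoint union of the outer caps over the edges $e$ of $G_2$, I conclude that $m_1 \subset \mathcal{R}(e)$ for some edge $e$ of $G_2$.

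Two final cases remain. If $e = m_2$ (only possible in the rational case), then $m_1 \subset \mathcal{R}(m_2)$ gives $m_2 \succ m_1$; combined with $m_1 \succ m_2$ and $m_1 \neq m_2$, this violates the antisymmetry of $\succ$. If instead $e \neq m_2$, then $\mathcal{R}(m_1) \subset \mathcal{R}(e)$ forces $m_2 \subset \mathcal{R}(e)$, so the endpoints of $m_2$ (or $m_2$ itself, in the Siegel case) must lie in the closed hole $\overline{H(e)}$. But $\overline{H(e)} \cap \Ii_2$ consists only of the two endpoints of $e$: in the rational case, a direct inspection of the cyclic orbit of $m_2$ in $\Ii_2$ shows the endpoints of $m_2$ cannot be endpoints of any $e \neq m_2$; and in the Siegel case $m_2 = \sigma_d(\ell_2)$ is a two-sided accumulation point of $\Ii_2$ and hence not a gap endpoint of $\Ii_2$. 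Either way we reach a contradiction. The main obstacle I expect is this final Siegel-case verification, which ultimately rests on the fact that both endpoints of the critical gap $\ell_2$ are approached by vertices of $\Ii_2$ from their outer sides, so their common $\sigma_d$-image $m_2$ inherits two-sided accumulation.
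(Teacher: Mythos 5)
Your route is genuinely different from the paper's (which passes to the corresponding majors $M_m$, $M_{m'}$, slides a common critical chord $\ell$ of length $\tfrac{1}{d}$ behind both, and deduces that both orbits live in $\overline{H(\ell)}$, so the two minors share a rotation number, hence a period, hence a length, hence coincide). Using Lemma~\ref{l:partialorder}(2) to place $m_1$ as a leaf of $\lam(m_2)$ and then locating it relative to $G_2 = \ch(\Ii_2)$ is a reasonable alternative, but there is a gap in the geometry that your case analysis does not survive.

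The decomposition $\disk \setminus G_2 = \bigsqcup_e \mathcal{R}(e)$ is false for the one edge that matters. Write $M_2 := M_{m_2}$ for the edge of $G_2$ that faces the origin. By construction $\Ii_2 \subset \overline{H(M_2)}$, so $\mathcal{R}(M_2)$ \emph{contains} $G_2$ (and all the other outer caps); the component of $\disk\setminus G_2$ adjacent to $M_2$ is the \emph{large} region $\disk\setminus\overline{\mathcal{R}(M_2)}$ containing the origin, not $\mathcal{R}(M_2)$. Now observe where $m_1$ actually has to sit: $m_1 \succ m_2$ means $m_1$ separates $m_2$ from the origin, and since $m_1$ does not enter the interior of $G_2$, the only possibilities are $m_1 = M_2$ or $m_1 \subset \disk\setminus\overline{\mathcal{R}(M_2)}$ (your cases $e = m_2$ and $e \neq m_2, M_2$ are already impossible by exactly the argument you give). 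This center-side region is precisely what your decomposition omits, so your analysis rules out everything except the case that must occur. Ruling it out requires invoking the length constraint: if $m_1 \succ M_2$ strictly then $H(m_1)\supsetneq H(M_2)$ gives $|m_1| > |M_2| \ge \tfrac{1}{d+1}$, contradicting $|m_1| \le \tfrac{1}{d+1}$ from Lemma~\ref{length:function}; and if $m_1 = M_2$ then $|m_1| = |M_2| = \tfrac{1}{d+1}$, whence $|m_2| = \psi(\tfrac{1}{d+1}) = \tfrac{1}{d+1} = |m_1|$, but two disjoint chords with $m_1 \succ m_2$ and $|m_1| = |m_2|$ satisfy $H(m_2)\subsetneq H(m_1)$, forcing $|m_2| < |m_1|$, a contradiction. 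You never appeal to the bound $|m_1| \le \tfrac{1}{d+1}$, yet it is the engine of the whole argument; without it the proposal does not close. (A smaller issue: the claim ``forcing their (shortest) minor elements to agree'' also needs the same ``$\succ$-comparable plus equal length implies equal'' step to be made explicit, since the minimum length on an orbit is not a priori uniquely attained.)
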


\begin{proof}
Indeed, suppose that $m\succ m'$ are two prime minors. Let $M_m=M$ and
$M_{m'}=M'$ be the corresponding majors. Then by Lemma
\ref{l:minor-fxpt} $M'\succ M$. Choose a critical chord $\ell\succ M'$
of length $\frac{1}{d}$. Since the orbit of $m$ is contained in
$\ol{H(M)}$ and the entire orbit of $m'$ is contained  in $\ol{H(M')}$,
then in fact the orbits of $m$ and $m'$ are contained in $\ol{H(\ell)}$
which implies that both minors are rational and have the same period.
However then by the above they must have the same length and, hence,
must coincide.
\end{proof}

\begin{lem}\label{invariant:unique}
The minimal set $\Ii$ in Lemma \ref{l:part1} is unique	
\end{lem}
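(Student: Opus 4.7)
The plan is to split on whether $\rho(\ell)$ is irrational or rational. In the irrational case almost nothing remains to be done: the proof of Lemma \ref{l:part1}(2) already established that no two edges of the associated invariant Siegel gap share an endpoint, and concluded that $A(\ell)$ itself coincides with the limit set of a single orbit, i.e.\ is minimal. A minimal set contains no proper minimal invariant subset, so $\Ii = A(\ell)$ is automatic.

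All of the real work lies in the rational case $\rho(\ell) = p/q$ with $\gcd(p,q)=1$, where any minimal subset of $A(\ell)$ is a periodic orbit of period exactly $q$ and rotation number $p/q$. Suppose, for contradiction, that $\Ii_1 \ne \Ii_2$ are two such orbits. Because $\si_d$ preserves cyclic order on $A(\ell)$ and acts on each $\Ii_i$ as a shift by $p$ positions out of $q$, a short combinatorial check shows that on the $2q$-element set $\Ii_1 \cup \Ii_2$ the map $\si_d$ is a cyclic rotation by $2p$ positions out of $2q$. Since $\gcd(2p,2q)=2$, this rotation has exactly two cycles of length $q$ — namely $\Ii_1$ and $\Ii_2$ — so the points of $\Ii_1$ and $\Ii_2$ must alternate along the cyclic order on $\uc$.

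The main step is to analyze the induced action on the gaps between consecutive points. I would label the $2q$ points linearly on $\ol{H(\ell)}$ as $z_1 < z_2 < \cdots < z_{2q}$ and set $g_i = z_{i+1} - z_i$ for $i = 1, \dots, 2q-1$, with $g_{2q}$ denoting the complementary arc that wraps through $\uc \sm \ol{H(\ell)}$. Since all $z_j$ lie in an arc of length $1/d$ and there are at least $2q \ge 4$ of them, each ``small'' gap $g_i$ with $i < 2q$ satisfies $g_i < 1/d$, while the ``big'' gap satisfies $g_{2q} \ge 1 - 1/d$. The shift $z_i \mapsto z_{i + 2p \bmod 2q}$ descends to $g_i \mapsto g_{i + 2p \bmod 2q}$, and this splits the $2q$ gaps into two $\si_d$-cycles of length $q$ indexed by the parity of $i$. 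Because $g_{2q}$ lies in the even cycle, the odd-indexed cycle contains only small gaps.

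The crux, and the step I expect to be the main obstacle, is the following crisp observation. On a gap $g < 1/d$ the image length is exactly $dg$ with no modular reduction, because $dg < 1$. Iterating $q$ times around the odd cycle therefore gives $d^q g_1 = g_1$, which forces $g_1 = 0$ and contradicts $z_1 \ne z_2$. The analogous identity along the even cycle allows a modular reduction precisely at the step where the big gap $g_{2q}$ appears, and so escapes any such constraint; the whole argument relies on the alternation of $\Ii_1$ and $\Ii_2$ exactly to produce a cycle of gaps that entirely avoids $g_{2q}$.
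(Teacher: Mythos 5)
Your proof is correct, and it takes a genuinely different route from the paper's.

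The paper passes through the lamination machinery: it invokes Theorem~\ref{minors:dontcross} to know that the two prime minors $m$ and $m'$ cannot cross, uses the shared image point $\si_d(\ell)\in \ol{H(m)}\cap\ol{H(m')}$ to force the minors to be $\succ$-comparable, and then observes that the arc $[x,x']$ between corresponding endpoints would have to stay shorter than $\frac{1}{d}$ for the entire period and return to itself, which contradicts expansion. Your argument, by contrast, never touches minors, majors, or the no-crossing theorem; it works entirely at the level of the rotation set $A(\ell)$. You observe that any two minimal sets are $q$-cycles with a common rotation number, that $\si_d$ acts on the $2q$ points as a cyclic rotation by $2p$ (so the two orbits must alternate because $\gcd(2p,2q)=2$ forces exactly two orbits, the even- and odd-indexed points), and that the odd-indexed gap cycle therefore avoids the one ``long'' complementary arc; pure multiplication of lengths along that cycle then gives $d^q g_1=g_1$, hence $g_1=0$. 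This is a self-contained, elementary argument about circle rotation sets; the paper's version is shorter but leans on the substantial Theorem~\ref{minors:dontcross} and on the characterization of prime minors. Interestingly, the two proofs converge at the very end: both ultimately produce an arc of length $<\frac{1}{d}$ that returns to itself after a full period, and the contradiction is the same expansion estimate. What differs is the scaffolding used to exhibit that arc. One small point worth making explicit in your write-up is why the cyclic order on $\Ii_1\cup\Ii_2$ is preserved by $\si_d$ (so that the rotation picture is legitimate): since $\Ii_1$ and $\Ii_2$ are disjoint periodic orbits, at most one endpoint of $\ell$ can lie in $\Ii_1\cup\Ii_2$, and so $\si_d$ is injective on $\Ii_1\cup\Ii_2$ even though it collapses the endpoints of $\ell$; this is exactly the caveat the paper flags for $A(\ell)$ and is worth one sentence.
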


\begin{proof}
Notice that if case (2) of Lemma \ref{l:part1} holds, then the claim is
immediate. So, assume that $\rho(\ell)$ is rational and show that $\Ii$
is well-defined. Suppose otherwise. Then there are two prime rational
minors $m$ and $m'$ generated by minimal invariant sets $\Ii\subset
\ol{H(\ell)}$ and $\Ii'\subset \ol{H(\ell)}$. Recall that by Theorem
\ref{minors:dontcross} $m$ and $m'$ do not cross, but might have a
common endpoint.
We claim that $m$ and $m'$ are $\succ$-comparable. Indeed, for geometric
reasons $\si_d(\ell)\in \ol{H(m)}\cap \ol{H(m')}$. It follows that the only way
$m$ and $m'$ are not $\succ$-comparable is when $\si_d(\ell)=y$ is the
terminal point of one of them and the initial point of the other one.
However then $y\in \Ii\cap \Ii'$, and since these sets are minimal, they coincide, 
a contradiction. Hence, $m$ and $m'$ are $\succ$-comparable and disjoint. We may assume that
$m=\ol{xy}, m'=\ol{x'y'}$ are of period $n$, and $x<x'<y'<y$. The images of the arc $H(m)$ map onto each
other and stay shorter than $\frac{1}{d}$ until $m$ maps to the corresponding major $M$
and $m'$ maps to the corresponding major $M'$. However then the $\si_d^n$-image of
$[x, x']$ stays shorter than $\frac{1}{d}$, and from that moment on the whole orbit of 
$[x, x']$ gets repeated. However, $[x, x']$ must expand under $\si_d$, a 
contradiction. Thus, $m=m'$. We conclude that $\Ii$, and,
accordingly, $m$, as a function of $\ell$, is unique.
\end{proof}

From now on we denote the minor from Lemma \ref{invariant:unique} by $m(\ell)$.

\begin{lem}\label{l:lineup}
The family of prime minors forms the boundary of a closed convex set
$Q=\ch(Q\cap\uc)$. Moreover, $Q\cap\uc$ is a Cantor set in which fixed
points in $\bd(\umc)$ are not isolated from either side.
\end{lem}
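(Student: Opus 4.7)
The plan is to parametrize prime minors by a rotation-number construction along sliding critical chords. First I parametrize critical chords $\ell$ of length $\frac{1}{d}$ by the position of their initial endpoint in $\uc$, obtaining a parameter circle. Since the $d-1$ $\sigma_d$-fixed points are spaced $\frac{1}{d-1}>\frac{1}{d}$ apart, the critical chords whose closed short arc $\ol{H(\ell)}$ contains a $\sigma_d$-fixed point form $d-1$ pairwise disjoint closed parameter arcs of length $\frac{1}{d}$; on each of these, by Lemma \ref{l:fxpt}, the minimal set is the unique fixed point $f\in\ol{H(\ell)}$ and the prime minor degenerates to $f$. Let $J_1,\dots,J_{d-1}$ be the complementary open parameter arcs of length $\frac{1}{d(d-1)}$, on each of which the prime minor $m(\ell)$ from Lemma \ref{l:part1} carries a well-defined rotation number $\rho(\ell)\in(0,1)$.

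Next I will prove that $\rho$ is continuous and monotone across each $J_i$. With each $\ell\in J_i$ I associate the degree-one circle endomorphism $\hat\sigma_\ell$ obtained by collapsing every edge of $\ch(A(\ell))$ to a point and descending the action of $\sigma_d$, as in the construction of $U(\ell)$ in the proof of Lemma \ref{l:part1}. The family $\{\hat\sigma_\ell\}$ varies continuously and monotonically with $\ell$ in the sense required by the classical Poincar\'e theory, so a lift of $\rho$ sweeps monotonically and continuously from $0$ to $1$ as $\ell$ crosses $J_i$ from the fixed point $f_1$ bounding $J_i$ on one side to the fixed point $f_2$ bounding it on the other. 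Rational values $p/q$ in lowest terms are attained on closed locking subintervals, on which $m(\ell)$ is constant and equal to the unique non-degenerate rational prime minor of period $q$ (uniqueness by Lemma \ref{invariant:unique}); irrational values are attained at single parameters and correspond to degenerate irrational prime minors.

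With this parametrization I assemble the geometric picture. By Lemma \ref{prime:incomparable} distinct prime minors are pairwise $\succ$-incomparable, so the arcs $H(m)$ of distinct non-degenerate prime minors are pairwise disjoint; together with the degenerate irrational minors and the $d-1$ fixed points they line up along $\uc$ in the cyclic order induced from the parameter. The set $Q\cap\uc$ of all endpoints of prime minors is closed by Theorem \ref{t:uml} and the monotonicity of $\rho$, which together force any limit of prime minors to be a prime minor. The non-degenerate prime minors are precisely the chords joining pairs of points of $Q\cap\uc$ with no third point of $Q\cap\uc$ strictly between them along the corresponding arc, so they constitute the non-trivial edges of $\bd(\ch(Q\cap\uc))=\bd(Q)$, and $Q=\ch(Q\cap\uc)$ is closed and convex as required.

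Finally I verify the Cantor structure and accumulation at fixed points. The set $Q\cap\uc$ has empty interior because locking intervals are dense in each $J_i$, so the $H$-arcs of non-degenerate prime minors are densely placed and disjoint from $Q\cap\uc$ except at their endpoints; any arc contained in $Q\cap\uc$ would cross the interior of one such $H$-arc, contradicting Lemma \ref{prime:incomparable}. It has no isolated points because irrational parameters are dense in each $J_i$, and the corresponding degenerate irrational prime minors are points of $Q\cap\uc$ accumulating on every point of $Q\cap\uc$. Each fixed point $f$ is simultaneously the left endpoint of one $J_i$ and the right endpoint of another; as $\ell$ approaches $f$ through either $J_i$, the lift of $\rho$ approaches $0$ or $1$, and the corresponding degenerate prime minors accumulate onto $f$ from the corresponding side of $\uc$, giving accumulation from both sides. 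The main obstacle will be the rigorous justification of continuity and monotonicity of $\rho$, which requires carefully identifying the one-parameter family of degree-one circle endomorphisms to which the classical Poincar\'e theory applies.
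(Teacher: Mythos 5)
Your route is genuinely different from the paper's. The paper proves the lemma by direct elementary arguments: it shows prime minors are pairwise disjoint (if two rational prime minors shared an endpoint $z$, the orbit $B$ of $z$ would have a convex hull $\ch(B)$ with two edges of the same length, contradicting the period-$q$ length formula), then shows that every non-fixed point of $\uc$ is covered by some $H(m)$ or is itself a prime irrational minor, and deduces the Cantor structure by ruling out isolated points and subintervals of $T$, with the accumulation at fixed points coming from the $d$-fold rotational symmetry. You instead parametrize prime minors by the rotation number $\rho(\ell)$ of a sliding critical chord and invoke the Arnold/mode-locking picture; this gives a cleaner structural parametrization, at the cost of having to develop the rotation-number theory.

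There are two real gaps in the proposal. First, you assemble the picture from Lemma~\ref{prime:incomparable}, but that lemma only gives $\succ$-\emph{incomparability}; it does not rule out two rational prime minors sharing an endpoint, and such a shared endpoint would produce an isolated point of $Q\cap\uc$, wrecking the Cantor claim. The paper proves disjointness directly (the argument about $\ch(B)$ above). Your rotation-number framework could in principle give this too (distinct rational rotation numbers, with irrationals between, force the locking intervals and the associated periodic orbits to be separated), but you do not make that argument, and it is not an immediate consequence of density of irrational parameters. Second, the object $\hat\sigma_\ell$ you propose (collapse the edges of $\ch(A(\ell))$ and descend $\sigma_d$) is not the right thing to feed into Poincar\'e theory as a \emph{parametrized family}, because $A(\ell)$ and the collapsing change with $\ell$ in a way that is not a priori continuous. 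What works is to define a single monotone degree-one family $F_\ell:\uc\to\uc$ by $F_\ell=\sigma_d$ on $\ol{H(\ell)}$ and $F_\ell\equiv\sigma_d(i(\ell))$ on the complementary plateau; then $F_\ell$ varies continuously and monotonically in a lift, $\rho(F_\ell)=\rho(\ell)$, and the locking/irrational dichotomy follows from the classical theory. Relatedly, to conclude that the degenerate minors actually accumulate on the fixed point $f$ as $\ell$ crosses out of $J_i$, you need not only $\rho(\ell)\to 0$ (or $1$) but Hausdorff continuity of $\ell\mapsto m(\ell)$; this is true (periodic orbits with rotation number near $0$ lie near $f$), but you should state and prove it, since this is what turns the continuity of $\rho$ into the geometric accumulation statement.
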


\begin{proof}
We claim that prime minors are pairwise disjoint. Indeed, the only way
they intersect is when two rational prime minors $m$ and $m'$ have a
common endpoint $z$ which is the terminal point of, say, $m$ and the
initial endpoint of $m'$. Then $z$ is a periodic point whose orbit $B$
contains the remaining two endpoints of $m$ and $m'$. Hence $m$ and
$m'$ are edges of the same gap $\ch(B)$. However, by the above all
edges of $\ch(B)$ are of distinct length, a contradiction.

As we showed above, prime minors are pairwise non-comparable in the
sense of $\succ$, and pairwise disjoint. Let us show that any non-fixed
point $y\in \uc$ is either a prime irrational minor, or has a prime
rational minor $m$ such that $y\in H(m)$. Indeed, choose two
consecutive fixed point $u$ and $v$ such that the positively oriented
arc $I$ from $u$ to $v$ contains $y$. Then choose an admissible
critical chord $\ell$ with $i(\ell)\in I$ such that $\si_d(\ell)=y$.
Set $m=m(\ell)$. It follows that $y\in H(m(\ell))$ as desired.

We claim that the union $T$ of prime irrational minors and the
endpoints of prime rational numbers is a Cantor set. Indeed, if a
sequence of prime minors converges to a point $y\in \uc$ then either
$y$ is an prime irrational minor, or $y$ is an endpoint of a prime
rational number because if $y$ is ``under'' a prime rational minor $m$
then the minor that converge to $y$ will also be ``under'' $m$, a
contradiction. Thus, $T$ is closed. On the other hand, an isolated
point $z\in T$ may exist only if two prime rational minors meet at $z$,
a contradiction. Since $T$ cannot contain subsegments, $T$ is a Cantor
set. Because of the symmetry, if a fixed point $a$ had been isolated
from one side, it would have been isolated from either side, a
contradiction. It remains to set $Q=\ch(T)$.
\end{proof}

\begin{defn}\label{unicritical:main:cardiod}

The set $Q$ from Lemma \ref{l:lineup} is the \emph{unicritical analog of the Quadratic Main
Cardioid}. We call it the \emph{Unicritical Main Cardioid} and denote it by $\umc$.

\end{defn}
\begin{thm}\label{t:umc}
The set $\umc$ is a gap of $\uml_d$.
\end{thm}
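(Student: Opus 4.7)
My plan is to verify that $\umc$ satisfies the three defining conditions of a gap of $\uml_d$: (a) $\umc = \ch(\umc\cap\uc)$; (b) the chord boundary of $\umc$ lies in $\uml_d$; and (c) $\inter(\umc)$ contains no leaf of $\uml_d$. Condition (a) is immediate from $\umc=\ch(T)$ in Lemma \ref{l:lineup}, and (b) follows because the chord portion of $\bd(\umc)$ consists precisely of the prime rational (non-degenerate) minors, each of which is the minor of the unicritical lamination $\lam(m_\ell)$ built in Lemma \ref{l:part1} and hence a leaf of $\uml_d$ by Definition \ref{d:uml}. The main task is (c).

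I would establish (c) by contradiction. Assume a non-degenerate $m'\in\uml_d$ has $m'\subset\inter(\umc)$; then both endpoints of $m'$ lie in the Cantor set $T$ and $m'$ is not a prime minor. Since rational rotation numbers are dense in $(0,1)$ and Lemma \ref{l:part1} parametrises prime rational minors by them, the endpoints of prime rational minors are dense in $T$. Hence the arc $H(m')$ must contain some prime rational minor $m_p$ in its interior, for otherwise $H(m')$ would be a complementary arc of $T$ in $\uc$ and $m'$ itself would be prime. By Theorem \ref{minors:dontcross}, $m_p$ cannot cross $m'$, so $m_p\subset\mathcal{R}(m')$, i.e.\ $m'\succ m_p$. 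Lemma \ref{l:partialorder}(2) then makes $m'$ a leaf of $\lam(m_p)$ lying strictly above $m_p$ in the $\succ$-order.

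The crux is then to show that the only leaf of $\lam(m_p)$ strictly above $m_p$ in the $\succ$-order is the primary major $M_p=M_{m_p}$. By Lemmas \ref{l:prim-refixed} and \ref{firstreturn:dfixed}, $M_p$ is an edge of the critical Fatou gap $U$ of $\lam(m_p)$, and the center-side region of $M_p$ coincides with the interior of $U$, which contains no leaves. A careful case analysis of the pullback construction in Lemma \ref{condition:minor} (tracking which sibling preimages of each leaf in the orbit of $m_p$ may actually be included in $\lam(m_p)$ without creating a crossing with one of the $d$ majors) rules out any leaf of $\lam(m_p)$ strictly between $m_p$ and $M_p$ in the annular region $\mathcal{R}(M_p)\setminus\mathcal{R}(m_p)$. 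This forces $m'=M_p$; since $|m'|\le\frac{1}{d+1}$ by Lemma \ref{length:function} while $|M_p|\ge\frac{1}{d+1}$, we obtain $|M_p|=\frac{1}{d+1}$, the period-two borderline. But in that case a direct check shows $M_p=m_p$ as chords, so no leaf of $\lam(m_p)$ is strictly above $m_p$ at all, again a contradiction.

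The principal obstacle is the structural rigidity claim that $M_p$ is the unique leaf of $\lam(m_p)$ strictly above $m_p$ in the $\succ$-order. Establishing it cleanly requires enumerating which preimages Lemma \ref{condition:minor} actually inserts at each stage of the pullback, using the Central Strip Lemma to forbid the preimage siblings that would cross the majors, and exploiting the rigid rotational action of the first-return map on $\bd(U)$ described in Lemma \ref{firstreturn:dfixed}; these together pin down the boundary edges of $U$ and exclude any stray leaf in the narrow annulus between $M_p$ and $m_p$.
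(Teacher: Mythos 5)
Your proposal takes a genuinely different route from the paper. The paper argues directly: given a hypothetical minor $m$ strictly inside $\umc$ with major $M_m=\ol{yz}$, it forms the critical chord $\ell=\ol{y\,(y+\frac{1}{d})}$, compares $m$ with the prime minor $m(\ell)$, and derives a contradiction from Corollary~\ref{Corr:CentralStrip} applied to $\si_d^N(M_m)$. Your strategy instead locates a prime rational minor $m_p$ with $m'\succ m_p$ (via the Cantor structure of $\bd(\umc)\cap\uc$), applies Lemma~\ref{l:partialorder}(2) to get $m'\in\lam(m_p)$, and argues that $m'$ must equal $M_p$, at which point a length comparison forces $q=2$ and $M_p=m_p$, a contradiction. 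This is conceptually cleaner: it trades the Central Strip estimate for the rigid $\succ$-structure of the single lamination $\lam(m_p)$.

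However, the sketch of the ``crux'' step is not correct as written. You cite Lemmas~\ref{l:prim-refixed} and~\ref{firstreturn:dfixed} to assert that $M_p$ is an edge of a critical Fatou gap $U$ of $\lam(m_p)$ and that ``the center-side region of $M_p$ coincides with the interior of $U$.'' But $\lam(m_p)$ as supplied by Lemma~\ref{l:partialorder}(2) is the Thurston pullback lamination of Lemma~\ref{condition:minor}/Definition~\ref{d:lamm}, whose critical set is the \emph{finite} $2d$-gon $D(m_p)$, not a Fatou gap; those two lemmas do not apply to it. Moreover, the center-side region of $M_p$ is strictly larger than the interior of any single gap, and the lune $\mathcal{R}(M_p)\setminus\mathcal{R}(m_p)$ does contain many leaves of $\lam(m_p)$ (in the holes of $G$ behind its other edges) — they just do not satisfy $\ell\succ m_p$. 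The claim you actually need is correct, but the clean reason is the gap structure supplied by Lemma~\ref{l:part1}: the invariant finite gap $G=\ch(\Ii)$ has $m_p$ and $M_p$ as edges, with $D(m_p)$ adjacent to $G$ across $M_p$ on the side containing $0\in\disk$. Any leaf $\ell\succ m_p$ with $\ell\ne m_p$ separates $m_p$ from the center and therefore meets a generic segment from $m_p$ to $0$, which traverses $\inter(G)$, then $M_p$, then $\inter(D(m_p))$; since leaves avoid gap interiors and do not cross $M_p$, this forces $\ell=M_p$. With that substituted for your ``careful case analysis of the pullback,'' the argument goes through.
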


\begin{proof}
The edges of $\umc$ are rational prime minors. It also includes prime
irrational minors as its ``stand alone'' vertices. To show that $\umc$
is a gap of $\uml_d$ it suffices to show that there are no minors
inside $\umc$. Suppose that $m=m(\lam)$ is such a minor. By Lemma
\ref{l:minor-fxpt} we can choose the major $M_m=\ol{yz}$ of $\lam$ such
that $M_m\succ m$ and $\si_d(\ell)\in \ol{H(m)}$ for any critical chord
$\ell$ with $\ell\succ M_m$ and $|\ell|=\frac{1}{d}$. By Lemma
\ref{l:minor-fxpt}, $m=\si_d(M_m)=\ol{\si_d(z) \si_d(y)}$. Set
$\ell=\ol{y (y+\frac{1}{d})}$ and consider the minor $m(\ell)$ and the
associated major $M_{m(\ell)}=M(\ell)$.

By definition, $\ell\succ M(\ell)$. By construction, $\ell\succ M_m$.
By Theorem \ref{minors:dontcross} $m$ and $m(\ell)$ do not
cross. Hence, $M(\ell)$ and $M_m$ do not cross either. If $M(\ell)$ is ``under''
$M_m$, then $m$ is ``under'' $m(\ell)$, a contradiction with
$m$ being inside $\umc$. Thus, the only possibility left is
that $M(\ell)$ has $y$ as the initial endpoint.
Then $m$ and $m(\ell)$ share an endpoint
$\si_d(y)$. Since $m$ is inside $\umc$, $m\succ m(\ell)$
and $M(\ell)\succ M_m$. The point $\si_d(y)$ belongs to the minimal set
$\Ii(\ell)$ and has the orbit contained in $\ol{H(\ell)}$.

Since $m$ is inside $\umc$, the point $\si_d(z)$ belongs to a minor $m'\ne
m(\ell)$ (otherwise $m=m(\ell)$). Hence for some $N\ge
2$ the point $\si_d^N(z)$ does not belong to $\ol{H(\ell)}$. We
conclude that $\si_d^N(M_m)$ is a chord with one endpoint
$\si_d^N(y)\in \ol{H(\ell)}$ and the other endpoint $\si_d^N(z)\notin
\ol{H(\ell)}$. However this contradicts Corollary
\ref{Corr:CentralStrip} which describes the dynamics of majors of
laminations. This completes the proof.
\end{proof}

One can transfer Theorem \ref{t:umc} onto Fatou gaps.

\begin{lem}\label{countable:gaps}
Let $\Cc(\lam)$ is a critical Fatou gap of a unicritical lamination
$\lam$ of period $n$. Let $U=\si_d(\Cc(\lam))$. Let $\psi:\bd(U)\to
\uc$ be a monotone map that collapses edges of $U$ to points. Then
$\uml_d$ contains a gap $\umc_U$ whose boundary is
$\psi^{-1}(\bd(\umc))$. In particular, $\umc_U\cap \uc$ is a Cantor
set, and the minor $m$ of $\lam$ has countably infinitely many
immediate $\succ$-successors.
\end{lem}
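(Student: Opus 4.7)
The plan is to transfer the construction of $\umc$ from the external circle to the Fatou gap $U$ via the semiconjugacy $\psi$. By Theorem \ref{Classify:Fatou:gaps}(1), the first-return map $g = \sigma_d^n|_{\bd(U)}$ has degree $d$ (it factors through the critical gap $\Cc(\lam)$ exactly once per period), and $\psi$ semiconjugates $g$ to $\sigma_d|_{\uc}$. I would define $\umc_U$ to be the convex hull of $\psi^{-1}(\umc \cap \uc)$, so that $\bd(\umc_U) = \psi^{-1}(\bd(\umc))$: each edge of $\umc$ pulls back to a chord in $U$ with endpoints in $\bd(U) \cap \uc$, and each vertex of $\umc$ pulls back either to a single point of $\bd(U) \cap \uc$ or to an entire edge of $U$.

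I would then show $\bd(\umc_U) \subset \uml_d$ by running the construction of Lemma \ref{l:part1} inside $U$ in place of $\uc$. Given a prime minor $\mu \in \bd(\umc)$ produced by an invariant minimal set $\Ii \subset \ol{H(\ell)}$ for a critical chord $\ell$ of length $\frac{1}{d}$, I would lift $\ell$ to an arc-structure inside $U$ via $\psi^{-1}$, extract a $g$-invariant minimal set on the corresponding piece of $\bd(U)$, take its convex hull, and single out the leading edge $\tilde\mu$. To realize $\tilde\mu$ as a genuine minor, I would construct a unicritical lamination $\tilde\lam$ by retaining $\lam$ outside the grand orbit of $U$ and grafting inside each preimage of $U$ the $\psi$-pullback of Thurston's saturation lamination $\lam(\mu)$. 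The critical object of $\tilde\lam$ becomes a refinement of $\Cc(\lam)$ obtained by inserting the pullback of the collapsing $2d$-gon $D(\mu)$ into $\Cc(\lam)$; unicriticality of $\tilde\lam$ then reduces to unicriticality of $\lam(\mu)$, and $\tilde\mu$ is its minor.

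Finally, I would argue $\umc_U$ is a gap of $\uml_d$: any hypothetical minor $\nu$ in the interior of $\umc_U$ would have all forward iterates trapped in the $\si_d$-grand orbit of $U$ (by the Central Strip Lemma applied to the corresponding majors, which must be pullbacks of edges of $\Cc(\lam)$), so $\psi$ would project the orbit of $\nu$ to a minor inside $\umc$, contradicting Theorem \ref{t:umc}. The remaining claims follow: $\umc \cap \uc$ is a Cantor set by Lemma \ref{l:lineup}, and $\psi^{-1}$ merely inserts the closed edges of $U$ at countably many collapsed points, so $\umc_U \cap \uc$ is again a Cantor set; $\umc_U$ has countably infinitely many edges, one per rational prime minor of $\umc$, and each such edge is an immediate $\succ$-successor of the leading edge $m$ (it is the unique edge of $\umc_U$ facing $(0,0)$, since all other edges lie strictly on the $\Cc(\lam)$-side of $m$ inside $U$). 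The main obstacle will be the grafting construction of $\tilde\lam$: verifying that inserting the pullback of $\lam(\mu)$ inside $U$ and extending by backward iteration through the grand orbit of $U$ produces a sibling-invariant lamination whose only critical set is the refined $\Cc(\lam)$, so that $\tilde\lam$ is genuinely unicritical with $\tilde\mu$ as its minor.
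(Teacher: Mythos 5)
Your proposal takes essentially the same route as the paper — define $\umc_U$ by lifting $\bd(\umc)$ via $\psi^{-1}$ and appeal to Theorem~\ref{t:umc}, with the added technical content (grafting to realize lifted prime minors, projecting hypothetical interior minors back via $\psi$) spelling out what the paper compresses into ``immediate from the properties of $\psi$.'' Two small corrections: trapping the forward orbit of a hypothetical minor $\nu\subset\umc_U$ inside the grand orbit of $U$ follows simply from the endpoints of $\nu$ lying in $U\cap\uc$ and hence the endpoints of $\si_d^k(\nu)$ lying in $\si_d^k(U)\cap\uc$ (the claim that the majors of $\lam(\nu)$ are ``pullbacks of edges of $\Cc(\lam)$'' is neither needed nor accurate), and to conclude from Theorem~\ref{t:umc} you should also note that the projected chord $\psi(\nu)$ satisfies the minor criterion of Lemma~\ref{condition:minor}. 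The one point the paper does single out — that the endpoints of the lift $m$ of the corresponding fixed point of $\bd(\umc)$ remain non-isolated in $\umc_U\cap\uc$ because fixed points are non-isolated in $\umc$ (Lemma~\ref{l:lineup}) — is implicitly absorbed in your Cantor-set remark and deserves an explicit mention, as it is the one place where $\psi^{-1}$ alters the local topology.
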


\begin{proof}
The existence of the gap $\umc_U$ is immediate from Theorem \ref{t:umc}
and properties of the map $\psi$ which semiconjugates $\si_d^n|_U$ and
$\si_d$. The only thing that needs to be noticed is that when we lift
$\bd(\umc)$ to $\umc_U$ using $\psi$, one fixed point in $\bd(\umc)$
corresponding to the minor $m$ of $\lam$ is lifted to a leaf $m$ which
creates isolation on the appropriate sides of the endpoints of $m$ in
$\umc_U\cap \uc$. However by Lemma \ref{l:lineup} fixed points in
$\umc$ are non-isolated on either side, hence the endpoints of $m$ are
non-isolated in $\umc_U\cap \uc$ as desired.
\end{proof}

Abusing the language, we will call a gap similar to $\umc_U$ from Lemma
\ref{countable:gaps} a \emph{copy} of $\umc$.

\begin{lem}\label{l:molecule}
If $m$ is a periodic minor then there are two copies of $\umc$ that
share $m$. Any non-degenerate minor with a periodic endpoint is a
periodic minor.
\end{lem}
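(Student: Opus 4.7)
I would prove the two claims separately, starting with the second (shorter) one, since it illuminates the rigidity needed for the first.

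\textbf{Plan for the second claim.} Let $m=\ol{ab}$ be a non-degenerate minor with $a$ periodic of period $p$. By Lemma~\ref{l:fxpt}(3) the endpoint $a$ cannot be fixed, so $p\ge 2$. I work inside $\lam(m)$, whose critical set is the $2d$-gon $D(m)=\si_d^{-1}(\{a,b\})$ with $d$ long edges (majors, of length $(1-|m|)/d\ge 1/(d+1)$) and $d$ short edges (of length $|m|/d$). The leaf $\si_d^{p-1}(m)$ has one endpoint $\si_d^{p-1}(a)\in\si_d^{-1}(a)$, i.e.\ a vertex of $D(m)$. Since $|m|\le 1/(d+1)$, the length function argument of Lemma~\ref{oscillating} gives $|\si_d^{p-1}(m)|\ge|m|$, so $\si_d^{p-1}(m)$ is not a short edge of $D(m)$; because $\lam(m)$ is a lamination and $D(m)$ is a critical gap, it is not a diagonal of $D(m)$ either. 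The remaining possibility is that $\si_d^{p-1}(m)$ is forced to be a major of $D(m)$, which gives $\si_d^p(m)=m$ as required. The case one still has to rule out is that $\si_d^{p-1}(b)$ falls in the open hole of $D(m)$ adjacent to $\si_d^{p-1}(a)$ on the major side; here I would apply Lemma~\ref{l:minor-fxpt} (case~(2), since $a$ is periodic but not fixed) to $M_m$, and use the fact that $\si_d$ maps this hole bijectively onto the long arc complementary to $m$ to track the orbit and obtain a contradiction with Corollary~\ref{Corr:CentralStrip} forbidding re-entry of iterates of $m$ into the central strip $C(\lam(m))$ except along majors.

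\textbf{Plan for the first claim: the inner copy.} Let $m$ be a periodic minor of period $n$. Starting from $\lam(m)$, I would build a lamination $\lam^F(m)$ containing $\lam(m)$ whose critical set is a Fatou gap $U_c$ of period $n$ rather than the polygon $D(m)$. The construction is the one used in Lemma~\ref{nontrivial:object}(3): attach a tree of immediate decorations to every edge of $D(m)$ by repeatedly pulling back $D(m)$ itself along the orbit, and then close up. Because $m$ is periodic, Theorem~\ref{prop:sat} makes this pullback construction go through and produces the desired Fatou gap $U_c$ of period $n$ with $M(\lam^F(m))$ as its refixed major (as guaranteed by Lemma~\ref{l:prim-refixed}). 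Setting $U=\si_d(U_c)$, Lemma~\ref{countable:gaps} yields a copy $\umc_U$ of $\umc$ inside $\uml_d$ having $m$ as its leading edge. This is the copy of $\umc$ lying on the $H(m)$-side of $m$.

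\textbf{Plan for the first claim: the outer copy.} The other copy must lie on the opposite side of $m$ and must contain $m$ as one of its non-leading prime rational edges. I would obtain it as follows. Apply the first part of the lemma (already proved for the inner copy) recursively: either $m$ is a prime rational edge of $\umc$ itself (in which case $\umc$ is the outer copy), or else there exists a uniquely determined periodic minor $m^*\succ m$ of strictly smaller period, such that $m$ appears as a prime rational edge of $\umc_{U^*}$, where $U^*=\si_d(U_c^*)$ and $U_c^*$ is the critical Fatou gap of $\lam^F(m^*)$. The existence of this parent $m^*$ follows from the rotation structure inside $U_c$: by Lemma~\ref{firstreturn:dfixed} the first return of $U_c$ has exactly $d$ fixed points; one pair corresponds to the refixed major $M$ (and hence to $m$), while the remaining $d-1$ fixed points determine, via the semi-conjugacy $\psi_{U^*}$ of Theorem~\ref{Classify:Fatou:gaps}, the rotation numbers of the neighboring satellite minors of $m$ seen inside some larger gap $U^*$. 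The outer copy $\umc_{U^*}$ is then precisely the copy of $\umc$ of which $m$ is a prime rational (non-leading) edge. The two copies together share $m$.

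\textbf{Main obstacle.} The delicate part is producing the outer copy, i.e.\ identifying the correct parent lamination $\lam^F(m^*)$ and verifying that $m$ really occurs as a rational prime edge of $\umc_{U^*}$ rather than as a diagonal or as an edge of a non-$\umc$-type gap. This requires combining Lemma~\ref{firstreturn:dfixed}, the semi-conjugacy of Theorem~\ref{Classify:Fatou:gaps}, and Theorem~\ref{t:umc} (applied inside $U^*$ via $\psi_{U^*}$) to transport the structure of $\umc$ across the rotation data of the Fatou gap. The second claim is shorter in total but its ``exterior case'' requires the most care.
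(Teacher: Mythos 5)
Your plan reverses the paper's logical order: the paper first establishes the concatenated-copies-of-$\umc$ picture (the first claim) and then reads off the second claim as a corollary, whereas you try to prove the second claim directly by dynamics on $D(m)$ and then build the copies of $\umc$. Both halves of your plan have genuine gaps.

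\textbf{Second claim.} Your case analysis for $\si_d^{p-1}(m)$ is essentially right until the last step, but the contradiction you cite is pointed at the wrong case. The vertex $a_j=\si_d^{p-1}(a)$ of $D(m)$ has two adjacent holes: a \emph{short} hole (behind a short edge of $D(m)$), which lies \emph{inside} the central strip $C(\lam(m))$, and a \emph{long} hole (behind a major), which lies \emph{outside} the central strip, in $\mathcal{R}(M_i)$. Corollary~\ref{Corr:CentralStrip} kills the short-hole case immediately, because $\si_d^{p-1}(m)\subset C(\lam(m))$ forces $\si_d^{p-1}(m)$ to be a major, whereas one of its endpoints is interior to a hole. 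But you explicitly direct the contradiction at the \emph{major-side} hole, and there $\si_d^{p-1}(m)$ lies entirely in $\ol{\mathcal{R}(M_i)}$, outside the central strip, so Corollary~\ref{Corr:CentralStrip} says nothing. In that case $\si_d$ carries $\mathcal{R}(M_i)$ onto the large complementary region of $m$, so $\si_d^p(m)=\ol{a\,c}$ with $c$ in the long arc $(b,a)$, and the chord is at least as long as $m$, consistent with all the constraints you invoke. Some additional structural input is required (the paper uses precisely the Cantor boundary of the adjacent copies of $\umc$ for this), and Lemma~\ref{l:minor-fxpt}(2) on $M_m$ by itself does not close it: $a_j$ need not even be an endpoint of $M_m$, only of some major $M_i$, and the lemma's conclusions are about the arc $\ol{H(M_m)}$, not about the orbit of $b$ once it leaves it.

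\textbf{First claim, outer copy.} The inner copy $\umc_U$ is obtained as in the paper via Lemma~\ref{countable:gaps}. For the outer copy, however, you claim that the parent $m^*$ is ``determined by the rotation structure inside $U_c$'' together with Lemma~\ref{firstreturn:dfixed} and Theorem~\ref{Classify:Fatou:gaps}. This cannot work as stated: $U_c$ and $U=\si_d(U_c)$ sit on the $H(m)$ side of $m$, so everything you can read off from the first-return map on $\bd U_c$ and from $\psi_{U}$ governs minors \emph{under} $m$, i.e.\ the inner copy, not the parent $m^*\succ m$, which lives on the opposite side. What is actually needed is the argument the paper uses: start from the prime minors (edges of $\umc$), repeatedly apply Lemma~\ref{countable:gaps} to build a nested tower of copies of $\umc$, and then argue — via the Fatou gap $W$ of Lemma~\ref{nontrivial:object} and the fact that the relevant boundaries are Cantor sets — that every periodic minor is trapped between two consecutive copies in this tower. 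The ``parent'' is located by descending through the tower, not by reading rotation numbers inside $U_c$. Until you establish that the tower actually reaches $m$, the outer copy is not produced.

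In short, your decomposition is a genuinely different route, but as written both prongs rely on steps that are unjustified: in the second claim you aim the Central Strip Lemma at a configuration it does not govern, and in the first claim you try to determine the parent of $m$ from data that lives on the wrong side of $m$.
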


\begin{proof}
Our analysis implies that if $m$ is a non-degenerate prime minor of
period $n$, then in the associated lamination $\lam$ there exists a
finite invariant gap $G$ such that $m$ is one of its edges. The
corresponding major $M=M_m\succ m$ is the edge of $G$ that cuts the
rest of $G$ from the center of the circle (equivalently, from all fixed
points of $\si_d$). At $M$ there is a critical Fatou gap $\Cc(\lam)$ of
$\lam$ of period $n$. Set $U=\si_d(\Cc(\lam))$. Then, by Lemma
\ref{countable:gaps} there is a counterpart $\umc_U$ of $\umc$ in $U$
such that $m$ corresponds to the degenerate minor in $\umc$ associated
with angle $0$, and otherwise the correspondence between $\umc$ and
$\umc_U$ is a homeomorphism. Moreover, $\umc_U$ is itself a gap of
$\uml_d$.

It follows that if we repeatedly apply the same construction, we will
discover countable concatenations of copies of $\umc$ whose existence
is established in Lemma \ref{countable:gaps}. This completely describes
all minors with periodic endpoints. Indeed, let $m=\ol{ab}=m(\lam)$ be
the minor of a unicritical lamination $\lam$ and $a$ be a periodic
point of period $k$. By Lemma \ref{nontrivial:object} there exists a
critical Fatou gap $W$ of period $k$ such that $\si_d(W)$ has an edge
$\ell$ of period $k$ and an endpoint $a$. By the previous paragraph,
$\ell$ is a minor and there are two copies of $\umc$ adjacent at each
other along $\ell$. Moreover, the boundaries of these copies of $\umc$
intersected with $\uc$ are Cantor sets. It follows that $m=\ell$ as in
$\uml_d$ there is no room for any other minor with endpoint $a$.
\end{proof}


\begin{defn}\label{d:towards}
Let $\ell\ne \ell'$ be leaves of a lamination $\lam$. If $\si_d(\ell)$ and $\ell'$ are contained in the same component of $\cdisk\sm \ell$ we say that $\ell$ maps \emph{toward} $\ell'$, otherwise that it maps \emph{away from} $\ell'$.
\end{defn}

We will need the next lemma which is in fact a version of the fixed
point theorem from continuum theory.

\begin{lem}\label{l:fxpt-1}
Suppose that $\ell'$ and $\ell$ are two disjoint leaves of a
lamination $\lam$. Moreover, suppose that $\si_d^k$ maps both leaves either toward each other or away from each other. Then there exists a $\si^k_d$-invariant gap or leaf
$G$ that separates $\ell'$ and $\ell$. If $\ell$ and $\ell'$ map away from each other then there exists a leaf $\oy$ that separates them and maps to itself.
\end{lem}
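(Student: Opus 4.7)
The plan is to split into two cases after setting notation. Label the endpoints so that $\ell=\ol{ab}$, $\ell'=\ol{cd}$ with cyclic order $a,c,d,b$ on $\uc$; let $R\subset\cdisk$ be the closed region bounded by $\ell,\ell'$ and the two boundary arcs $I_1=[a,c]$, $I_2=[d,b]$, and write $f=\sigma_d^k$. In the \emph{away} case I will run a sharp intermediate-value argument on $I_1$ and $I_2$ to produce two $f$-fixed points and hence a fixed chord separating $\ell$ and $\ell'$. In the \emph{toward} case this direct approach fails, and I will invoke the continuum-theoretic fixed point theorem alluded to in the statement, applied to the space of separating leaves/gaps of $\lam$ contained in $\overline R$.

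For the away case, the hypothesis gives $f(a),f(b)\in\ol{(b,a)}$ and $f(c),f(d)\in\ol{(c,d)}$ (CCW arcs). Lifting $f$ to $\tilde f(x)=d^k x$ on $\R$ using lifts $\tilde a<\tilde c<\tilde d<\tilde b<\tilde a+1$, the hypothesis forces integers $N,M$ with $\tilde f(\tilde a)\in[\tilde b+N-1,\tilde a+N]$ and $\tilde f(\tilde c)\in[\tilde c+M,\tilde d+M]$. Monotonicity of $\tilde f$ combined with $\tilde b>\tilde d$ then yields $M\ge N$, so the continuous strictly increasing function $h(x)=\tilde f(x)-x-N$ satisfies $h(\tilde a)\le 0\le h(\tilde c)$ and has a unique zero $\tilde y\in[\tilde a,\tilde c]$; its projection $y\in I_1$ is $f$-fixed. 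A symmetric argument on $I_2$ yields an $f$-fixed point $y'\in I_2$. The chord $\oy:=\ol{yy'}$ separates $\ell,\ell'$ (its endpoints lie on opposite boundary arcs of $R$) and is pointwise fixed by $f$, so $f(\oy)=\oy$. This is the fixed leaf required by the second conclusion, and taking $G=\oy$ also furnishes the invariant separating object.

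For the toward case, the endpoint-displacements on $I_1$ and $I_2$ no longer straddle an integer, so IVT breaks down. However, here $f$ acts ``inward'': $f(\ell)$ sits on the $\ell'$-side of $\ell$ and $f(\ell')$ on the $\ell$-side of $\ell'$. After disposing of the trivial subcase where $R$ contains no leaf of $\lam$ (then $\overline R$ is itself a gap of $\lam$ which is $f$-invariant by an edge-swap argument using the non-crossing of leaves), I plan to consider the space $\mathcal N$ of closed connected subsets of $\overline R$ that separate $\ell$ from $\ell'$, equipped with the Hausdorff metric: $\mathcal N$ is nonempty and compact, and its subfamily $\mathcal N_\lam$ consisting of leaves and gaps of $\lam$ is closed and carries a partial action of $f$ by Thurston invariance (leaves go to leaves, gaps to gaps). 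A minimal nonempty closed $f$-invariant subset of $\mathcal N_\lam$, combined with the continuum-theoretic fixed point theorem, yields an $f$-fixed element of $\mathcal N_\lam$, which is the required $\sigma_d^k$-invariant leaf or gap $G$.

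The hard part is the toward case: I must rule out (or absorb into the argument) the scenario where $f(\ell)$ overshoots $\ell'$ to land on its far side, and then verify that the minimal invariant continuum produced by the fixed point theorem is genuinely a leaf or gap of $\lam$ rather than an abstract closed set. Both obstacles are to be handled by iterative use of the non-crossing property of $\lam$ together with the closedness of the family of leaves under Hausdorff limits, so that a limit of separating leaves of $\lam$ remains either a leaf of $\lam$ or a diagonal/boundary piece of a gap of $\lam$.
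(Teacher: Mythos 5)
Your plan has the right flavor --- an intermediate-value / fixed-point argument --- but in both cases it delivers less than what the lemma actually asserts, which is an invariant object \emph{of the lamination $\lam$}, not merely an invariant chord.

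In the ``away'' case your lift-and-IVT computation on $I_1$ and $I_2$ is fine and does produce two $\si_d^k$-fixed points $y\in I_1$, $y'\in I_2$. But the chord $\oy=\ol{yy'}$ is at this point just a chord with periodic endpoints; nothing in the argument shows it is a leaf of $\lam$, and in general it will cross leaves of $\lam$. You declare it ``the fixed leaf required by the second conclusion,'' but that is exactly the claim that needs proof, and it is what the lemma is used for later: in the proof of Theorem~\ref{Lavaurs:lemma} the lemma is cited to produce ``a leaf $t$ \emph{of $\lam$}'' separating two given leaves. Producing a bare $\si_d^k$-fixed chord does not close that gap; you would still have to show that some leaf of $\lam$ between $\ell$ and $\ell'$ is $\si_d^k$-fixed, which is essentially the whole content of the statement.

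In the ``toward'' case the sketch is too loose to verify. The family $\mathcal N_\lam$ of separating leaves and gaps of $\lam$ is \emph{not} $f$-invariant: the $\si_d^k$-image of a separating leaf need not separate $\ell$ from $\ell'$ (it can leave $\ol R$ entirely), so there is no ``partial action of $f$ by Thurston invariance'' on $\mathcal N_\lam$ to which a minimal-set argument or a continuum fixed point theorem could be applied in the way you describe. The paper's argument, which handles both cases at once, is more direct and stays inside $\lam$: take the linearly ordered family of \emph{separating leaves of $\lam$} between $\ell'$ and $\ell$, and to each such leaf $n$ assign the direction (toward $\ell$, toward $\ell'$, or fixed) in which $\si_d^k$ moves it; this is well defined since $\si_d^k(n)$ is a leaf of $\lam$ and hence cannot cross $n$. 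The hypothesis fixes the directions near $\ell'$ and near $\ell$ so that there must be a switch somewhere, and at a switch one finds either a separating leaf of $\lam$ mapped to itself or a gap $G$ of $\lam$ with $\si_d^k(G)=G$. In the ``away'' subcase a degree argument on $\bd(G)$ shows $G$ would be strictly expanded unless one of its edges is $\si_d^k$-fixed, which yields the required leaf $\oy$ in $\lam$. That last step is what your IVT construction is missing.
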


\begin{proof}
Call leaves of $\lam$ that separate $\ell'$ and $\ell$ \emph{separating}. Separating leaves are ordered, say, from $\ell'$ to $\ell$ family. By our assumptions there must exist a place in this family where the direction of the movement of leaves switches. Thus, either there exists a separating leaf that maps to itself, or there exists a gap $G$ separating $\ell'$ and $\ell$ such that $\si_d^k(G)=G$. If leaves $\ell'$ and $\ell$ map away from each other, then the latter option is impossible unless there is an edge of $G$ that maps to itself because otherwise $G$ would have strictly covered itself under $\si_d^k$. 
\end{proof}

Let us now study properties of a minor $m$ as a leaf of $\uml_d$.

\begin{lem}\label{l:root}
Let $m\in \uml_d$ be a non-degenerate non-periodic minor. If there is a
sequence of leaves $\ell_i\in \lam(m)$ with $\ell_i\succ m$ and
$\ell_i\to m$, then $m$ is the limit of periodic minors $n_j$ with
$n_j\succ m$ for any $j$.
\end{lem}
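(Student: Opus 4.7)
The plan is to produce, for each large~$i$, a periodic minor $n_i \in \uml_d$ satisfying $\ell_i \succ n_i \succ m$; since $\ell_i \to m$ from outside, the $n_i$'s then also approach $m$ from outside.

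I would first analyze the dynamical behavior of iterates of $m$ and $\ell_i$ under $\si_d$ inside $\lam(m)$. Since $m$ is non-periodic yet approached from outside by leaves $\ell_i \in \lam(m)$ with $\ell_i \succ m$, and since $\lam(m)$ is built by pulling back $m$ and $D(m)$, the orbit of $m$ itself must be recurrent near $m$ (a ``Cremer-like'' scenario). By Corollary~\ref{Corr:CentralStrip} and Lemma~\ref{oscillating}, this orbit is confined to the annulus $C(m)\setminus C(M)$ and no iterate of $m$ is shorter than $m$; consequently, a leaf $\ell_i$ close to $m$ on the outside shadows the orbit of $m$ for many iterations before diverging.

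Next, I would apply Lemma~\ref{l:fxpt-1} to the disjoint leaves $\ell_i$ and $m$ in $\lam(m)$ under an appropriately chosen iterate $\si_d^{k_i}$. The index $k_i$ is selected using the recurrence: at such a moment $\si_d^{k_i}(m)$ returns close to $m$ (hence in the component of $\cdisk\sm m$ containing $\ell_i$), while $\si_d^{k_i}(\ell_i)$, by the shadowing, also returns close to $m$ (hence inside $\mathcal{R}(\ell_i)$, which contains $m$). Both leaves thus map toward each other, and Lemma~\ref{l:fxpt-1} yields a $\si_d^{k_i}$-invariant gap or leaf $G_i$ separating $\ell_i$ and $m$. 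If $G_i$ is a leaf, set $n_i:=G_i$; if $G_i$ is a gap, let $n_i$ be its periodic edge closest to $m$ (guaranteed since a $\si_d^{k_i}$-invariant gap has a periodic ``leading'' edge facing $(0,0)$). In either case, $n_i$ is a periodic leaf with $\ell_i\succ n_i \succ m$ and period dividing $k_i$.

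To finish, I would check $n_i\in\uml_d$ via Lemma~\ref{condition:minor}: $|n_i|\le\tfrac{1}{d+1}$ since $n_i$ sits between $m$ and $\ell_i$ with $|m|\le\tfrac{1}{d+1}$ and $|\ell_i|\to|m|$; non-crossing of forward images of $n_i$ follows from the non-crossing property inside $\lam(m)$; no image of $n_i$ is shorter than $n_i$ by Lemma~\ref{oscillating} applied to the periodic orbit of $n_i$; and the forward orbit of $n_i$ does not cross its $d$ long pullbacks by the Central Strip Lemma adapted to $n_i$. As $\ell_i\to m$ squeezes $n_i$ against $m$, we obtain the required sequence of periodic minors converging to $m$ from outside. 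The main obstacle is the precise choice of $k_i$ synchronizing the recurrent returns of the orbit of $m$ with the position of $\ell_i$ so that both leaves genuinely map ``toward'' each other under the same iterate; a secondary subtlety is that the ``toward'' branch of Lemma~\ref{l:fxpt-1} produces only a $\si_d^{k_i}$-invariant gap or leaf, and in the gap case one must identify an actually periodic (not merely pre-periodic) separating edge, which is where the detailed structure of gap edges enters.
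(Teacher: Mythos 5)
Your overall plan --- invoke Lemma~\ref{l:fxpt-1} under a well-chosen iterate to obtain a $\si_d^{k_i}$-invariant gap or leaf separating $\ell_i$ and $m$, extract a periodic leaf from it, and check that it is a minor via Lemma~\ref{condition:minor} --- is the same as the paper's. However there are two genuine gaps in your execution.

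First, the choice of $k_i$. You try to manufacture the hypothesis of Lemma~\ref{l:fxpt-1} out of an asserted ``recurrence'' of the orbit of $m$ near itself and a ``shadowing'' of this orbit by the orbit of $\ell_i$; you acknowledge synchronizing these returns as the main obstacle, but leave it unresolved. Neither recurrence nor shadowing is established anywhere, and in fact neither is needed. The paper's key observation is that $\lam(m)$ is Thurston's pullback lamination, so one may take the $\ell_i$'s to be pullbacks of majors; then there is $k_i$ with $\si_d^{k_i}(\ell_i)=m$ \emph{exactly}, which already places $\si_d^{k_i}(\ell_i)$ in $\mathcal{R}(\ell_i)$. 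On the other side, for \emph{every} iterate $\si_d^{k}(m)\ne m$ (non-periodicity) and $\si_d^{k}(m)\notin\mathcal{R}(m)$ (no image of a minor is shorter than the minor), so $\si_d^{k_i}(m)$ lies in the component of $\cdisk\sm m$ containing $\ell_i$. Both leaves therefore satisfy the hypothesis of Lemma~\ref{l:fxpt-1}, with no recurrence invoked at all.

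Second, verifying that the separating periodic leaf is a minor. You take $n_i$ to be the edge of $G_i$ facing $m$ and assert ``no image of $n_i$ is shorter than $n_i$ by Lemma~\ref{oscillating} applied to the periodic orbit of $n_i$.'' But Lemma~\ref{oscillating} only bounds lengths of iterates from below by $|m(\lam)|$, not by $|n_i|$, and the edge facing $m$ need not be the shortest leaf in its own orbit, so this step does not hold as stated. The paper avoids the problem by iterating the leading edge $\bar g$ and replacing it with an image $\bar h$ chosen so that no further image of $\bar h$ lies ``under'' $\bar h$; Lemma~\ref{oscillating} is used only to guarantee that intermediate images still separate $G$ from $m$, and only the refined $\bar h$ is shown to satisfy the conditions of Lemma~\ref{condition:minor}. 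Without an analogous minimality choice within the periodic orbit, your verification does not go through.
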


\begin{proof}
By Thurston's pullback construction the assumptions imply that we may
assume that all $\ell_i$'s are pullbacks of majors. Hence there is a
sequence of numbers $k_i\to \infty$ such that $\si_d^{k_j}(\ell_j)=m$.
On the other hand $\si_d^{k_j}(m)\succ m, \si_d^{k_j}(m)\ne m$ by
properties of minors and because $m$ is not periodic. Thus,
$\si_d^{k_j}$ maps $\ell_j$ and $m$ in opposite directions.

By Lemma \ref{l:fxpt-1} there exists a $\si_d^{k_j}$-invariant gap or
leaf of $\lam(m)$ that separates $m$ and $\ell_j$. Now, consider the
gap (or leaf) $G$. It has a leaf $\bar g$ that ``faces'' $m$ (so that
$\bar g$ separates $m$ and the rest of $G$). We may also assume that
the two arcs between the endpoints of $\bar g$ and the endpoints of $m$
are shorter than $|m|$. Consider the forward orbit of $\bar g$. By
Lemma \ref{oscillating} the length of any image of $\bar g$ is at least
$|m|$. Hence $\bar g\succ \si_d^i(\bar g)$ for some $i$ then
$\si_d^i(\bar g)$ separates $g$ and $m$ (any other location of
$\si_d^i(\bar g)$ would imply that the length of $\si_d^i(\bar g)$ is
smaller than $|m|$). Since $\bar g$ is periodic, we may assume that
some image $\bar h$ of $\bar g$ separates $m$ and $G$ and has no images
$\si_d^j(\bar h)$ such that $\bar h\succ \si_d^i(\bar h)$.

It is easy to see that the $d$ pullbacks $H_1, \dots, H_d$ of $\bar h$
that are leaves of $\lam(m)$ are located outside of the Central Strip
$C(m)$. Forward images of $\bar h$ cannot cross those pullbacks. By
Lemma \ref{condition:minor} the leaf $\bar h$ is a minor. Since it is
periodic, it is a part of a copy of $\umc_U$ of $\umc$ for the
appropriate Fatou gap $U$.
\end{proof}

Consider now 
non-degenerate and non-periodic minors.

\begin{lem}\label{l:ndnp-minor}
Let $\lam$ be a unicritical lamination with non-degenerate and
non-periodic minor. Then there exists a lamination $\lam'\subset \lam$
with finite critical set $\Cc(\lam)$ such that the set
$\si_d(\Cc(\lam'))=H'$ is approached by leaves of $\lam'$ from all
sides and one of the following two cases holds:

\begin{enumerate}

\item $\lam'=\lam$;

\item the set $\Cc(\lam')$ is a gap with $dn, n\ge 3$ edges and
    $\lam$ is obtained from $\lam'$ by inserting $d$ sibling edges
    in $\Cc(\lam')$ so that $\lam$ has a critical $2d$-gon $G$
    inside $\Cc(\lam')$ and then pulling $G$ back along the
    backward orbit of $\Cc(\lam')$ in $\lam'$.
\end{enumerate}

\end{lem}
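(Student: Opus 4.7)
The plan is to analyze the structure of the critical set $\Cc(\lam)$ and bifurcate according to whether $\Cc(\lam)$ coincides with the $2d$-gon $D(m) = \ch(\si_d^{-1}(\{a,b\}))$ (writing $m = \ol{ab}$) or strictly contains it. I first show that $\Cc(\lam)$ is a finite polygon: non-periodicity of $m$ together with Lemma~\ref{l:prim-refixed} precludes $\Cc(\lam)$ from being a periodic Fatou gap (which would make a major, hence $m = \si_d(\text{major})$, periodic), and Lemma~\ref{triangles:wander} precludes wandering behavior. By the $d$-fold rotational symmetry of unicritical laminations and because the $d$ majors $M_1,\dots,M_d$ are among the edges of $\Cc(\lam)$, the critical gap is a $dk$-gon for some $k \ge 2$ with all $2d$ vertices of $D(m)$ among its vertices.

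I then bifurcate based on whether the $d$ short (non-major) edges of $D(m)$ are leaves of $\lam$. If they are not, then $\Cc(\lam) \supsetneq D(m)$ is a $dn$-gon with $n = k \ge 3$, and setting $\lam' = \lam$ realizes Case~(1). If the short edges are leaves of $\lam$, then $\Cc(\lam) = D(m)$; I further distinguish whether leaves of $\lam$ accumulate on each short edge from outside $D(m)$. If they do, $\lam' = \lam$ again satisfies Case~(1), now with $H' = m$. Otherwise --- the situation giving Case~(2) --- $d$-fold symmetry forces each short edge to be shared with an adjacent gap $A_i$ of $\lam$, and I define $\lam'$ by deleting from $\lam$ the full $\si_d$-backward orbit of the $d$ short edges.

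To check that the construction in Case~(2) yields a sibling $\si_d$-invariant lamination, I note that the removed set is backward-closed, and non-periodicity of $m$ guarantees that no surviving leaf has an iterate landing on a short edge; hence forward invariance and the sibling condition are preserved, with the majors providing the required $d$-sibling family mapping to $m$. Closedness of $\lam'$ holds precisely because we are in the sub-case where no leaves of $\lam$ approach the short edges from outside. The merged critical gap is $\Cc(\lam') = D(m) \cup A_1 \cup \cdots \cup A_d$, a $d$-fold symmetric polygon with $d|A_1|$ vertices; since a finite critical gap of $\lam$ precludes any Fatou gaps in $\lam$ (every Fatou gap would eventually meet a periodic Fatou cycle, which must contain a critical Fatou gap), each $A_i$ is a finite polygon with $|A_i| \ge 3$, so $n = |A_1| \ge 3$. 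Pulling back along the backward orbit of $\Cc(\lam')$ completes the description, and $\lam$ is recovered from $\lam'$ by reinserting the $d$ short sibling edges.

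The main obstacle is the careful verification in the last construction that $\lam'$ is closed and sibling invariant, which requires showing that no additional leaves need to be removed and that the sibling-pullback structure is intact; the non-periodicity of $m$ is essential throughout, both to preserve sibling invariance (no leaf orbit accidentally enters the backward orbit of the short edges) and to rule out degenerate bigon-type configurations for $A_i$ that would destroy the finiteness of the merger.
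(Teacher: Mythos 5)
The overall architecture of your proof mirrors the paper's: show $\Cc(\lam)$ is finite (using non-periodicity of $m$ to rule out a periodic Fatou critical gap and Lemma~\ref{triangles:wander} to rule out wandering), then split on whether $\Cc(\lam)=D(m)$ (a $2d$-gon) or $\Cc(\lam)\supsetneq D(m)$ (a $dn$-gon with $n\ge 3$), and in the $2d$-gon case either conclude $\lam'=\lam$ or delete the backward orbit of the $d$ short edges to enlarge the critical gap. This matches the paper's bifurcation exactly.

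However, there is a genuine gap: you never verify the clause of the lemma asserting that $H'=\si_d(\Cc(\lam'))$ \emph{is approached by leaves of $\lam'$ from all sides}. In the sub-case $\Cc(\lam)\supsetneq D(m)$ you simply declare ``setting $\lam'=\lam$ realizes Case~(1),'' but Case~(1) is only the tag $\lam'=\lam$; the accumulation property is a separate requirement and is precisely the technical heart of the paper's proof. The paper argues by contradiction: an isolated edge $\ell$ of $H$ would produce an adjacent finite non-critical gap $G$; the gaps $G$ and $H$ are shown to be preperiodic and never precritical, and iterating forward yields two adjacent periodic finite gaps $\widehat G$ and $\widehat H$ sharing a leaf, whereupon the transitivity of the return map (Lemma~\ref{transitivity}) forces an infinite chain of pairwise-distinct images of $\widehat G,\widehat H$, which is absurd. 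None of this appears in your write-up. The same omission recurs in your Case~(2): after constructing the enlarged $dn$-gon $\Cc(\lam')$ ($n\ge 3$), you again need the accumulation claim for $\lam'$, and the paper obtains it by re-invoking the $n\ge 3$ argument, which you have not established. A secondary weakness is in the $\Cc(\lam)=D(m)$, accumulation-holds sub-case: your hypothesis only gives accumulation on the short edges from outside $D(m)$, hence (by pushing forward) accumulation on $m$ from the $H(m)$ side; the statement requires accumulation from both sides of $H'=m$, and the side of $m$ facing the critical gap is not addressed.

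In short, the bifurcation and the surgery are right, but without the ``no isolated edge of $H'$'' argument the proof does not establish the conclusion of the lemma.
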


\begin{proof}
We claim that the critical gap $\Cc(\lam)$ of $\lam$ is finite. Indeed,
if $\Cc(\lam)$ is infinite then it is periodic of period, say, $N$;
thus, $\Cc(\lam)$ is an infinite gap that maps onto itself $d$-to-$1$
under $\si_d^N$. Yet then by Lemma \ref{l:prim-refixed} the associated
minor must be periodic, a contradiction. Hence $\Cc(\lam)$ is a finite
gap with $dn$ edges, and $n\ge 2$ ($n=1$ would imply that the minor $m$
is degenerate, a contradiction).

Consider several cases. Suppose than $n\ge 3$ so that
$\si_d(\Cc(\lam))=H$ is a gap (not a leaf). We claim that then each
edge of $H$ is a limit edge of leaves of $\lam$. Indeed, if $\ell$ is
an edge of $H$ isolated in $\lam$, then there is a gap $G$ that shares
$\ell$ with $H$. Since $\Cc(\lam)$ is a finite gap, then $G$ cannot be
infinite because any infinite gap must have an image on which $\si_d$
is not 1-to-1 while the only gap of $\lam$ on which $\si_d$ is not
1-to-1 is a finite gap $\Cc(\lam)$. Thus, $G$ is finite.

Suppose that $G$ at some point maps onto $\Cc(\lam)$. Then on the next
step $G$ maps to $H$; we may assume that $\si_d^N(G)=H$ for some $N$.
it follows that $G$ is a $dn$-gon while $H$ is an $n$-gon. Therefore
$\si_d^N$ cannot act on $G$ and $H$ ``swapping'' them (in other words,
$\si_d^N$ cannot be ``flipping'' $\ell$). Properties of laminations
imply that $\si_d^N$ cannot map $\ell$ to itself as identity either.
So, $\si_d^N$ maps $\ell$ to an edge $\ell'\ne \ell$ of $H$. Properties
of laminations imply now that $H$ maps under $\si_d^N$ to a gap
adjacent to $H$ along $\ell'$, etc. Thus, iterating $\si_d^N$ and using
properties of invariant laminations we will find an infinite
concatenation of images of $H$ which is clearly impossible (e.g.,
because $H$ is (pre)periodic by Lemma \ref{triangles:wander}).

So, $G$ is not precritical; hence, by Lemma \ref{triangles:wander} $G$
is preperiodic and never maps to $H$. The same can be said about $H$,
but for an even more trivial reason: $H$ has $n$ edges and $\Cc(\lam)$
had $dn>n$ edges, hence $H$ is not precritical and by Lemma
\ref{triangles:wander} $H$ is (pre)periodic. As we apply $\si_d$ over
and over to the union $G\cup H$, both gaps $G$ and $H$ stay away from
the critical gap $\Cc(\lam)$ and eventually map onto two distinct
adjacent finite periodic gaps $\widehat G$ and $\widehat H$ that share
a leaf $\hat \ell$. Let us show that this is impossible. Indeed, by
Lemma \ref{transitivity} at every edge of $G'$ there is an image of
$H'$ attached to $G'$. Then we can say the same about each such image
of $H'$, and so on. This yields infinite family of images of $\widehat
G$ or $\widehat H$, a contradiction. So, if $H$ is a gap, then all
edges of $H$ are limit edges in $\lam$. Then we can set $\lam'=\lam$
and case (1) holds.

Otherwise $\Cc(\lam)$ is a $2d$-gon and $\si_d(\Cc(\lam))$ is a leaf
$m$. If $m$ is a limit leaf from both sides, then, again, we can set
$\lam'=\lam$ and case (1) holds. Suppose that $H=m$ is an edge of a gap
$G$ of $\lam$. As before, $G$ cannot be infinite because an infinite
gap must have an image which maps forward no 1-to-1, and the only way
it is possible under the circumstances is when an image of $G$ equals
$\Cc(\lam)$, a contradiction since $\Cc(\lam)$ is a $2d$-gon. This, $G$
is a finite gap.

We claim that $G$ never maps onto $\Cc(\lam)$. Indeed, if it happens
then $m$ must be periodic, a contradiction; the same reason implies
that $G$ never maps onto itself. By Lemma \ref{triangles:wander} $G$ is
preperiodic. There are $d$ immediate preimages of $G$ denoted $G_1,
\dots, G_d$; these are gaps attached to the appropriate edges of
$\Cc(\lam)$ denoted $\bar g_1, \dots, \bar g_d$, resp. Then $\bar g_1,
\dots, \bar g_d$ are isolated sibling leaves of $\lam$ with the same
image $m$. Evidently, if we remove their backward obits from $\lam$ we
will obtain a new \emph{closed} (because $\bar g_i$'s are isolated)
invariant lamination $\lam'\subsetneqq \lam$ in which $G$ remains a gap
(no edge of $G$ belongs to the union of the backward orbits of $\bar
g_i$'s because $G$ is not periodic). By construction, the critical gap
$\Cc(\lam')$ of the new lamination $\lam'$ coincides with the convex
hull of $\bigcup G_i$ or, equivalently, is the union of $\Cc(\lam)$ and
$\bigcup G_i$ from which all leaves $\bar g_i$ are removed.

Now, the just described critical set $\Cc(\lam')$ of the new lamination
$\lam'$ is a finite gap with $dk$ edges and $k\ge 3$. Hence by the
above proven, all edges of $G=\si_d(\Cc(\lam'))$ are limit edges of
$\lam'$ as claimed.
\end{proof}

Finally, we can prove that $\uml_d$ is a q-lamination.

\begin{thm}\label{t:umlq}
The lamination $\uml_d$ is a q-lamination.
\end{thm}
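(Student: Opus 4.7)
The plan is to consider $\sim\,=\,\approx_{\uml_d}$, the equivalence relation on $\uc$ generated by $\uml_d$ in the sense of Definition~\ref{relation:lamination}, and verify directly that $\sim$ is a $\sigma_d$-invariant laminational equivalence relation whose induced lamination coincides with $\uml_d$. The starting point is the gap classification developed in Section~3: by Lemmas~\ref{polygonal:gaps}, \ref{countable:gaps}, \ref{l:molecule}, and \ref{l:ndnp-minor}, every gap of $\uml_d$ is either a \emph{finite polygonal gap} $\sigma_d(\Cc(\lam))$, where $\Cc(\lam)$ is a $dn$-gon with $n\ge 3$, or a \emph{Cantor copy} $\umc_U$ of the Unicritical Main Cardioid, coming from a periodic Fatou critical gap. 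Correspondingly each non-degenerate leaf of $\uml_d$ falls into one of three cases: (a) a periodic minor that is the shared edge of two Cantor copies of $\umc$; (b) an edge of a finite polygonal gap; or (c) a non-periodic, non-degenerate \emph{limit-leaf} minor coming from a $2d$-gon critical set (case~(1) of Lemma~\ref{l:ndnp-minor}), approached from both sides by other minors and not lying on the boundary of any gap.

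The heart of the proof is the description of $\sim$-classes: each class is either a singleton, a two-point set consisting of the endpoints of a leaf of type~(a) or type~(c), or the finite vertex set of a finite polygonal gap of type~(b). The delicate step is to rule out longer chains of leaves. Two inputs combine to do this. First, Theorem~\ref{periodicpt:2possibilities} says that a periodic point is the endpoint of at most one non-degenerate minor, so no chain extends past a periodic endpoint of a type~(a) leaf. Second, within each Cantor copy $\umc_U$ the endpoints of every edge are non-isolated in the basis from the interior side (as established in the proof of Lemma~\ref{countable:gaps}), so each such endpoint is an endpoint of only that one edge within $\umc_U$. Applied to the edges of a finite polygonal gap $G$ of type~(b), these two facts force the neighbor gap across each edge of $G$ to be a Cantor copy or a limit leaf rather than another polygonal gap, preventing chains from escaping $G$. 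For type~(c) limit leaves the argument is even simpler because both adjacent sides are approached by other minors. In particular every $\sim$-class is finite.

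With this description of classes, the axioms of Definition~\ref{Lam:equivalence} follow directly: the graph of $\sim$ is closed because $\uml_d^+$ is closed and the class structure is closed under limits; convex hulls of distinct classes are disjoint because leaves of $\uml_d$ do not cross; and finiteness was just established. For $\sigma_d$-invariance, forward invariance of the class structure is inherited from the gap classification, since $\sigma_d$ sends a finite polygonal gap to a finite polygonal gap (or to a leaf, in the critical case) and a Cantor copy $\umc_U$ to another Cantor copy via the semiconjugacy of Lemma~\ref{countable:gaps}. The orientation-preserving covering-extension condition is trivial on non-critical classes, where $\sigma_d$ acts as a bijection, and on the unique critical class it follows from the $d$-to-$1$ action of $\sigma_d$ on the vertices of the critical polygon $\Cc(\lam)$. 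Finally, $\uml_d=\mathcal{L}_\sim$ is immediate from the case analysis: every leaf of $\uml_d$ is an edge of the convex hull of some $\sim$-class, and conversely every such edge is already a leaf of $\uml_d$. The main obstacle I anticipate is the no-propagation argument in the class enumeration, particularly excluding adjacency between two finite polygonal gaps across a common leaf; a secondary technical point is giving a precise description of the $\sigma_d$-action on the unique critical $\sim$-class in order to verify the covering-extension condition rigorously.
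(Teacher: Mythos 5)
Your overall strategy matches the paper's: classify the minors into the same three types (periodic minors shared by copies of $\umc$, edges of finite polygonal gaps of the form $\si_d(\Cc(\lam'))$ with $\Cc(\lam')$ a $dn$-gon, and non-periodic limit leaves) using Lemmas~\ref{polygonal:gaps}, \ref{countable:gaps}, \ref{l:molecule}, \ref{l:ndnp-minor}, and then read off the $\approx_{\uml_d}$-classes from the gap structure. This is the same route the paper takes.

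However, the specific argument you propose for the step you yourself flag as the obstacle --- excluding adjacency of two finite polygonal gaps across a common edge --- does not work as stated. You cite Theorem~\ref{periodicpt:2possibilities} and the non-isolation of vertices inside a copy $\umc_U$. Both of these facts concern \emph{periodic} points and periodic minors. The vertices of a finite polygonal gap $\si_d(\Cc(\lam'))$ of type~(b) are strictly preperiodic (not periodic): the critical gap $\Cc(\lam')$ has $dn$ vertices while its image has $n$, so it cannot be periodic, and its vertices are eventual preimages of a periodic orbit but not themselves periodic. Hence neither Theorem~\ref{periodicpt:2possibilities} nor the $\umc_U$ non-isolation property applies to them, and your "two inputs" do not rule out two polygonal gaps sharing an edge. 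The correct way to close this gap is already contained in Lemma~\ref{l:ndnp-minor} (which you cite but do not use for this step): case~(2) of that lemma says explicitly that $\si_d(\Cc(\lam'))$ is approached by leaves of $\lam'$ \emph{from all sides}, and then Lemma~\ref{l:partialorder}(1) upgrades those approaching leaves to minors. Thus each edge of the finite gap is a limit from the outside of a sequence of minors, which directly forbids any gap of $\uml_d$ --- polygonal or otherwise --- from being adjacent across that edge. Replacing your periodic-point argument with this appeal to Lemma~\ref{l:ndnp-minor} together with Lemma~\ref{l:partialorder}(1) would fix the proof and bring it into line with the paper's.
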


\begin{proof}
Consider a minor $m=m(\lam)\in \uml_d$. If it is periodic then by Lemma
\ref{l:molecule} it is isolated in $\uml_d$ and is disjoint from all
other minors. Assume that endpoints of $m$ are non-periodic. By Lemma
\ref{l:ndnp-minor} there are two possibilities. First, $m$ is the limit
leaf of leaves of $\lam(m)$ from both sides. Then by Lemma \ref{l:root}
and Lemma \ref{l:partialorder}(1) $m$ is the limit of minors from both
sides. Finally, assume that $m=m(\lam)$ is not the limit of leaves of
$\lam$ from one side. Then by Lemma \ref{l:ndnp-minor} there exists a
lamination $\lam'$ with a critical gap $\Cc(\lam')$ with $dn, n\ge 3$
edges such that $m$ is an edge of the gap $\si_d(\Cc(\lam'))$ which is
approached by leaves of $\lam'$ from the outside of
$\si_d(\Cc(\lam'))$. By Lemma \ref{l:partialorder}(1) $m$ is a minor,
approached by other minors from the outside of $\si_d(\Cc(\lam'))$.
Thus, any non-periodic minor is either itself approximated from all
sides by other minors, or is an edge of a finite gap whose all edges
are approached ny minors from the outside of  $\si_d(\Cc(\lam'))$.
Evidently, all this implies that $\uml_d$ is a \emph{q-lamination}.
\end{proof}

Lemma \ref{l:fc} is a $\uml_d$-version of the density of hyperbolicity.


\begin{lem}[Fatou conjecture]\label{l:fc}
Copies of $\umc$ are dense in $\uml_d$.
\end{lem}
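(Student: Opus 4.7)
The plan is to show that periodic minors are dense in $\uml_d$; this suffices because every periodic minor lies on the boundary of a copy of $\umc$. Specifically, by Lemma~\ref{l:molecule} every non-degenerate periodic minor lies on the boundary of two copies of $\umc$; by Lemma~\ref{nontrivial:object}(3) combined with Lemma~\ref{countable:gaps}, every degenerate periodic minor $x$ of period larger than one lies on the boundary of a copy $\umc_U$ associated with the critical Fatou gap $U$ produced from $\lam_x$; and degenerate fixed minors lie on $\bd(\umc)$ by Lemma~\ref{l:lineup}. Thus density of periodic minors translates immediately into density of copies of $\umc$.

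Fix $m\in \uml_d$ and assume $m$ is not periodic. First suppose $m$ is a non-degenerate leaf. By the case analysis in the proof of Theorem~\ref{t:umlq}, either $m$ is a two-sided limit of leaves of $\lam(m)$, or $m$ is an edge of a finite gap $\si_d(\Cc(\lam'))$ of an auxiliary sublamination $\lam'\subset \lam(m)$ (provided by Lemma~\ref{l:ndnp-minor}) whose critical set has $dn$ edges with $n\ge 3$; in the latter case $m$ is approached by leaves of $\lam'$ on the $\succ$-outer side of $\si_d(\Cc(\lam'))$. In both cases one obtains a sequence of leaves $\ell_i\in \lam(m)$ with $\ell_i\succ m$ and $\ell_i\to m$ (in the second case using Lemma~\ref{l:partialorder}(2) to view approaching minors as leaves of $\lam(m)$). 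Applying Lemma~\ref{l:root} yields a sequence of periodic minors $n_j\to m$ with $n_j\succ m$, as required.

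It remains to treat a non-periodic degenerate minor $x\in \uc$. If $x$ is an irrational prime minor in the sense of Definition~\ref{d:irrr}, then $x\in \bd(\umc)$ by Lemma~\ref{l:lineup}, so $\umc$ itself meets every neighborhood of $x$. Otherwise I would argue that $x$ is approximated by non-degenerate minors: vary the critical chord $\ell$ of length $\tfrac{1}{d}$ with $\si_d(\ell)$ near $x$ and invoke Lemma~\ref{l:part1} to produce prime minors $m(\ell)$ close to $x$. By Lemma~\ref{prime:incomparable} and continuity of the associated rotation-number parametrization, rational rotation numbers (yielding non-degenerate periodic prime minors that are already edges of copies of $\umc$) are dense in the parameter family, so periodic minors accumulate on $x$. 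Each such periodic minor lies on a copy of $\umc$, finishing the proof.

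The main obstacle I anticipate is the degenerate case in the last paragraph: justifying that non-periodic, non-prime degenerate minors $x$ are actually approximated by prime minors $m(\ell)$ as $\ell$ varies, and ruling out the possibility that $x$ is isolated among the minors. This reduces to a continuity-plus-density argument for the length function $\psi$ and for the rotation numbers associated to critical chords, both of which are standard, but their formal setup within the q-lamination framework of $\uml_d$ requires attention to boundary behavior at fixed points of $\si_d$ (which by Lemma~\ref{l:lineup} are non-isolated in $\bd(\umc)$ from either side).
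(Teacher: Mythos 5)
Your overall strategy — reduce to density of periodic minors, and observe that periodic minors lie on copies of $\umc$ — is precisely the paper's approach, and your first paragraph supplies useful detail (the references to Lemma~\ref{l:molecule}, Lemma~\ref{nontrivial:object}(3), Lemma~\ref{countable:gaps} and Lemma~\ref{l:lineup}) that the paper itself leaves implicit.

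However, the second paragraph contains a genuine error in the case where $m$ is \emph{not} a two-sided limit of leaves of $\lam(m)$. You assert that in that case too one obtains leaves $\ell_i\in\lam(m)$ with $\ell_i\succ m$ approaching $m$. This is exactly backwards. By definition, $\ell\succ m$ means $m\subset\rc(\ell)$, i.e.\ $\ell$ separates $m$ from the center. If $m$ were a limit of such leaves, you would be in the first case. In the second case the paper concludes that $m$ is an edge of a finite gap $G$ of $\uml_d$ with \emph{no edge of $G$ under $m$}; equivalently, $m$ is \emph{not} the leading edge of $G$ and $G$ sits on the center side of $m$. Consequently the leaves (and minors) approaching $m$ come from the $\rc(m)$ side, so they satisfy $m\succ\ell_i$ rather than $\ell_i\succ m$. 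Your invocation of Lemma~\ref{l:partialorder}(2) therefore points the wrong way (it yields $m\in\lam(n_j)$, not $n_j\in\lam(m)$), and the hypothesis of Lemma~\ref{l:root} fails for $m$. The paper instead applies Lemma~\ref{l:root} to the \emph{approaching minors} $n_j$ themselves, choosing each $n_j$ to be a two-sided limit leaf of $\lam(n_j)$, which produces periodic minors near $n_j$ (hence near $m$) located under $m$. As written, your argument does not establish the second case.

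Your third paragraph on non-periodic degenerate minors goes beyond what the paper writes; the paper tacitly restricts to non-degenerate leaves and relies on the fact that degenerate leaves are limits of non-degenerate ones. Your attempted treatment via continuity of the rotation-number parametrization of prime minors is plausible but, as you acknowledge, not completed; nothing in the surrounding lemmas supplies that continuity directly.
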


\begin{proof}
It suffices to prove that periodic minors are dense in $\uml_d$. Let
$m$ be a non-periodic  non-degenerate minor. By Lemma \ref{l:root} if
$m$ is the limit of leaves $\ell_i\succ m $ then $m$ is the limit
of periodic minors $\ell'_i$ such that $\ell'_i\succ m$. Now, if $m$ is
not the limit of leaves $\ell_i\succ m $ with $\ell_i\in \lam(m)$
then $m$ is an edge of a finite gap $G$ in $\uml_d$ and no edge of this
gap is ``under'' $m$. Then $m$ is the limit of minors all of which are
``under'' $m$. Moreover, these minors can be chosen to be limits of
leaves of their respective laminations located so that the minors are
``under'' those leaves; hence, again by Lemma \ref{l:root}, we can find
periodic minors in a small neighborhood of $m$ located ``under'' $m$.
\end{proof}

\section{Constructing $UML_{d}$}\label{Facts}

Let $\mathcal{L}$ be a unicritical $\si_d$-invariant lamination with a
unique critical set $\Cc(\lam)=\mathcal{C}$. If the $\si_d$-orbit of
$\mathcal{C}$ is finite, then $\Cc$ is either periodic or strictly
pre-periodic. In the latter case we call the lamination $\mathcal{L}$ a
\emph{Misiurewicz lamination} and $\mathcal{C}$ a \emph{Misiurewicz
critical set}.

\begin{lem}\label{l:mis-lam}
Let $\mathcal{L}$ be a Misiurewicz lamination with critical set
$\mathcal{C}(\lam)$. Then there exists a q-lamination $\lam'\subset \lam$ with critical set $\Cc(\lam')$ such that all gaps of $\lam'$ are finite, $\Cc(\lam)\subset \Cc(\lam')$, and one of the following holds:

\begin{enumerate}

\item $\Cc(\lam)$ is an all-critical $d$-gon and $\Cc(\lam)\subset \Cc(\lam')$;

\item $\Cc(\lam)$ is a critical $2d$-gon that map onto an edge of $\si_d(\Cc(\lam'))$ and $\Cc(\lam)\subset \Cc(\lam')$;

\item $\Cc(\lam)=\Cc(\lam')$.

\end{enumerate}

In particular, all leaves of $\lam'$ are non-isolated from at least one side.
\end{lem}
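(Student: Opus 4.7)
My approach parallels the proof of Lemma \ref{l:ndnp-minor}, adapted to the Misiurewicz setting. The plan is to obtain $\lam'\subset\lam$ by removing two-sidedly isolated leaves of $\lam$ together with their backward orbits; depending on the structure of $\Cc(\lam)$, either no removal is needed and $\lam'=\lam$, or after removal the critical gap strictly enlarges from $\Cc(\lam)$ to some $\Cc(\lam')\supsetneq \Cc(\lam)$.

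First I would verify that every gap of $\lam$ is finite. Since $\lam$ is Misiurewicz, $\Cc(\lam)$ is finite and strictly pre-periodic, so by \cite{kiw02} any infinite gap of $\lam$ is (pre)periodic; a periodic infinite gap must eventually have an image on which $\si_d$ fails to be one-to-one; since the only such gap is $\Cc(\lam)$ itself and $\Cc(\lam)$ is finite, no infinite gap of $\lam$ can exist. This is exactly the argument used in the proof of Lemma \ref{l:ndnp-minor}.

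The critical gap $\Cc(\lam)$ is a $dn$-gon for some integer $n\geq 1$, so three situations arise. If $n\geq 3$, I would argue as in Lemma \ref{l:ndnp-minor} that every edge of $H=\si_d(\Cc(\lam))$ is a two-sided limit of leaves of $\lam$: an isolated edge of $H$ would be shared with a finite non-precritical gap $G$, whose iteration together with $H$ would produce via Lemma \ref{transitivity} an infinite concatenation of adjacent pre-periodic gaps, a contradiction. Hence $\lam'=\lam$ modulo removing two-sidedly isolated leaves elsewhere, yielding case (3). If $n=2$, then $m=\si_d(\Cc(\lam))$ is a leaf; either $m$ is a two-sided limit (case (3)), or $m$ is isolated from the outside of $\Cc(\lam)$ and, exactly as in Lemma \ref{l:ndnp-minor}(2), we remove the $d$ major edges of $\Cc(\lam)$ together with their backward orbits, so that $\Cc(\lam')$ is the convex hull of $\Cc(\lam)$ together with the $d$ adjacent gaps on the outside of the majors --- this is case (2). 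If $n=1$, then $\Cc(\lam)$ is an all-critical $d$-gon with degenerate image $m\in\uc$; if the $d$ critical edges of $\Cc(\lam)$ are approached by leaves from outside then $\lam'=\lam$ (case (3)), otherwise we remove these critical edges and their backward orbits and $\Cc(\lam')$ is formed by merging $\Cc(\lam)$ with the $d$ adjacent gaps, yielding case (1).

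The ``in particular'' clause is immediate from the construction: every leaf surviving in $\lam'$ is a limit of other leaves from at least one side, since two-sidedly isolated leaves have been systematically removed. The main obstacles I anticipate are threefold: rigorously excluding Siegel-type infinite gaps in the Misiurewicz regime (so the finite-gap conclusion for $\lam'$ actually holds); checking that the removal-and-merging process preserves sibling $\si_d$-invariance, so that $\lam'$ is a genuine $\si_d$-invariant lamination; and confirming that, in the enlargement cases, the newly constructed critical gap $\Cc(\lam')$ itself satisfies the two-sided limit condition along all its outer edges, which should follow from an argument parallel to the $n\geq 3$ case applied to $\Cc(\lam')$ in place of $\Cc(\lam)$.
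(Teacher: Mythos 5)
Your overall strategy matches the paper's: establish that all gaps of $\lam$ are finite, do a case analysis governed by the size of $\Cc(\lam)$, and when $\Cc(\lam)$ is "too small" (an all-critical $d$-gon or a critical $2d$-gon with the image leaf isolated on the outside), enlarge the critical gap by deleting the isolated critical edges together with their backward orbits, exactly as in the proof of Lemma~\ref{l:ndnp-minor}. Your argument that no infinite gap can exist is also the paper's, and is fine.

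The genuine gap is in how you justify the ``in particular'' clause (and, implicitly, the claims about the structure of $\Cc(\lam')$ and the q-lamination property). You assert that ``two-sidedly isolated leaves have been systematically removed,'' but your construction only ever removes leaves in the backward orbit of the critical edges. For this to succeed you need to know \emph{a priori} that there are no other two-sidedly isolated leaves anywhere in $\lam$ — that is, that the only shared edges between adjacent finite gaps of $\lam$ arise from the backward orbit of $\Cc(\lam)$. This is precisely the key intermediate claim the paper proves: \emph{if $G$ and $H$ are finite gaps that never map to $\Cc(\lam)$, then $G$ and $H$ cannot share an edge}; iterating $\si_d$ on $G\cup H$ eventually lands on two adjacent periodic gaps, and then transitivity of the first-return map (Lemma~\ref{transitivity}) forces an infinite concatenation of gaps, a contradiction. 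You in fact use exactly this argument inside your $n\ge 3$ case, but you apply it only to gaps adjacent to $H=\si_d(\Cc(\lam))$ rather than stating it in full generality; without the general version you cannot conclude, in any of your three cases, that there are no ``other'' isolated leaves elsewhere, nor that the enlarged $\Cc(\lam')$ is exactly the union of $\Cc(\lam)$ with the $d$ adjacent gaps and nothing more. Promoting your $n\ge 3$ argument to a standalone claim applying to arbitrary pairs of gaps avoiding $\Cc(\lam)$ would close the gap and align your proof with the paper's.
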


\begin{proof}
Clearly, $\mathcal{C}$ must be finite (if $\mathcal{C}$ is infinite
then $\mathcal{C}$ is periodic but we have assumed that $\mathcal{C}$
is strictly pre-periodic). Also, all gaps of $\mathcal{L}$ are finite.
Indeed, if $G$ is an infinite gap of $\mathcal{L}$, then there exists a
critical forward iterate $U$ of $G$ contradicting the fact that
$\mathcal{C}$ is finite.

If $G$ and $H$ are gaps that never map to $\mathcal{C}$, then $G$ and
$H$ cannot share a common leaf. Indeed, if $G$ and $H$ share a common
leaf $\ell$, we can $\si_d$ over and over to the union $G \cup H$ and
see that $G$ and $H$ stay away from $\mathcal{C}$ and eventually map
onto two distinct adjacent finite periodic gaps $G'$ and $H'$ that
share a leaf $\ell'$. By Lemma \ref{transitivity} at every edge of $G'$
there is an image of $H'$ attached to $G'$, at every edge of these
images of $H'$ there is an image of $G'$, and so on. This yields
infinite family of concatenated images of $G'$ and $H'$, a
contradiction.

Finally, if $G$ is an image of $\mathcal{C}$, then by
Lemma \ref{l:ndnp-minor} only the three cases from Lemma \ref{l:mis-lam} are possible.
\end{proof}

Define an equivalence relation $\approx_\lam$ on $\uc$ by declaring
that $x\sim y, x, y\in \uc$ if a finite concatenation of leaves of
$\mathcal{L}$ joins $x$ and $y$ (see Definition
\ref{relation:lamination}). It is well-known (and easy to see) that
$\approx_\lam$ is a laminational equivalence relation (see the remark
after Definition \ref{relation:lamination}). Let $\psi:\uc\to
\uc/\approx_\lam=J_{\approx_\lam}=J$ be the quotient map that
semiconjugates $\si_3:\uc\to \uc$ and the induced topological
polynomial $f_\lam=f:J\to J$.

If $\mathcal{L}$ is a Misiurewicz lamination with critical set
$\mathcal{C}$ then we set $\Cc'=\psi(\Cc)$ and observe that the
$f$-orbit $O(\mathcal{C}')$ of $\mathcal{C}'$ is finite and contained
in $J$. Moreover, by Lemma \ref{l:mis-lam} all gaps of $\lam$ are
finite and, hence, $J$ is a \emph{dendrite} (i.e. it is locally
connected and contain no subsets homeomorphic to the circle since such
a subset would correspond to an infinite gap of $\lam$). This allows us
to use  a standard interval notation in $J$ and denote by $[a, b]$ the
unique interval with endpoints $a, b\in J$. Let
$\mathcal{H}(\mathcal{L})$ be the smallest connected set containing
$O(\mathcal{C}')$ in $J$. Then $\mathcal{H}(\mathcal{L})$ is a tree
(because $O(\mathcal{C}')$ is finite) $\mathcal{H}(\mathcal{L})$ called
the \emph{Hubbard tree (of $\mathcal{L}$)}.

By Lemmas \ref{nontrivial:object} and \ref{l:mis-lam}, there exists an
invariant finite gap or non-degenerate leaf $G$ of $\lam$. It
corresponds to a fixed point $\psi(G)\in J$. The map $\si_d|_G$ is a
transitive rotation of period $n>1$ (by Lemma \ref{transitivity} this
can fail only if $G$ is a leaf with fixed endpoints, but each hole of
$\Cc$ is of length at most $\frac{1}{d}$ and cannot contain two
$\si_d$-fixed points, a contradiction). This implies that if $I$ is a
short subinterval of the interval $[\Cc', \psi(G)]$ with endpoint
$\psi(G)$ then $I, f(I), \dots, f^{n-1}(I)$ are pairwise disjoint
(except for the common endpoint $\psi(G)$ of them all) and
$f^n(I)\supsetneqq I$ (the latter follows from the fact that locally
$f$ expands because $\si_d$ is locally expanding).  Hence $\psi(G)\in
\Hc(\lam)$.

Clearly, $\Hc(\lam)$ is the union all intervals $T=[f^i(\Cc'),
f^j(\Cc')]$, and there are finitely many of them. If $\Cc'\notin T$
then $f(T)=[f^{i+1}(\Cc'), f^{j+1}(\Cc')]\subset \Hc(\lam)$. If
$\Cc'\in T$ then $f(T)=[f^{i+1}(\Cc'), f(\Cc')]\cup [f(\Cc'),
f^{j+1}(\Cc')]\subset \Hc(\lam)]$. Hence $f(T)\subset T$. To show that
$f(T)=T$, note that $\Hc(\lam)$ is the union of all intervals
$[f^i(\Cc'), \psi(G)]$ (this is because by the previous paragraph
$\psi(G)\in \Hc(\lam)]$). Then each $[f^i(\Cc'), \psi(G)]$ is covered
by the image of $f^{i-1}(\Cc'), \psi(G)]$, and it remains to show that
some interval $[f^k(\Cc'), \psi(G)]$ contains $\Cc'$. Indeed, otherwise
$O(\Cc)$ is contained in the hole of $\Cc$ that contains $G$. However
$G$ is the unique minimal subset of that hole. Hence the periodic
portion of $O(\Cc)$ must in fact coincide with $G$. However all
preimages of $G$ are separated from $G$ by the set $\Cc$ which implies
the desired. We have proven the following lemma.

\begin{lem}\label{l:hub-tree}
We have that $f(\Hc(\lam))=\Hc(\lam)$ and $\psi(G)\in \Hc(\lam)$.
\end{lem}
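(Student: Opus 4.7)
The plan is to deduce the lemma from the structural facts set up immediately before the statement, namely that $J$ is a dendrite (Lemma~\ref{l:mis-lam}), that there exists an invariant finite gap or leaf $G$ on which $\sigma_d$ acts as a transitive rotation of period $n>1$, and that $f$ is locally expanding. I will treat the two assertions in order.

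First I would prove $\psi(G)\in\Hc(\lam)$ by showing that $\psi(G)$ is an internal branch point of the convex hull of $O(\Cc')$. Pick a small arc $I\subset[\Cc',\psi(G)]$ with endpoint $\psi(G)$. Because $\sigma_d|_G$ is a transitive rotation of period $n$, the arcs $I,f(I),\dots,f^{n-1}(I)$ lie on $n$ distinct local branches of $J$ at $\psi(G)$ and hence meet only in $\psi(G)$; because $f$ is locally expanding, $f^n(I)\supsetneq I$. Iterating $f^n$, the star $S_k=\bigcup_{i=0}^{n-1}f^i(f^{nk}(I))$ grows monotonically in $J$, so after finitely many steps one of its arms reaches $\Cc'$; hence $\Cc'$ lies in some arm, while some other iterate $f^j(\Cc')$ sits on a different rotational branch. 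The unique arc in the dendrite $J$ joining $\Cc'$ to $f^j(\Cc')$ must then pass through $\psi(G)$, giving $\psi(G)\in[\Cc',f^j(\Cc')]\subset\Hc(\lam)$.

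Next, once $\psi(G)\in\Hc(\lam)$, I rewrite the Hubbard tree as $\Hc(\lam)=\bigcup_{i\ge 0}[f^i(\Cc'),\psi(G)]$, which is legitimate in a dendrite. For the inclusion $f(\Hc(\lam))\subset\Hc(\lam)$ I would work with the interval description $\Hc(\lam)=\bigcup_{i,j}[f^i(\Cc'),f^j(\Cc')]$ and consider any such $T=[f^i(\Cc'),f^j(\Cc')]$: if $\Cc'\notin T$, then $f$ is monotone on $T$ and $f(T)=[f^{i+1}(\Cc'),f^{j+1}(\Cc')]\subset\Hc(\lam)$; if $\Cc'\in T$, then $f(T)=[f^{i+1}(\Cc'),f(\Cc')]\cup[f(\Cc'),f^{j+1}(\Cc')]\subset\Hc(\lam)$. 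This uses only that $f$ has its unique critical value at $f(\Cc')$ and that $\Hc(\lam)$ is closed under taking subarcs joining orbit points.

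For the reverse inclusion $\Hc(\lam)\subset f(\Hc(\lam))$, I observe that each arc $[f^i(\Cc'),\psi(G)]$ with $i\ge 1$ is automatically $f([f^{i-1}(\Cc'),\psi(G)])$, so the only non-trivial thing to check is that $[\Cc',\psi(G)]\subset f(\Hc(\lam))$, i.e., that some arc $[f^k(\Cc'),\psi(G)]$ contains $\Cc'$. I expect this to be the main obstacle; I would argue it by contradiction. If no such arc contains $\Cc'$, then in the lamination picture the whole forward orbit $O(\Cc)$ of the critical set lies in the single hole of $\Cc$ that contains the rotation gap $G$. Minimality of $G$ inside that hole (as established in Lemma~\ref{nontrivial:object}) forces the periodic part of $O(\Cc)$ to coincide with $G$. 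But then $\Cc$ is a pullback of $G$, and any pullback of $G$ is separated from $G$ by $\Cc$ itself, which gives a contradiction and completes the proof.
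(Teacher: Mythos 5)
Your proof is correct and follows essentially the same route as the paper: establish $\psi(G)\in\Hc(\lam)$ via the rotational branches at $\psi(G)$ together with local expansion, prove $f(\Hc)\subset\Hc$ by splitting each arc $[f^i(\Cc'),f^j(\Cc')]$ at $\Cc'$, and prove $\Hc\subset f(\Hc)$ by showing some $[f^k(\Cc'),\psi(G)]$ contains $\Cc'$, arguing by contradiction from minimality of $G$ and separation by $\Cc$. Your explicit ``growing star'' argument merely fills in what the paper compresses into ``Hence $\psi(G)\in\Hc(\lam)$.''
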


The quadratic version of the next lemma is due to Lavaurs \cite{lav89}.

\begin{thm}\label{Lavaurs:lemma}
If two leaves in $\uml_d$ have the same period and are $\succ$-comparable,
then these two leaves are separated in $\udisk$ by a leaf in $\uml_d$
of lower period.
\end{thm}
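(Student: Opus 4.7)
The plan is to work inside $\lam(m_2)$, which contains $m_1$ as a leaf by Lemma~\ref{l:partialorder}(2); both $m_1$ and $m_2$ are then $\sigma_d^p$-fixed leaves of $\lam(m_2)$. Let $S\subset\cdisk$ denote the closed region bounded by $m_1$, $m_2$ and the two arcs of $\uc$ joining their endpoints. For $p\ge 2$ one has $|m_1|<\tfrac{1}{d+1}$, so the primary major $M_2=\sigma_d^{p-1}(m_2)$ satisfies $|M_2|=(1-|m_2|)/d>\tfrac{1}{d+1}>|m_1|$; thus $M_2\succ m_1$, and $S$ lies outside the central strip $C(\lam(m_2))$.

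First I would produce a $\sigma_d^p$-invariant leaf or finite gap $X\subset S$ of $\lam(m_2)$ separating $m_1$ from $m_2$. Choose leaves $\ell^{(1)},\ell^{(2)}\in\lam(m_2)$ close to $m_1$ and $m_2$ respectively but strictly inside $S$. Near the fixed endpoints of each $m_i$ the map $\sigma_d^p$ is locally expanding, so for $\ell^{(i)}$ close enough to $m_i$ the image $\sigma_d^p(\ell^{(i)})$ lies on the $m_j$-side of $\ell^{(i)}$ (for $j\ne i$); in the terminology of Definition~\ref{d:towards}, $\ell^{(1)}$ and $\ell^{(2)}$ map toward one another under $\sigma_d^p$, and Lemma~\ref{l:fxpt-1} then furnishes $X$. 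Next I would argue that the $\sigma_d$-period of $X$ is a proper divisor $q$ of $p$: if $m_2$ has endpoint-period $p$ so that $\lam(m_2)$ carries a Fatou cycle of period $p$, Lemma~\ref{firstreturn:dfixed} restricts all refixed edges on that cycle to the $\sigma_d$-orbit of $m_2$, and by Corollary~\ref{Corr:CentralStrip} this orbit avoids $S$; if instead $m_2$ has endpoint-period $2p$ with $\lam(m_2)$ carrying a preperiodic critical $2d$-gon, the analogous conclusion follows from Lemma~\ref{transitivity} applied to the finite periodic gaps of $\lam(m_2)$, again combined with Corollary~\ref{Corr:CentralStrip}.

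Finally let $\ell$ equal $X$ itself if $X$ is a leaf, or the edge of $X$ facing the center of $\disk$ otherwise, and if needed replace $\ell$ by the shortest leaf in its $\sigma_d$-orbit; by Lemma~\ref{oscillating} this shortest leaf still lies in the annular region $C(m_2)\setminus C(M_2)$ and hence still separates $m_1$ from $m_2$. Then $\ell$ has $\sigma_d$-period $q<p$, satisfies $m_1\succ\ell\succ m_2$, and meets the four conditions of Lemma~\ref{condition:minor}: (1)--(3) follow from Lemma~\ref{oscillating} together with the minimality of $|\ell|$ in its orbit, and (4) follows from the non-crossing of pullbacks within $\lam(m_2)$. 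Hence $\ell\in\uml_d$ is a minor of period strictly less than $p$ separating $m_1$ from $m_2$. The main obstacle is the second step: dispatching cleanly the case analysis between the attracting and parabolic structures of $\lam(m_2)$ and verifying in each that no $\sigma_d^p$-invariant object of $\lam(m_2)$ can lie strictly between two $\succ$-comparable period-$p$ minors.
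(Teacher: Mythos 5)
Your approach is genuinely different from the paper's and, while the overall strategy is plausible, it contains two gaps that the paper's Hubbard-tree machinery is specifically designed to close.

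The first and more serious gap is in your second step. Lemma~\ref{l:fxpt-1} applied with iterate $\sigma_d^p$ produces a $\sigma_d^p$-invariant gap or leaf $X$ separating $m_1$ from $m_2$, so a priori the $\sigma_d$-period of $X$ only \emph{divides} $p$; nothing in the lemma rules out period exactly $p$. Your argument that period-$p$ leaves of $\lam(m_2)$ are confined to the $\sigma_d$-orbit of $m_2$ via Lemma~\ref{firstreturn:dfixed} only controls refixed \emph{edges of the critical Fatou gap}; it does not control other period-$p$ leaves or finite periodic gaps of $\lam(m_2)$ elsewhere in the disk. Indeed there is no reason in general to exclude further period-$p$ minors lying strictly between two $\succ$-comparable period-$p$ minors, and any such would be a leaf of $\lam(m_2)$ by Lemma~\ref{l:partialorder}(2). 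The paper sidesteps exactly this difficulty by passing to a Misiurewicz q-lamination containing both minors and a preperiodic object $\delta$ inside $H(n)$, using the Hubbard tree $\Hc(\lam)$ to locate a precritical gap $\Cc_{-k}$ between $m$ and $n$ with a controlled depth $k\le p-1$, and then disposing of $k=p-1$ separately (the ``flipping'' case). Only after that does it invoke Lemma~\ref{l:fxpt-1}, for the iterate $\sigma_d^{k+1}$ with $k+1<p$, so the resulting invariant leaf automatically has period strictly less than $p$.

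The second gap is in your final step. You propose to ``replace $\ell$ by the shortest leaf in its $\sigma_d$-orbit'' so as to satisfy condition (3) of Lemma~\ref{condition:minor}, and then assert that this replacement still separates $m_1$ from $m_2$ because by Lemma~\ref{oscillating} it stays in $C(m_2)\setminus C(M_2)$. But $C(m_2)\setminus C(M_2)$ is a large annular region, of which the strip $S$ between $m_1$ and $m_2$ is only a small chamber; the shortest leaf in the orbit can lie in an entirely different part of that annulus, and the separation would be lost. The paper avoids this by a minimality argument instead of a replacement: among all leaves of $\lam$ of minimal period $j<p$ separating $m$ and $n$, it chooses one closest to $m$, and shows it must already be a minor — if some forward image lay ``under'' it, Lemma~\ref{l:fxpt-1} would produce a separating leaf of even lower period or one strictly closer to $m$, contradicting the choice. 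Your argument would need an analogous extremal choice; as written, the verification of conditions (1)--(4) of Lemma~\ref{condition:minor} for a leaf that still lies in $S$ is not secured. Finally, a smaller point: the paper notes at the outset that a period-$p$ minor has endpoints of period $p$, so your case of ``endpoint-period $2p$ with a preperiodic critical $2d$-gon'' does not actually arise.
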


\begin{proof}
Let $m\succ n$ be minors of period $p$; then their endpoints are of period $p$. Choose a preperiodic point
$z\in H(n)$ such that the period of a periodic eventual image of $z$ is
larger than $p$. By Theorem \ref{t:umlq} and Lemma \ref{l:mis-lam}
there exists a Misiurewicz q-lamination $\lam$ and a (possibly
degenerate) preperiodic leaf or gap $\delta$ of $\uml_d$ such that
$z\in \si_d(\Cc(\lam))=\delta$ and $\delta\cap \uc\subset H(n)$. By
Lemma \ref{l:partialorder}(2) $m$ and $n$ are leaves of $\mathcal{L}$.
Set $\Cc(\lam)=\Cc$, $\psi(\mathcal{C}) = \mathcal{C'} $, $\psi(m)=m'$
$\psi(n)=n'$ and $\psi(\delta)=\delta'$; these are points of $\mathcal{H(\mathcal{L})}$. Then
$\mathcal{H(\mathcal{L})}\sm \mathcal{C}'$ has $ld$ components for some
$l\ge 1$.

We claim that $f^k(m')$ and $f^k(n')$ lie in different components of
$\mathcal{H(\mathcal{L})}\sm \mathcal{C}'$ for some $k>0$. Indeed if,
for all $k>0$, $f^k(m')$ and $f^k(n')$ lie in the same component of
$\mathcal{H(\mathcal{L})}\sm \mathcal{C'}$, then $m$ and $n$ are edges
of the same periodic gap of the lamination $\mathcal{L}$. Then, by
Lemma \ref{transitivity}, there exists $i$ such that $\si_d^i(m)=n$
contradicting the fact that a minor cannot map under itself. Hence,
$f^k(m')$ and $f^k(n')$ lie in different components of
$\mathcal{H(\mathcal{L})}\sm \mathcal{C'}$ for the minimal $k>0$. Then,
by continuity of $f$, there exists a gap $\Cc_{-1}$ that separates $m$
and $n$ and is such that $\si_d^k(\Cc_{-k})=\Cc$. Moreover, $k$ is the
least number for which such gap exists. Since $m$ and $n$ are
$p$-periodic, then $k\le p-1$. Consider two cases.

		

(1) Let $k<p-1$. Then $f^{k+1}(\mathcal{C}_{-k}')=\delta'$
and so $\Cc_{-k}$ maps away from $m$; also,
by the properties of minors $m$ maps away from $\Cc_{-k}$. Hence by Lemma \ref{l:fxpt-1} there is a leaf $t$ of $\lam$ that separates $m$ and $\Cc_{-k}$ and maps to itself under $\si_d^{k+1}$. Let us now choose the lest number $j$ such that there exists a leaf $t'$ of period $j$ that separates $m$ and $n$. It follows that $j\le k+1<p$. Moreover, we may assume that $t'$ is chosen among all such leaves as the closest to $m$.
We claim that $t'$ is a minor. Indeed, the only reason why $t'$ may not be a minor is if $\si_d^i(t')$ is ``under'' $t'$ for some $i<j$. However then by Lemma \ref{l:fxpt-1} there exists a leaf $t''$ that separates $m$ and $t'$ and is such that $\si_d^i(t'')=t''$, a contradiction with the choice of $t'$. So, $t'$ is a minor as desired.	


		


(2) Let $k=p-1$. Then $f^{k+1}$ maps $n'$
to itself while mapping $\mathcal{C}_{-k}'$ ``under''
itself. Since $f^{k+1}$ is monotone on $[n', \mathcal{C}_{-k}']$, a small subinterval $I=[y, m']$ of $[\mathcal{C}_{-k}', m']$ ``flips'' under the action of $f^p$ which means that the leaf $m$ flips under the action $\si_d^p$, a contradiction with the fact that the endpoints of $n$ are $f^p$-fixed. Thus, case (2) is impossible.
\end{proof}

Recall that by Lemma \ref{firstreturn:dfixed} a critical $n$-periodic Fatou gap $U$ of a unicritical lamination $\lam$ 
has one $n$-periodic major and otherwise $d-2$ periodic points on its boundary that are not endpoints of any edge of $U$. 
Therefore, the minor $m(\lam)$ of $\lam$ is a unique $n$-periodic non-degenerate edge of $\si_d(U)$ while all other $n$-periodic 
points from $\bd(\si_d(U))$ are disjoint from all edges of $\si_d(U)$. 
In the forthcoming arguments we will use the semiconjugacy $\psi:\bd(\si_d(U))\to \uc$ that collapses all edges of $\si_d(U)$ 
to points and seminconjugates $\si_d^n|_{\si_d(U)}$ and $\si_d$.

\begin{lem}\label{l:fatou}
Let $\lam$ be a unicritical lamination with critical Fatou gap $U$ of period $n$. Then all non-degenerate edges of $\si_d(U)$ are non-degenerate minors.
In particular, periodic minors are either disjoint from $\si_d(U)$< or contained in it and then their periods are multiples of $n$. Moreover, all points of $\bd(\si_d(U))$ that are of period $n$ and are not the endpoints of the minor $m(\lam)$ are degenerate minors.
\end{lem}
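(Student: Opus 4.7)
The plan is to break the lemma into three assertions --- edges of $V:=\si_d(U)$ are minors; the partition of periodic minors by $V$; and the degenerate minor claim --- and handle them in that order.

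First I would prove that every non-degenerate edge $\ell$ of $V$ is a non-degenerate minor, by verifying the four conditions of Lemma~\ref{condition:minor}. Conditions (ii) (non-crossing of $\ell$ with its forward images) and (iv) (non-crossing of the forward orbit of $\ell$ with the $d$ sibling pullbacks of $\ell$ of length $\ge\tfrac{1}{d+1}$) come for free, since all of these chords are leaves of the sibling invariant lamination $\lam$. For condition (i), $|\ell|\le\tfrac{1}{d+1}$, and condition (iii), that no forward image of $\ell$ is shorter than $\ell$, I would use the semiconjugacy $\psi\colon\bd(V)\to\uc$ that conjugates $\si_d^n|_V$ to $\si_d|_{\uc}$ together with the $V$-analogue of Lemma~\ref{firstreturn:dfixed}: $m(\lam)$ is the unique $\si_d^n$-periodic non-degenerate edge of $V$, and every other non-degenerate edge is $\si_d^n$-preperiodic with eventual image $m(\lam)$. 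Combined with Lemma~\ref{oscillating} (confining the forward $\si_d$-orbit of any leaf to $[|m(\lam)|,|M(\lam)|]$ after finitely many steps) and Lemma~\ref{length:function}, this gives $|\ell|\le|m(\lam)|\le\tfrac{1}{d+1}$ and the non-shortening of forward images.

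Next I would derive the partition of periodic minors by $V$. If a periodic minor $m'$ crossed any edge of $V$, then by the first part that edge would itself be a minor, contradicting Theorem~\ref{minors:dontcross}. So $m'$ is either disjoint from the interior of $V$ or contained in $V$. In the latter case Corollary~\ref{Corr:CentralStrip} confines the $\si_d^n$-orbit of $m'$ to $V$, and via $\psi$ the $\si_d^n$-period of $m'$ in $V$ equals the $\si_d$-period of its image on $\uc$, so the full $\si_d$-period of $m'$ is an integer multiple of $n$.

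Finally, for the degenerate minor assertion, let $x\in\bd(V)\cap\uc$ be a point of period $n$ not on $m(\lam)$; by the $V$-analogue of Lemma~\ref{firstreturn:dfixed}, such $x$ is not an endpoint of any edge of $V$. Suppose for contradiction that $x$ is an endpoint of some non-degenerate minor $m'=\ol{xy}$ (Theorem~\ref{periodicpt:2possibilities} provides this dichotomy). If $y$ lies in a hole of $V$, then $m'$ crosses the edge of $V$ closing that hole, which by the first part is a minor, contradicting Theorem~\ref{minors:dontcross}; and $m'$ cannot be an edge of $V$ because $x$ is not on any such edge. Hence $m'\subset V$ with both endpoints in $\bd(V)\cap\uc$, and $m'$ has period exactly $n$ (the period divides $n$ since the endpoints do, and is a multiple of $n$ by the previous part). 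Then $m'\succ m(\lam)$ and both leaves have period $n$, so Theorem~\ref{Lavaurs:lemma} yields a minor $m''$ of period strictly less than $n$ separating them. This $m''$ lies strictly inside $V$ (since no edge of $V$ is $\si_d$-periodic of period less than $n$), contradicting the second part. Hence no such $m'$ exists, so $x$ is a degenerate minor.

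The main obstacle is the first part, specifically conditions (i) and (iii): pinning down that $m(\lam)$ is the longest edge of $V$ and that the forward $\si_d$-orbit of any edge of $V$ never drops below $|m(\lam)|$ requires carefully threading the interplay between the semiconjugacy $\psi$ on the Fatou cycle and the sawtooth length function of Lemma~\ref{length:function}; once that is established, the remaining two parts follow relatively cleanly from the previously established structural results.
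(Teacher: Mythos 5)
Your overall architecture --- prove edges of $V:=\si_d(U)$ are minors, use non-crossing to locate periodic minors relative to $V$, then handle the degenerate case by contradiction --- parallels the paper's, but the first and third steps are carried out differently and the first step has a concrete error.

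For part one, the paper does not route through Lemma~\ref{condition:minor}: it takes $C=\ch(\si_d^{-1}(\ell))$, observes that $C$ together with the forward orbit of $\ell$ is a forward-invariant prelamination, and applies Thurston's pullback construction (Theorem~\ref{prop:sat}). Your plan to verify the four hypotheses of Lemma~\ref{condition:minor} is a legitimate alternative, and your treatment of (i) and (iii) is sound: since non-$m(\lam)$ edges of $U$ lie in holes of the central strip and hence have length $<\tfrac1d-|M(\lam)|$, every edge of $V$ has length $\le|m(\lam)|\le\tfrac{1}{d+1}$, and Lemma~\ref{oscillating} then gives non-shortening. However, your justification for (iv) is wrong. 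The $d$ long pullbacks $M_1,\dots,M_d$ of $\ell$ have length $\frac{1-|\ell|}{d}$, which for $\ell\ne m(\lam)$ (so $|\ell|<|m(\lam)|$) is \emph{strictly greater} than $\frac{1-|m(\lam)|}{d}=|M(\lam)|$, the length of the majors of $\lam$. Since the majors are by definition the longest leaves of $\lam$, the $M_i$ cannot be leaves of $\lam$; they are diagonals of $U$, not edges. So the claim that (iv) ``comes for free, since all of these chords are leaves of the sibling invariant lamination $\lam$'' fails. Condition (iv) is still true --- every forward image of $\ell$ is a leaf of $\lam$ and therefore cannot cross a chord lying in the interior of a gap of $\lam$, and an edge of $U$ never crosses a diagonal of $U$ because the endpoints of an edge bound a hole of $U$ containing no other vertex of $U$ --- but you have to say this; it does not follow from what you wrote. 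Note that this is precisely the non-crossing fact the paper's one-line ``Clearly'' is also leaning on, so the two approaches bottom out at the same geometric observation; yours just mislabels where it comes from.

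For part three, you and the paper genuinely diverge. The paper argues by renormalization: it restricts $\lam(\oy)$ to $V$, pushes forward by the collapsing semiconjugacy $\psi$ to get a $\si_d$-invariant unicritical lamination whose critical value is $\psi(x)$, and then invokes Lemma~\ref{l:fxpt-lami} to forbid any leaf at $\psi(x)$. You instead feed $m(\lam)$ and the hypothetical $m'$ (both period $n$, $\succ$-comparable) to Theorem~\ref{Lavaurs:lemma}, obtain a separating minor $m''$ of lower period, force $m''\subset V$ since no edge of $V$ other than $m(\lam)$ is periodic, and contradict the ``period is a multiple of $n$'' clause from part two. This is a clean alternative and it avoids re-running the semiconjugacy machinery. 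Two small cautions: your $\succ$ is reversed (it should be $m(\lam)\succ m'$, since $V\subset\ol{\mathcal R(m(\lam))}$ and $m'\subset V$; Lavaurs only needs comparability so nothing breaks), and the claim that $m'$ has period \emph{exactly} $n$ deserves a sentence --- the period is a multiple of $n$ by part two, and it divides $n$ because $\si_d^n$ fixes the endpoint $x$ and takes $m'$ to a non-crossing leaf through $x$ which, by Corollary~\ref{Corr:CentralStrip} and the fact that a minor cannot map under itself, must equal $m'$.
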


\begin{proof}
Let $\ell$ be a non-degenerate edge of $\si_d(U)=V$. Consider the convex hull $C$ of $\si_d^{-1}(\ell)$. Clearly, $C$ and its forward orbit (which begins with $\ell$) is a forward invariant lamination. Then Thurston's pullback construction yields a lamination with minor $\ell$ as desired. It follows that if a minor $m'$ is non-disjoint from the interior of $V$ then in fact $m'\subset V$. One may say that the gap $V$ with all the minors contained in it forms a part of $\uml_d$. Since all edges of $V$ are pullbacks of $m(\lam)$, the claim about periodic minors from the lemma easily follows.

Let $x\in \bd(V)$ be a point of period $n$ but not an endpoint of $m(\lam)=m$ ($m$ is a non-degenerate edge of $V$ such that $\si_d^n(m)=m$). Assume, by way of contradiction, that there exists a non-degenerate minor $\oy=m(\lam')$ with an endpoint $x$ (here $\lam'$ is some unicritical lamination). By the previous paragraph $\oy\subset V$. Construct the lamination $\lam(\oy)$ as usual: first construct the critical set which is the convex hull $\Cc(\oy)$ of $\si_d^{-1}(\oy)$ and then pull it back in the compatible with itself way.

Evidently, all pullbacks of $\Cc(\oy)$ as well as images of $\oy$ are contained in the corresponding images and pullbacks of $V$ that are gaps of $\lam$. The same applies to the leaves from the closure of these pullbacks of $\Cc(\oy)$ and images of $\oy$, i.e. to all leaves of $\lam(\oy)$. Thus, the restriction $\lam(\oy)|_V$ of $\lam(\oy)$ onto $V$ satisfies all properties of invariant laminations except that instead of $\si_d$ we need to consider $\si_d^n$. Applying $\psi$ to it, we will obtain a $\si_d$-invariant unicritical lamination with $\psi(x)$ belonging to the image of its critical set. However then by Lemma \ref{l:fxpt-lami} no leaf of this lamination may have an endpoint $\psi(x)$, a contradiction with the existence of the leaf $\psi(\oy)$.
\end{proof}


Now, suppose that for some $k$ all periodic minors of periods at most $k$, non-degenerate and degenerate, are already constructed. Denote their union by $A_k$. Take a component $B$ of $\cdisk\sm A_k$.
Let $z$ be a periodic point of period $k+1$ that belongs to $B$.
By Lemma \ref{nontrivial:object} there is a lamination $\lam$ with a critical Fatou gap $U$ of period $k+1$ such that $z\in \si_d(U)=V$. By Lemma \ref{l:fatou} the entire $V$ is contained in $B$. 
Either $z$ is an endpoint of the minor $m(\lam)$, or $z$ is a degenerate minor located ``under'' $m(\lam)$. All of this can happen more than once inside $B$ and will exhaust all periodic points of period $k+1$ that belong to $B$.


We need to make another observation based upon our study of the Unicritical Main Cardioid $\umc$. By construction, leaves from $\umc$ of periods at most $k$ are included in $A_k$, and no minor intersects the interior of $\umc$. Hence each minor of period $k+1$ contained in $B$ is located between two consecutive $\si_d$-fixed points, and the same applies to any gap similar to the gap $V$ considered above. 
Together with the previous paragraph, this yields an algorithm for constructing $\uml_d$. The quadratic version of this algorithm was proved by Lavaurs in \cite{lav89}.




\section{The Lavaurs Algorithm in the Unicritical Case}
\label{Sec:Lavaurs}

Let us construct leaves of$\uml_d$ that have periodic endpoints; the closure of their collection is $\uml_d$. The map $\sigma_d$ has $d-1$ fixed points on the circle: $0,\frac{1}{d-1},\frac{2}{d-1},\dots,\frac{d-2}{d-1}$. By
Lemma \ref{l:fxpt-lami} they are \emph{degenerate minors}. We place a \emph{dot} on the circle at each of these points.

Now, take points of \emph{period} 2 under $\si_d$. The map $\sigma_d$ has $d^2-1$ points of period $2$ on the circle
given by $\{\frac{k}{d^2-1}|k=0,1,2\dots,d^2-1\}$. Some of these will correspond to \emph{degenerate minors} and some of them will be the endpoints of \emph{non-degenerate minors}. It follows that for any \emph{non-degenerate} minor $\gamma$ of period 2, there exist $d-2$ \emph{degenerate} minors $\delta_i$ such that $\gamma \succ \delta_i$ for every $i$. Moreover, any non-degenerate minor of period $2$ is located between two consecutive fixed points. Take the smallest point of period $2$: $\frac{1}{d^2-1}$, connect it to the point $\frac{d}{d^2-1}$, and place a \emph{dot} at the intermediate points $\frac{2}{d^2-1},\frac{3}{d^2-1},\dots, \frac{d-1}{d^2-1}$ (by Lemma \ref{l:fatou}
they are \emph{degenerate minors}). Continue this way around the circle.

In general, suppose the periodic points  on the circle of period
less than $k$ have been \emph{connected} or \emph{dotted}. Connect those of period $k$ starting with the smallest point $p$ of period $k$.
Let $q_1,q_2,\dots,q_{d-1}$ be the next $d-1$
points \emph{ordered anticlockwise} and belonging to the same component of $\cdisk$ with minors of periods less than $k$ removed as $p$. Connect $p$ to $q_{d-1}$ and place
a \emph{dot} at the intermediate points: $q_1,\dots,q_{d-2}$.
Choose the next
available (not yet \emph{connected} or
\emph{dotted}) point of period $k$ and proceed in the same way;
continue similarly around the circle. We do this for periodic points on the circle in the order of increasing periods.

\begin{figure}[H]
\centering
\includegraphics[width=0.45\textwidth]{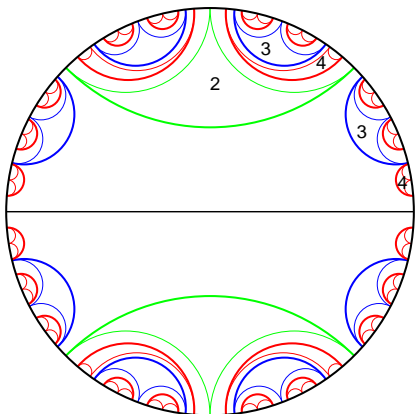}
\caption{{\textit{Construction of the periodic minors of the Unicritical
Minor Lamination $\uml_3$ up to period $4$; the gaps for different periods are coded by colors, and some periods are labeled.}}}
\label{Lavaurs:sigma3}
\end{figure}



\end{document}